\date{}
\newcommand{\de}{\delta}
\def\De{\Delta}
\newcommand{\z}{\zeta}
\newcommand{\ep}{\epsilon}
\newcommand{\ga}{\gamma}
\newcommand{\Om}{\Omega}
\newcommand{\om}{\omega}
\newcommand{\tht}{\theta}
\newcommand{\al}{\alpha}
\newcommand{\be}{\beta}
\newcommand{\si}{\sigma}
\newcommand{\ld}{\lambda}
\newcommand{\Ld}{\Lambda}
\newcommand{\lan}{\langle}
\newcommand{\ran}{\rangle}
\newcommand{\NN}{\mathbb{N}}
\newcommand{\ZZ}{\mathbb{Z}}
\newcommand{\RR}{\mathbb{R}}
\newcommand{\ba}{\begin{array}}
\newcommand{\ea}{\end{array}}
\newcommand{\beq}{\begin{equation}}
\newcommand{\eeq}{\end{equation}}
\newcommand{\beqq}{\begin{equation*}}
\newcommand{\eeqq}{\end{equation*}}
\newcommand{\bdm}{\begin{displaymath}}
\newcommand{\edm}{\end{displaymath}}
\theoremstyle{definition}
\newtheorem{theorem}{Theorem}[section]
\newtheorem{definition}{Definition}[section]
\newtheorem{lemma}{Lemma}[section]
\newtheorem{corollary}{Corollary}[section]
\newtheorem{proposition}{Proposition}[section]
\newtheorem{remark}{Remark}[section]
\numberwithin{equation}{section}
\begin{document}

\pagestyle{plain}

\begin{center}
{\large \bf Multibump solutions of a class of second-order discrete Hamiltonian systems}
\\ [0.3in]

XU ZHANG \footnote{ Email addresses:\ \ xuzhang08@gmail.com.}\

 \vspace{0.15in}
{\it  Department of Mathematics, Shandong University \\
Jinan, Shandong 250100, P.~R. China}

\end{center}


\baselineskip=18pt

{\noindent\bf Abstract.}\ For a class of second-order discrete Hamiltonian systems $\De^2x(t-1)-L(t)x(t)+V'_x(t,x(t))=0$, we investigate the existence of homoclinic orbits by applying variational method, where $L$ and $V(\cdot,x)$ are periodic functions. Further, we show that there exist either uncountable many homoclinic orbits or multibump solutions under certain conditions.

{\noindent\bf Keywords.}\ Discrete Hamiltonian system; homoclinic orbit; multibump solution; variational method.

{\noindent 2010 AMS Mathematics Subject Classification codes.} 37J45; 39A99.

\section{Introduction}

After Poincar\'{e}, Birkhoff, and Smale's work on celecical mechanics and general dynamical systems theory, we know that if a system has a transversal
homoclinic orbit, then it has complicated dynamical behavior and it is chaotic in the sense of Li-Yorke or Devaney.
So, the research of the existence of homoclinic orbits plays a very important role in the understanding of the complicated dynamical behavior,
For Hamiltonian systems, thanks to the variational structure, the variational methods have contributed greatly to the investigation of the existence of homoclinic orbits.  Please refer to \cite{Rabinowitz, Str} for more interesting materials about variational approaches. Even though, it is still difficult to verify the transversal condition of the stable and unstable manifolds, that is, it is hard to show that the homolcinic orbit is transversal. A kind of relatively weaker solution ``multibump solution" is introduced to study the complicated dynamics of Hamiltonian systems.

In the research of the existence of homoclinic orbits of Hamiltonian systems, Rabinowitz obtained the existence of a homoclinic orbit of a type of second-order differential equations, where the homoclinic orbit is the limit of a series of periodic orbits \cite{Rabinowitz1990}. Coti-Zelati and Rabinowitz studied the existence of homoclinic orbits of second-order Hamiltonian systems under the superquadratic potential assumptions at both the origin and at the infinity \cite{ZelatiRabinowitz}. In \cite{Sere}, S\'{e}r\'{e} depicted the complicated dynamical behavior combined with the Bernoulli shift by introducing the multibump solutions.

For the research of the existence of heteroclinic orbits of Hamiltonian systems, Felmer studied the existence of heteroclinic orbits of a class of one-order Hamiltonian system under the periodic assumption of one spatial variable and superlinear in another one \cite{Felm}. Bertotti and Montecchiari studied a class of second-order almost periodic system, showing that there are infinitely many heteroclinic solutions connecting degenerate equilibria \cite{BerMon}. Caldiroli and Jeanjean proved the existence of a heteroclinic orbit connecting the origin and a minimal non-contractible periodic orbit under certain conditions \cite{CalJea}. Coti-Zelati and Rabinowitz investigated the existence of heteroclinic orbits connecting two stationary points at different energy levels and studied the multibump solutions \cite{ZelatiRabinowitzh}.

For the research of dynamical systems, the method of modeling and simulation of a real system plays an important role. Since the continuous systems can not be directly applied in the real computation, we need to transform the continuous equations into their corresponding difference equations such that we could observe the dynamical behavior of the systems through simulations. Hence, the research of difference equations gradually becomes important and useful. Discrete Hamiltonian system can be applied in many different areas, such as physics, chemistry, and so on. Please refer to \cite{AhlPet} for more information on discrete Hamiltonian systems. There are many good results on existence of periodic, homoclinic, and heteroclinic orbits of discrete Hamiltonian systems \cite{GuoYu2003a, GuoYu2003c, LinTang, MaGuo1, MaGuo2, YuBinGuo, ZhangLi}. Deng et al. investigated the existence of homoclinic orbits of a kind of discrete Hamiltonian systems with potential changing sign \cite{DengChen}. Lin and Tang obtained that there exist infinitely many homoclinic orbits of a class of difference equations the equation with general assumptions on the potential function \cite{LinTang}. Ma and Guo studied the existence of homoclinic orbits of a type of difference equations provided that the potential function grows superlinearly both at origin and at infinity or the potential function is odd with respect to the spatial variables \cite{MaGuo1}. Ma and Guo proved the existence of homoclinic orbits of scalar difference equations with under the periodicity assumptions on the potential functions and other assumptions \cite{MaGuo2}.

For the multibump solutions of certain systems, S\'{e}r\'{e}'s firstly investigated multibump solutions of a kind of first-order Hamiltonian system under certain conditions \cite{Sere}. Later, similar construction of the orbits is obtained in different situations. For example, systems in the degenerate case \cite{Rabinowitz1997}, the damped systems \cite{Bess1}, systems with the potential changing sign \cite{CalMon}. However, the construction of multibump solutions in difference equations is just beginning, there is few work about the existence of multibump solutions of homoclinic orbits of difference equations.

The procedure to show the existence of multibump solutions is to first use a variational argument,
minimax method, to find a special family of non-trivial homoclinic orbits
which can be regarded as the ``one-bump" solutions.
Secondly, variational arguments are applied to get multibump solutions, i.e. solutions near sums of sufficiently
separated translates of the ``one-bump" solutions. A key assumption for the existence of multibump solutions is that the critical points of the corresponding
functional are isolated. This hypothesis is used to replace
the classical transversality conditions, which are applied to the study of the complicated dynamical behavior of
dynamical systems.

In 1994, Caldiroli and  Montecchiari  studied the existence of multibump solutions of the  following type of second-order differential equation
\begin{equation}\label{Ch2e1}
\ddot{x}-L(t)x+V'_x(t,x)=0,\ \ t\in\RR,
\end{equation}
where $x\in\RR^n$, $L(t)$ is a periodic,  $n\times n$ real  positive definite matrix, $V\in C^1(\RR\times\RR^n,\RR)$, and $V(t,x)$ has changing sign in $x$ with $V'_x(t,0)\equiv0$. They obtained that equation has either uncountable many homoclinic orbits or multibump solutions  under certain conditions \cite{CalMon}. Inspired by  Caldiroli and  Montecchiari's work, we investigate the existence of multibump solutions of the corresponding difference equation
\begin{equation}\label{Ch2e2}
\De^2x(t-1)-L(t)x(t)+V'_x(t,x(t))=0,\ \ t\in\ZZ,
\end{equation}
where $\De x(t-1)=x(t)-x(t-1)$, $\De^2x(t-1)=\De(\De x(t-1))$, $L(t)$ is a periodic, $n\times n$ positive definite matrix for each $t\in\ZZ$, and $V(t,\cdot)\in C^1(\RR^n,\RR)$ for each $t\in\ZZ$ with $V'_x(t,0)\equiv0$. Our results can be regarded as a discrete analog of Caldiroli and Montecchiari 's results obtained in \cite{CalMon}.

Assume that the second-order difference equations \eqref{Ch2e2} satisfying the following assumptions:
\begin{itemize}
\item [(A1)]  there is a positive integer $T$ such that $L(t+T)=L(t)$ and $V(t+T,\cdot)=V(t,\cdot)$ for any $t\in\ZZ$;
\item [(A2)] $L(t)$ is symmetric, positive definite, for any $t\in\ZZ$;
\item [(A3)] $V(t,0)=0$ and $|V'_{x}(t,x)|=\circ(|x|)$ as $x\to0$;
\item [(A4)] there are two constants $\be>2$ and $0<\al<\be/2-1$ such that
$$\be V(t,x)-\lan V'_x(t,x),x\ran\leq\al\lan x,L(t)x\ran,\ \ \forall (t,x)\in\ZZ\times\RR;$$
\item [(A5)] there exists $(t_0,x_0)\in\ZZ\times\RR^n$ with $x_0\neq0$ such that $V(t_0,x_0)-\lan x_0,L(t_0)x_0\ran/2>0.$
\end{itemize}

The rest is organized as follows. In Section 2, we introduce some basic definitions and lemmas.
In Section 3, the existence of homoclinic orbits of the equations \eqref{Ch2e2} is investigated.
In Section 4, the properties of the Palais-Smale sequences are studied.
In Section 5, we investigate some properties of the functionals by applying the construction of certain vector fields, which are useful in the investigation of the existence of homoclinic orbits.
In Section 6, the main theorem is shown. We show that there are either uncountable many homoclinic orbits of \eqref{Ch2e2} or multibump solutions under certain conditions.
In Section 7, the proof of an important lemma is given.

\section{Preliminaries}

In this section, we introduce some basic notations and results.

For any $u,v\in\RR^n$, denote $\lan u,v\ran$ as the usual inner product. Let
$$l^p:=\bigg\{u=\{u(t)\}^{+\infty}_{t=-\infty}\subset\RR^n:\   \sum_{t\in\ZZ}|u(t)|^p<\infty\bigg\},\ 1\leq p<\infty,$$ and the norm is given by
$$\|u\|_p:=\bigg\{\sum_{t\in\ZZ}|u(t)|^p\bigg\}^{1/p};$$
$$l^{\infty}:=\bigg\{u=\{u(t)\}^{+\infty}_{t=-\infty}\subset\RR^n:\   \sup_{t\in\ZZ}|u(t)|<\infty\bigg\},$$ and the norm is defined by
$$\|u\|_{\infty}:=\sup_{t\in\ZZ}|u(t)|.$$
For convenience, denote
$$\lan u,v\ran_2:=\sum_{t\in\ZZ}\lan u(t),v(t)\ran,\ \ u,v\in l^2.$$

Next, we introduce several concepts about minimax systems and a related result \cite{Sch2008}.

\begin{definition}
 Let $E$ be a Banach space. A map $\varphi: E\to E$ is said to be of class $\Lambda$ if it is a homeomorphism onto $E$, and both $\varphi$ and $\varphi^{-1}$ are bounded on bounded sets.
\end{definition}

For any $A\subset E$, set
$$\Lambda(A):=\{\varphi\in\Lambda:\ \varphi|_A=id\}.$$

\begin{definition}
For a nonempty set $A\subset E$, a nonempty collection $\mathcal{K}$, consisting of sets of $E$, is said to be a minimax system for $A$ if it has the following property:
$$\varphi(K)\in\mathcal{K},\ \forall\varphi\in\Lambda(A),\ \forall K\in\mathcal{K}.$$
\end{definition}

\begin{lemma}\label{Ch2pssequ}
Let $\mathcal{K}$ be a minimax system for a nonempty set $A$ of $E$, and let $J$ be a $C^1$ functional on $E$. Define
$$a:=\inf_{K\in\mathcal{K}}\sup_{u\in K}J(u),$$
and assume that $a$ is finite and satisfies
$$a>a_0:=\sup_{u\in A}J(u).$$
Let $\phi(t)$ be a positive, non-increasing, and locally continuous function on $[0,\infty)$ such that
$$\int^{\infty}_{0}\phi(r)dr=\infty.$$
Then, there is a sequence $\{u_k\}^{\infty}_{k=1}\subset E$ such that
$$\lim_{k\to\infty}J(u_k)=a,\ \ \lim_{k\to\infty}DJ(u_k)/\phi(\|u_k\|)=0.$$
\end{lemma}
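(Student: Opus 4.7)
The plan is to argue by contradiction via a standard deformation along a modified pseudo-gradient flow, with the modification engineered to cope with the weight $\phi$. Suppose the conclusion fails; then one can choose $\delta>0$ and $\epsilon>0$ with $a-3\epsilon>a_0$ such that
\[
\|DJ(u)\|\geq \delta\,\phi(\|u\|) \qquad \text{whenever } |J(u)-a|\leq 3\epsilon.
\]
The target is a deformation $\varphi\in\Lambda(A)$ together with a set $K\in\mathcal{K}$ satisfying $\sup_K J<a+\epsilon$ and $\sup_{\varphi(K)}J\leq a-\epsilon$; since the minimax property of $\mathcal{K}$ then gives $\varphi(K)\in\mathcal{K}$, this contradicts the definition of $a$.

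To build $\varphi$, I would fix a locally Lipschitz pseudo-gradient $W$ for $J$ on $\{DJ\neq 0\}$ (so $\|W\|\leq 2\|DJ\|$ and $\langle DJ,W\rangle\geq \|DJ\|^2$) and a locally Lipschitz cutoff $\chi\colon E\to[0,1]$ equal to $1$ on $\{|J-a|\leq 2\epsilon\}$ and to $0$ outside $\{|J-a|\leq 3\epsilon\}$, and set
\[
V(u):=-\frac{\chi(u)}{\phi(\|u\|)}\,\frac{W(u)}{\|W(u)\|},
\]
extended by $0$ where $\chi=0$. Then $\|V(u)\|\leq 1/\phi(\|u\|)$, which is locally bounded because $\phi$ is positive and continuous, so $V$ is locally bounded and locally Lipschitz. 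Writing $\Psi(r):=\int_0^r\phi(t)\,dt$, an integral curve $\sigma(s,u)$ of $\dot\sigma=V(\sigma)$ satisfies $\frac{d}{ds}\Psi(\|\sigma\|)\leq \phi(\|\sigma\|)\cdot \|V(\sigma)\|\leq 1$, hence $\|\sigma(s,u)\|\leq \Psi^{-1}(\Psi(\|u\|)+s)$; the hypothesis $\Psi(\infty)=\infty$ keeps $\|\sigma\|$ finite on every bounded $s$-interval, so the flow is globally defined. The essential descent estimate is
\[
\frac{d}{ds}J(\sigma(s))\leq -\chi(\sigma)\,\frac{\|DJ(\sigma)\|}{2\,\phi(\|\sigma\|)}\leq -\chi(\sigma)\,\frac{\delta}{2},
\]
valid wherever $|J(\sigma)-a|\leq 3\epsilon$ thanks to the contradiction hypothesis.

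Choose $K\in\mathcal{K}$ with $\sup_K J<a+\epsilon$ and set $\varphi(u):=\sigma(4\epsilon/\delta,u)$. Since $V\equiv 0$ on $\{J\leq a-3\epsilon\}\supseteq A$, $\varphi|_A=\mathrm{id}$; the local bounds on $V$ together with time-reversal give $\varphi$ and $\varphi^{-1}$ homeomorphisms bounded on bounded sets, so $\varphi\in\Lambda(A)$. For any $u\in K$, monotonicity of $J$ along the flow keeps $J(\sigma(s,u))\leq J(u)<a+\epsilon$, so either $J(\sigma(s,u))$ crosses below $a-2\epsilon$ at some $s\leq 4\epsilon/\delta$ (and stays below by monotonicity), or $\chi\equiv 1$ along the orbit and the descent rate $\delta/2$ forces $J(\varphi(u))\leq J(u)-2\epsilon\leq a-\epsilon$. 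In either case $\sup_{\varphi(K)}J\leq a-\epsilon$, completing the contradiction.

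The main obstacle I anticipate is precisely the handling of $\phi$: because $\phi(\|u\|)$ may vanish at infinity, a naive normalized pseudo-gradient $V=-\chi\,W/\|W\|$ gives a decay rate proportional to $\phi(\|\sigma\|)$ that degenerates along orbits escaping to infinity, and one cannot conclude a uniform drop in $J$. Dividing by $\phi(\|u\|)$ inside $V$ restores a uniform descent rate $\delta/2$ at the price of a potentially unbounded flow speed; the divergence assumption $\int_0^\infty\phi=\infty$ enters exactly to rule out finite-time blow-up through the antiderivative $\Psi$, and balancing these two effects is the whole point of the construction.
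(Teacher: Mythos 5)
The paper does not prove this lemma at all; it is quoted from Schechter's \emph{Minimax systems} paper \cite{Sch2008}, so there is no ``paper's own proof'' to compare against. Your blind proof supplies a correct, self-contained deformation argument, and the one genuinely non-routine ingredient is handled correctly: rescaling the normalized pseudo-gradient by $1/\phi(\|u\|)$ restores a uniform descent rate $\delta/2$ on the annulus $\{|J-a|\le 2\varepsilon\}$, at the cost of a flow speed $\|V\|\le 1/\phi(\|u\|)$ that may be unbounded; passing to the antiderivative $\Psi(r)=\int_0^r\phi$ and using $\Psi(\infty)=\infty$ is exactly the right way to rule out finite-time escape to infinity and simultaneously shows that the time-$T$ map and its inverse are bounded on bounded sets, so $\varphi\in\Lambda(A)$. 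The contradiction setup is also sound: failure of the conclusion gives $\|DJ(u)\|\ge\delta\,\phi(\|u\|)$ on $\{|J-a|\le 3\varepsilon\}$ for some $\delta,\varepsilon>0$ with $a-3\varepsilon>a_0$, which both supplies the descent estimate and guarantees $DJ\neq 0$ on the support of the cutoff $\chi$, so that $V=-\chi\,\phi(\|\cdot\|)^{-1}W/\|W\|$ is locally Lipschitz everywhere (near any point with $|J-a|>3\varepsilon$ one has $V\equiv 0$ on a neighbourhood; near any point with $|J-a|\le 3\varepsilon$ the pseudo-gradient is defined with $\|W\|$ locally bounded below).

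Two small points worth tightening, neither fatal. First, to keep $V$ manifestly locally Lipschitz you should place the outer cutoff level strictly inside $3\varepsilon$, e.g.\ $\chi\equiv 1$ on $\{|J-a|\le 2\varepsilon\}$ and $\chi\equiv 0$ on $\{|J-a|\ge \tfrac{5}{2}\varepsilon\}$, so that $\mathrm{supp}\,\chi$ is contained in the open set $\{|J-a|<3\varepsilon\}\subset\{DJ\neq 0\}$ where the pseudo-gradient is constructed; your statement ``$\chi=0$ outside $\{|J-a|\le 3\varepsilon\}$'' allows the support to touch that boundary. Second, the hypothesis only says $\phi$ is ``locally continuous,'' so to run the flow argument one should either read this as locally Lipschitz (as in Schechter's setting) or mollify $\phi$ first; this is immaterial for the paper's application, which only uses $\phi\equiv 1$.
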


Now, we introduce the concentration-compactness principle \cite[Page 39]{Str}.

\begin{lemma} \label{Ch2cc}
Suppose a sequence $\{\rho_k\}^{\infty}_{k=1}\subset l^{1}(\ZZ,\RR)$ with $\rho_k(t)\geq0$ and $|\rho_k|_1=1$ for each $k\in\NN$. Then there exists a subsequence of $\{\rho_k\}^{\infty}_{k=1}$ such that one of the following properties holds (without loss of generality, denoted by $\{\rho_k\}^{\infty}_{k=1}$ as the subsequence):
\begin{itemize}
\item [(a)] Vanishing: for any positive integer $N$,
$$\lim_{k\to+\infty}\sup_{p\in\ZZ}\sum^{p+N}_{t=p-N}\rho_k(t)=0;$$
\item [(b)] Concentration: for any $k\in\NN$, there is $p_k\in\ZZ$ such that for any $\ep>0$, there is a positive integer $N$ such that
$$\sum^{p_k+N}_{t=p_k-N}\rho_k(t)\geq1-\ep;$$
\item [(c)] Dichotomy: there exist $\{p_k\}^{\infty}_{k=1}\subset\ZZ$, $\{N^1_k\}^{\infty}_{k=1},\{N^2_k\}^{\infty}_{k=1}\subset\NN$, and $\eta\in(0,1)$, such that
 \begin{itemize}
 \item [(i)] $N^1_k\to+\infty$, $N^2_k\to+\infty$, $N^1_k/N^2_k\to0$ as $k\to+\infty$;
 \item [(ii)] $\lim\limits_{k\to+\infty}\sum\limits^{p_k+N^1_k}_{t=p_k-N^1_k}\rho_k(t)=\eta$, $\lim\limits_{k\to+\infty}\sum\limits^{p_k+N^2_k}_{t=p_k-N^2_k}\rho_k(t)=\eta$;
 \item [(iii)] for any $\ep>0$, there is a positive integer $N$ such that for any $k\in\NN$, $\sum\limits^{p_k+N}_{t=p_k-N}\rho_k(t)\geq\eta-\ep$.
\end{itemize}
\end{itemize}
\end{lemma}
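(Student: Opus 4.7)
The plan is to adapt P.-L.~Lions' concentration-function argument to the discrete setting. For each $k,N\in\NN$, introduce the concentration function
$$Q_k(N):=\sup_{p\in\ZZ}\sum_{t=p-N}^{p+N}\rho_k(t).$$
Since $\rho_k\ge 0$ and $\sum_{t\in\ZZ}\rho_k(t)=1$, the map $N\mapsto Q_k(N)$ is non-decreasing on $\NN$, bounded by $1$, and tends to $1$ as $N\to\infty$ (because $\rho_k\in l^1$). A Cantor diagonal extraction then produces a subsequence, still denoted $\{\rho_k\}$, along which $Q_k(N)\to Q(N)$ for every $N\in\NN$, with $Q$ non-decreasing and bounded by $1$. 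Set $\eta:=\lim_{N\to\infty}Q(N)\in[0,1]$; the three alternatives in the statement correspond exactly to $\eta=0$, $\eta=1$, and $0<\eta<1$.

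The easy cases are $\eta=0$ and $\eta=1$. If $\eta=0$ then $Q(N)\equiv 0$, so $Q_k(N)\to 0$ for every fixed $N$, which is vanishing~(a). If $\eta=1$ then for every $\ep>0$ one chooses $N(\ep)$ with $Q(N(\ep))>1-\ep/2$, and for large $k$ there exists $p_k^{\ep}\in\ZZ$ with $\sum_{|t-p_k^{\ep}|\le N(\ep)}\rho_k(t)>1-\ep$. Setting $p_k:=p_k^{1/4}$, any other interval of mass exceeding $3/4$ must intersect the reference one, so $|p_k-p_k^{\ep}|\le N(1/4)+N(\ep)$; enlarging the radius to absorb this shift gives concentration~(b) at the common center $p_k$.

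The main obstacle is the dichotomy case $0<\eta<1$, which requires a two-stage construction. Stage one: using $Q_k(N)\to Q(N)$ and $Q(N)\nearrow\eta$, pick $N_k^1\to\infty$ slowly by a diagonal argument so that $Q_k(N_k^1)\to\eta$, and choose $p_k\in\ZZ$ realizing this supremum up to $o(1)$, giving $\sum_{|t-p_k|\le N_k^1}\rho_k(t)\to\eta$. Item~(iii) is obtained in parallel: for each $\ep>0$ fix $N^{*}(\ep)$ with $Q(N^{*}(\ep))>\eta-\ep/2$, and use an overlap argument — two intervals, one of radius $N_k^1$ carrying mass $\approx\eta$ and one of radius $N^{*}(\ep)$ carrying mass $>\eta-\ep$, must intersect once $\eta+(\eta-\ep)>1$, and for smaller $\eta$ one forces coherence by choosing $p_k$ to realize $Q_k$ at a bounded reference scale — to guarantee the two centers stay within bounded distance. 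Stage two: produce $N_k^2$ with $N_k^1/N_k^2\to 0$ and $\sum_{|t-p_k|\le N_k^2}\rho_k(t)\to\eta$, i.e.\ vanishing mass in the annulus $\{N_k^1<|t-p_k|\le N_k^2\}$. This is the technically heaviest step: the non-decreasing function $R\mapsto\sum_{|t-p_k|\le R}\rho_k(t)$ increases from $\eta+o(1)$ at $R=N_k^1$ to $1$ as $R\to\infty$, so on any sufficiently long range $[N_k^1,L_k N_k^1]$ with $L_k\to\infty$ chosen diagonally a pigeonhole argument yields scales at which the increment is $o(1)$; were no such $N_k^2$ to exist, one could iterate to exhibit arbitrarily many disjoint intervals each of uniformly positive mass, contradicting $\|\rho_k\|_1=1$. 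A concluding diagonal extraction assembles (i), (ii), and (iii) into a single subsequence.
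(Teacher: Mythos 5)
The overall framework — introducing the concentration function $Q_k(N):=\sup_{p\in\ZZ}\sum_{|t-p|\le N}\rho_k(t)$, extracting a subsequence along which $Q_k\to Q$ pointwise, and classifying the three alternatives by $\eta:=\lim_{N\to\infty}Q(N)$ — is exactly the Lions/Struwe argument, and the paper itself offers no proof of this lemma (it is quoted from \cite{Str}), so the method is the expected one. The vanishing and concentration cases are treated correctly: for $\eta=1$ the overlap argument is decisive because two disjoint intervals each of mass $>3/4$ cannot coexist when the total is $1$. The dichotomy case, however, has two genuine gaps, and it is precisely here that the lemma is nontrivial.

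First, your ``Stage two'' construction of $N_k^2$ by pigeonholing the increments of $R\mapsto\sum_{|t-p_k|\le R}\rho_k(t)$ does not work: pigeonhole only locates a sub-interval $[R,2R]$ with small increment, whereas (ii) requires the \emph{cumulative} mass of the whole annulus $N_k^1<|t-p_k|\le N_k^2$ to be $o(1)$, and the iteration you invoke produces nested, not disjoint, annuli. The correct mechanism is the one you already set up but do not use: since $Q_k(M)\to Q(M)\le\eta$ for every fixed $M$, a diagonal choice gives $N_k^2\to\infty$ slowly with $Q_k(N_k^2)<\eta+o(1)$, whence $\sum_{|t-p_k|\le N_k^2}\rho_k(t)\le Q_k(N_k^2)\to\eta$, and squeezing against the lower bound at scale $N_k^1$ yields (ii). Second, item (iii) demands a \emph{single} $p_k$ that works at every fixed window size. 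Your overlap argument (``must intersect once $\eta+(\eta-\ep)>1$'') is valid only when $\eta>1/2$, and the fix you sketch for smaller $\eta$ — ``choosing $p_k$ to realize $Q_k$ at a bounded reference scale'' — is both unargued and not automatically compatible with the earlier choice of $p_k$ as a maximizer of $Q_k(N_k^1)$. Indeed a maximizer of $Q_k(N_k^1)$ can violate (iii): for $\rho_k$ with mass $1/3-1/k$ at $0$, $1/k$ at $k$, and $1/3$ at each of $3^k$ and $9^k$ one has $\eta=1/3$ and $Q_k(k)=1/3$ is attained at the midpoint of $[0,k]$, around which every fixed window carries only $o(1)$ mass; (iii) holds only if $p_k$ is placed at one of the heavy atoms. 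Making the choice of $p_k$ coherent across scales for $\eta\le 1/2$ is the missing core of the dichotomy argument, and it is not supplied by the overlap heuristic.
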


\begin{lemma}\label{Ch2comp} (\cite{ZhangShi}, Lemma 2.4)
Let $p, q\in[1,+\infty]$ and $K$ be a bounded set of $l^{p}$. If for any $\ep>0$, there exists $N>0$ such that for any $u\in K$, one has $\sum_{|t|>N}|u(t)|^q<\ep$ in the case that $q<+\infty$, and $\sup_{|t|>N}|u(t)|<\ep$ in the case that $q=+\infty$, then $K$ is relatively compact in $l^{q}$.
\end{lemma}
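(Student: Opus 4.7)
The plan is to use a diagonal extraction argument combined with the uniform tail control, in the spirit of the discrete Fr\'echet--Kolmogorov (or Kolmogorov--Riesz) compactness theorem. First, I would observe that boundedness of $K$ in $l^p$ yields a uniform pointwise bound: if $\|u\|_p\le M$ for every $u\in K$, then $|u(t)|\le M$ for every $t\in\ZZ$ and every $u\in K$ (trivial for $p=\infty$, and from $|u(t)|^p\le\|u\|_p^p$ for $p<\infty$). Given any sequence $\{u_k\}\subset K$, the values $\{u_k(t)\}_k$ are then bounded in $\RR^n$ for each fixed $t$. Since $\ZZ$ is countable, a standard Cantor diagonal extraction produces a subsequence (still denoted $\{u_k\}$) and a function $u:\ZZ\to\RR^n$ with $u_k(t)\to u(t)$ for every $t\in\ZZ$.

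Next, I would upgrade the pointwise convergence to $l^q$-convergence by splitting each norm into a finite-window part and a tail part. Fix $\ep>0$ and use the hypothesis to choose $N$ so that the tail estimate holds for every $u\in K$. In the case $q<\infty$, Fatou's lemma applied to the pointwise limit gives $\sum_{|t|>N}|u(t)|^q\le\ep$, which already shows $u\in l^q$; together with the elementary inequality $|a-b|^q\le 2^{q-1}(|a|^q+|b|^q)$ this bounds the tail of $\|u_k-u\|_q^q$ by $2^q\ep$. The window sum $\sum_{|t|\le N}|u_k(t)-u(t)|^q$ is a finite sum of terms each tending to zero, so it vanishes as $k\to\infty$. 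Hence $\limsup_k\|u_k-u\|_q^q\le 2^q\ep$, and since $\ep$ was arbitrary, $u_k\to u$ in $l^q$. In the case $q=\infty$, the tail contribution to $\|u_k-u\|_\infty$ is controlled by $2\ep$ via the triangle inequality and lower semicontinuity of $\sup$ under pointwise limits, while on the finite window $|t|\le N$ pointwise convergence is uniform convergence, so the same two-part split gives $\|u_k-u\|_\infty\to 0$.

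I do not expect a serious obstacle beyond bookkeeping. The only mildly delicate point is transferring the uniform tail control from the sequence to the pointwise limit $u$ (via Fatou in the summable range, and via the trivial bound $|u(t)|\le\liminf_k|u_k(t)|$ in the $q=\infty$ range); this ensures both $u\in l^q$ and that the ``limit half'' of the tail estimate is available for the two-part split. The $l^p$-boundedness is used only to secure the pointwise $\RR^n$-bound that powers the diagonal extraction; once the pointwise limit is in hand, the result follows by the standard finite-window plus uniform-tail argument.
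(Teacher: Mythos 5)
Your proof is correct and is the standard diagonal-extraction-plus-tail-cutoff argument: the uniform $l^p$-bound supplies the pointwise $\RR^n$-bound that powers the Cantor diagonal, and the uniform tail control (transferred to the pointwise limit via Fatou for $q<\infty$, or via $\liminf$ of $|u_k(t)|$ for $q=\infty$) closes the finite-window/tail split. Note that the paper does not prove this lemma at all — it is quoted verbatim from \cite{ZhangShi}, Lemma 2.4 — so there is no in-paper proof to compare against; your self-contained argument is the natural one and fills that gap correctly.
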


Now, we introduce some basic notions and results about variational principles.

\begin{definition} \cite{Rabinowitz} Let $E$ be a real Banach space.  Given $J\in C^1(E,\mathbf{R})$, a sequence $\{x_m\}\subset E$ is called a Palais-Smale sequence (briefly, PS sequence) for $J$ if it satisfies the condition that $\{J(x_m)\}$ is bounded and $DJ(x_m)\to0$ as $m\to\infty$, where $DJ(x)$ means the Fr\'{e}chet derivative at $x\in E$. Further, we say $J$ satisfies the Palais-Smale condition (briefly, PS condition) if any PS sequence possesses a convergent subsequence.
\end{definition}

Now, we introduce the concept of local mountain pass type \cite{CalMon, Hofer, PucSer}.
\begin{definition}

Given a $C^1$ functional $f$ on a Banach space $E$. Assume $c$ is a real number, $\Om$ is a non-empty open subset of $E$, and
$x,y\in\Om$. The points $x$ and $y$ are called $c$-connectible in $\Om$ if there exists a continuous path $p:[0,1]\to\Om$ joining $x$ and
$y$ such that the range of $p$ is contained in $\Om$ and $\max_{t\in[0,1]}f(p(t))<c$. We call a critical point $v\in E$ for $f$ is said to
be local mountain pass type for $f$ on $\Om$ if $v\in\Om$ and for any neighborhood $U$ of $v$, the set $\{w:\ f(w)<f(v), \ w\in E\}\cap U$ contains
two points which are not $f(v)$-connectible in $\Om$.
\end{definition}

\begin{remark} \label{Ch2e4}
Since any complete convex metric space is path connected \cite[Thm. 14.1, p. 41]{Blum}, the definition above is reasonable in the space $l^2$.
\end{remark}

Now, we introduce a result about the fixed point theory \cite{Mir}.
\begin{lemma} \label{Ch2fix}
Let $\Om=[0,1]^k\subset\RR^k$ and $F:\Om\to\RR^k$ be a continuous function satisfying
for any $i$, $1\leq i\leq k$,
$$F_i(x_1,x_2,...,x_{i-1},0,x_{i+1},...,x_k)\geq0,\ F_i(x_1,x_2,...,x_{i-1},1,x_{i+1},...,x_k)\leq0,$$
then $F(x)=0$ has a solution in $\Om$, where $F=(F_1,...,F_k)$ and $x=(x_1,x_2,...,x_k)\in\RR^k$.
\end{lemma}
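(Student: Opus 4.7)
The statement is Miranda's theorem, classically equivalent to Brouwer's fixed point theorem; the plan is to reduce it to Brouwer via a projection trick. Let $P:\RR^k\to\Om$ be the componentwise orthogonal projection onto $\Om$, i.e.\ $P(y)_i=\min\{1,\max\{0,y_i\}\}$, which is continuous. Define $G:\Om\to\Om$ by $G(x)=P(x+F(x))$; this is a continuous self-map of the compact convex set $\Om$, so Brouwer's fixed point theorem supplies some $x^*\in\Om$ with $G(x^*)=x^*$.

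It then suffices to show $F(x^*)=0$, and I would do this by a coordinatewise case split on the value of $x^*_i$. If $x^*_i\in(0,1)$, then $P$ acts as the identity in the $i$-th coordinate near $x^*+F(x^*)$ in that coordinate, so the fixed-point equation forces $x^*_i+F_i(x^*)=x^*_i$, hence $F_i(x^*)=0$. If $x^*_i=0$, then $P(x^*+F(x^*))_i=0$ means $x^*_i+F_i(x^*)\le 0$, i.e.\ $F_i(x^*)\le 0$; combined with the hypothesis $F_i(x^*)\ge 0$ on the face $\{x_i=0\}$, equality follows. The case $x^*_i=1$ is symmetric and yields again $F_i(x^*)=0$. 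Taking the conjunction over $i\in\{1,\dots,k\}$ gives $F(x^*)=0$.

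The only non-trivial input here is Brouwer's theorem itself; the expected obstacle is purely bookkeeping, namely making sure that the inequality produced by the projection on each face has the sign opposite to the hypothesised inequality on $F_i$, so the two collapse to an equality. That matching is exactly what motivates the choice $x+F(x)$ rather than $x-F(x)$ in the definition of $G$. No smallness of $F$ and no rescaling of the form $x+tF(x)$ is needed, because the projection absorbs any excursion of $x+F(x)$ outside $\Om$, and the hypotheses prevent a fixed point from occurring at a point where $F_i\ne 0$ on a boundary face.
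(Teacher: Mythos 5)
Your proof is correct. Note, however, that the paper does not actually prove this lemma: it is stated as a classical result (the Poincar\'{e}--Miranda theorem) and simply cited from Miranda's 1940 paper, so there is no in-text argument to compare against. What you have supplied is the standard self-contained reduction to Brouwer's fixed point theorem via the retraction $P(x+F(x))$, and the three-way case split on $x^*_i\in\{0\}$, $(0,1)$, $\{1\}$ is handled correctly, including the sign-matching at the two boundary faces. The only thing worth flagging is cosmetic: the reduction you chose runs in the opposite direction from the historically common one (Miranda's original contribution was precisely the observation that his statement is \emph{equivalent} to Brouwer, and one often sees Brouwer derived \emph{from} Miranda via Sperner-type combinatorics), but deriving Miranda from Brouwer as you do is perfectly standard, shorter, and entirely adequate for the role the lemma plays in this paper.
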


\section{Existence of homoclinic orbits}
In this section, we show the existence of homoclinic orbits of difference equations \eqref{Ch2e2} under the assumptions (A1)--(A5).

If (A4) holds, then we have the following inequality, which will be used later,
\begin{equation}\label{Ch2b1}
V(t,sx)\geq\bigg(V(t,x)-\frac{\al}{\be-2}\lan x,L(t)x\ran\bigg)s^{\be}+\frac{\al}{\be-2}s^2\lan x,L(t)x\ran,\ \forall t\in\ZZ,\ s\geq1,\ x\in\RR^n.
\end{equation}
Now, we give the proof of \eqref{Ch2b1}. Consider the function
$$\phi(s)=\frac{V(t,sx)}{s^{\be}}-V(t,x)+\frac{\al}{\be-2}\lan x,L(t)x\ran-s^{2-\be}\frac{\al}{\be-2}\lan x,L(t)x\ran.$$
And, it follows from (A4) that
\begin{equation*}
\begin{split}
\frac{d\phi(s)}{ds}=&\frac{\lan V'(t,sx),x\ran}{s^{\be}}-\be\frac{V(t,sx)}{s^{\be+1}}+\al s^{1-\be}\lan x,L(t)x\ran\\
=&\frac{\lan V'(t,sx),sx\ran-\be V(t,sx)+\al \lan sx,L(t)sx\ran}{s^{1+\be}}\geq0.
\end{split}
\end{equation*}
This, together with the fact $\phi(1)=0$, yields that \eqref{Ch2b1} holds.

For the Hilbert space $l^2$, by (A1) and (A2), there is an equivalent norm on $l^2$ which is defined by the following inner product
$$\lan u, v\ran_*:=\sum_{t\in\ZZ}\lan\De u(t-1),\De v(t-1)\ran+\sum_{t\in\ZZ}\lan u(t),L(t)v(t)\ran,$$
and the corresponding norm is given by
\beq\label{Ch2e8}
\|u\|=\lan u,u\ran_*^{1/2}.
\eeq
So, there is a positive constant $L_1$ such that
\beq\label{Ch2b2}
L^{-1}_1\|u\|_*\leq\|u\|_2\leq L_1\|u\|_*,\ \forall u\in l^2.
\eeq

Given any set $F\subset\ZZ$, denote $F^c:=\{t:\ t\in \ZZ\setminus F\}$. Consider the characteristic function of $F$
\beq\label{Ch2f11}
\chi_F(t)=\left\{
    \begin{array}{ll}
      1, & \mbox{if}\ t\in F; \\
      0, & \mbox{if}\ t\in F^c.
    \end{array}
  \right.
\eeq
Set
$$F_L:=\{t\in F:\ t-1\in F^c\},\  F_R:=\{t\in F:\ t+1\in F^c\},$$
$$F^c_L:=\{t\in F^c:\ t-1\in F\},\ F^c_R:=\{t\in F^c:\ t+1\in F\}.$$
It is evident that $F_R=F^c_L$. So, by direct calculation, one has
{\allowdisplaybreaks\beq\label{Ch2f5}
\begin{split}
&\|\chi_Fu\|^2_*=\sum_{t\in F}\lan u(t),L(t)u(t)\ran+\sum_{t\in F}\lan\De(\chi_F(t-1)u(t-1)),\De(\chi_F(t-1)u(t-1))\ran\\
&+\sum_{t\in F^c}\lan \De(\chi_F(t-1)u(t-1)),\De(\chi_F(t-1)u(t-1))\ran\\
=&\sum_{t\in F}\lan u(t),L(t)u(t)\ran+\bigg(\sum_{t\in F_L}\lan u(t),u(t)\ran+\sum_{t\in F_R\setminus F_L}\lan \De u(t-1),\De u(t-1)\ran\\
+&\sum_{t\in F\setminus(F_L\cup F_R)}\lan \De u(t-1),\De u(t-1)\ran\bigg)+\bigg(\sum_{t\in F_R}\lan u(t),u(t)\ran\bigg).
\end{split}
\eeq}
Please note that $\sum_{t\in F_L\cup F_R}\lan u(t),u(t)\ran\neq\sum_{t\in F_L}\lan u(t),u(t)\ran+\sum_{t\in F_R}\lan u(t),u(t)\ran$ in general.
\medskip

Set
\beq\label{Ch2f20}
L_2:=\inf\{\lan x,L(t)x\ran:\ t\in\ZZ,|x|=1\}.
\eeq

Now, we could define a functional $f:l^2\to\RR$, where
\begin{equation}\label{Ch2c2}
f(u)=\frac{1}{2}\|u\|_*^2-g(u)=\frac{1}{2}\|u\|_*^2-\sum_{t\in\ZZ}V(t,u(t)),\ \ u\in l^2.
\end{equation}

We first show that $g(u)$ is well defined.

By (A3), one has that $V'_x(t,0)\equiv0$ for $t\in\ZZ$. So, there exists $\de>0$ such that for any $|x|\leq\de$,
\begin{equation}\label{Ch2c3}
V(t,x)\leq|x|^2.
\end{equation}
For any given $u\in l^2$, there exists an integer $N>0$ such that $|u(t)|\leq\de$ for any $|t|> N$. This, together with \eqref{Ch2c3}, implies that
\begin{equation}\label{Ch2c4}
g(u)\leq\sum_{|t|\leq N}V(t,u(t))+\sum_{|t|>N}|u(t)|^2<\infty.
\end{equation}
So, $g(u)$ is well defined on $l^2$ if (A3) holds.

For $f\in C^1(l^2,\RR)$, by $D_*f(x)$ denote the Fr\'{e}chet derivative of $f$ at $x$ in $(l^2,\lan\cdot,\cdot\ran_*)$, by
$D_2f(x)$ denote the Fr\'{e}chet derivative of $f$ at $x$ in $(l^2,\lan\cdot,\cdot\ran_2)$. By the methods used in
\cite{ZelatiRabinowitz, ZhangShi}, we could show that $f(u)$ is differentiable.

\begin{lemma}\label{Ch2diff1}
If (A1)-(A3) hold, the functional $f$ is differentiable, and for any $u,v\in l^2$,
\beq \label{Ch2deriva}
 D_*f(u)v=\lan u,v\ran_{*}-\sum_{t\in\ZZ}\lan V'_x(t,u(t)),v(t)\ran;
\eeq
\beq\label{Ch2k43}
 D_2f(u)v=\lan u,v\ran_{*}-\sum_{t\in\ZZ}\lan V'_x(t,u(t)),v(t)\ran.
\eeq
\end{lemma}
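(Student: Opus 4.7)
The plan is to decompose $f = Q - g$ where $Q(u)=\tfrac12\|u\|_*^2$ and $g(u)=\sum_{t\in\ZZ}V(t,u(t))$. The quadratic part $Q$ is obviously Fréchet differentiable on the Hilbert space $(l^2,\lan\cdot,\cdot\ran_*)$ with $DQ(u)v=\lan u,v\ran_*$, so all the real work is to show that $g$ is differentiable at every $u\in l^2$ with derivative $v\mapsto \sum_{t\in\ZZ}\lan V'_x(t,u(t)),v(t)\ran$. Since $\|\cdot\|_*$ and $\|\cdot\|_2$ are equivalent on $l^2$ by \eqref{Ch2b2}, differentiability in one norm is equivalent to differentiability in the other and produces the same linear functional, which will give \eqref{Ch2deriva} and \eqref{Ch2k43} simultaneously; hence I will carry out the estimates using $\|\cdot\|_2$.

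First I would verify that the candidate derivative $\Phi_u(v):=\sum_{t\in\ZZ}\lan V'_x(t,u(t)),v(t)\ran$ is a bounded linear functional on $l^2$. Using (A1) (periodicity in $t$) together with (A3) (so $V'_x(t,0)=0$ and $|V'_x(t,x)|=\circ(|x|)$ uniformly in $t$), pick $\de>0$ with $|V'_x(t,x)|\leq|x|$ whenever $|x|\leq\de$. Since $u\in l^2$ implies $|u(t)|\leq\de$ for $|t|$ large, the tail contributes $\sum_{|t|>N}|V'_x(t,u(t))|^2\leq\sum_{|t|>N}|u(t)|^2<\infty$, while the finitely many remaining indices are handled by the continuity of $V'_x(t,\cdot)$ in $x$. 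Thus $V'_x(\cdot,u(\cdot))\in l^2$ and Cauchy–Schwarz gives $|\Phi_u(v)|\leq\|V'_x(\cdot,u(\cdot))\|_2\|v\|_2$.

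Next I would apply the mean value theorem pointwise: for each $t$ there exists $\tht(t)\in[0,1]$ with
\begin{equation*}
V(t,u(t)+v(t))-V(t,u(t))=\lan V'_x(t,u(t)+\tht(t)v(t)),v(t)\ran.
\end{equation*}
Subtracting $\lan V'_x(t,u(t)),v(t)\ran$, summing, and using Cauchy–Schwarz yields
\begin{equation*}
|g(u+v)-g(u)-\Phi_u(v)|\leq\|v\|_2\bigg(\sum_{t\in\ZZ}|V'_x(t,u(t)+\tht(t)v(t))-V'_x(t,u(t))|^2\bigg)^{1/2}.
\end{equation*}
It remains to show the sum above tends to $0$ as $\|v\|_2\to0$. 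Given $\e>0$, choose $N$ so large that $\sum_{|t|>N}|u(t)|^2<\e$ and $|u(t)|<\de/2$ for $|t|>N$; for $\|v\|_\infty\leq\|v\|_2$ small enough, $|v(t)|<\de/2$ everywhere, so on the tail both $u(t)$ and $u(t)+\tht(t)v(t)$ lie in $\{|x|\leq\de\}$ and (A3) bounds each of the two $V'_x$ terms by the size of its argument, giving a bound $C(|u(t)|^2+|v(t)|^2)$ summable to something small. On the finite head $|t|\leq N$, continuity of $V'_x(t,\cdot)$ and $v(t)\to 0$ force each of the finitely many summands to vanish.

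The main obstacle is the tail estimate: one must combine the $\circ(|x|)$ behavior from (A3) with the $T$-periodicity (A1) to obtain smallness of $|V'_x(t,x)|/|x|$ \emph{uniformly} in $t\in\ZZ$, and simultaneously keep $u(t)+\tht(t)v(t)$ inside the small-$x$ regime while $v$ is small. Once that uniform-in-$t$ bound is in hand, Cauchy–Schwarz and the decomposition into head and tail close the argument, and equivalence of norms transfers the conclusion from $D_2f$ to $D_*f$.
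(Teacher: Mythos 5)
Your argument is correct and is, in substance, the proof the paper has in mind: the paper treats only the quadratic part $\frac{1}{2}\|u\|_*^2$ directly and delegates the differentiability of $g(u)=\sum_{t\in\ZZ}V(t,u(t))$ to ``the method used in the proof of Lemma 3.2 in [ZhangShi]''. Your write-up supplies exactly the content behind that citation: checking $V'_x(\cdot,u(\cdot))\in l^2$ so that $\Phi_u$ is a bounded linear functional, applying the mean value theorem componentwise, and closing the $o(\|v\|_2)$ estimate by a head/tail split in which (A1) is used to upgrade the $o(|x|)$ bound of (A3) to one uniform in $t$; the transfer between $D_2f$ and $D_*f$ via the equivalence of norms \eqref{Ch2b2} also matches what the paper does. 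When you carry out the tail estimate, be explicit that the bound is of the form $C\varepsilon + C\|v\|_2^2$ with $\varepsilon$ chosen first and arbitrary, so the conclusion is a $\limsup$-argument rather than a direct vanishing; you clearly intend this, but it is worth writing down. One more addition worth making: the paper's proof actually asserts $f\in C^1(l^2,\RR)$, and $C^1$-regularity (not mere Fr\'echet differentiability at each point) is what is used downstream, for instance in Lemma \ref{Ch2pssequ} and throughout the Palais--Smale machinery; the same head/tail estimate, applied with $u$ replaced by a sequence $u_m\to u$ in $l^2$ instead of by $u+v$ with $v\to 0$, gives continuity of $u\mapsto D_*f(u)$, and recording that step would complete the lemma as the paper actually uses it.
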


\begin{proof}
By the method used in the proof of Lemma 3.2 in \cite{ZhangShi} and \eqref{Ch2b2}, we could show that $J(u)\in C^1(l^2,\RR)$. It only needs to show that the first part of $f(u)$ is differentiable. By direct calculation, one has
\beqq
\begin{split}
&\frac{1}{2}\|u+v\|_{*}^2-\frac{1}{2}\|u\|_{*}^2=\lan u,v\ran_*+\lan v,v\ran_*=\lan u,v\ran_*+o(\|v\|_*)=\lan u,v\ran_*+o(\|v\|_2).
\end{split}
\eeqq
where \eqref{Ch2b2} is used. Next, it is to show that the derivative operator is continuous, or it is bounded. It follows from Cauthy inequality and \eqref{Ch2b2} that
\beqq
\lan u,v\ran_*\leq\|u\|_*\|v\|_*\leq L^{-1}_1\|u\|_*\|v\|_2.
\eeqq
This completes the proof.
\end{proof}

\begin{lemma}\label{Ch2smal}
For any $u\in l^2$, $g(u)=o(\|u\|_*^2)$ as $u\to0$ in $l^2$.
\end{lemma}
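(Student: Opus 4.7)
The plan is to exploit assumption (A3) together with the periodicity (A1) to obtain a uniform-in-$t$ bound on $|V(t,x)|$ for small $x$, and then to sum it over $t\in\ZZ$ using the $l^2$-summability of $u$. Concretely, the pointwise condition $|V'_x(t,x)|=o(|x|)$ as $x\to 0$, combined with the $T$-periodicity of $V(t,\cdot)$ in $t$, means that only $T$ distinct functions $V(t,\cdot)$ enter the modulus-of-continuity argument, so for every $\e>0$ there exists $\de>0$ such that
$$|V'_x(t,x)|\leq \e |x|\quad\text{for all }t\in\ZZ\text{ and all }|x|\leq \de.$$

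Next, since $V(t,0)=0$ by (A3), the fundamental theorem of calculus gives
$$V(t,x)=\int_0^1 \lan V'_x(t,sx),x\ran\,ds,$$
so the uniform pointwise bound above yields $|V(t,x)|\leq \e |x|^2/2$ whenever $|x|\leq \de$. Now take any $u\in l^2$ with $\|u\|_*$ small enough that $\|u\|_2\leq L_1\|u\|_*\leq\de$, using the equivalence \eqref{Ch2b2}. Because $|u(t)|\leq \|u\|_2\leq \de$ for every $t\in\ZZ$, the pointwise estimate applies at every site and
$$|g(u)|\leq \sum_{t\in\ZZ}|V(t,u(t))|\leq \frac{\e}{2}\sum_{t\in\ZZ}|u(t)|^2=\frac{\e}{2}\|u\|_2^2\leq \frac{\e L^2_1}{2}\|u\|_*^2.$$
Since $\e>0$ is arbitrary, this implies $g(u)/\|u\|_*^2\to 0$ as $\|u\|_*\to 0$, i.e. $g(u)=o(\|u\|_*^2)$.

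There is no substantive obstacle. The only point requiring a brief comment is the upgrade of the $o(|x|)$ condition in (A3) from pointwise in $t$ to uniform in $t$, and this is automatic from (A1) because the moduli $r\mapsto\sup_{0<|x|\leq r}|V'_x(t,x)|/|x|$ depend only on $t\bmod T$, so the maximum over one period is still a function that tends to $0$ as $r\to 0$. The norm equivalence \eqref{Ch2b2} then does the rest of the bookkeeping in passing from the $\|\cdot\|_2$-bound to the $\|\cdot\|_*$-bound.
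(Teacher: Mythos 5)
Your proof is correct and follows essentially the same route as the paper: invoke (A1) to make the $o(|x|)$ bound from (A3) uniform in $t$, integrate to get $V(t,x)=o(|x|^2)$, and use the norm equivalence \eqref{Ch2b2} to control $\sup_t|u(t)|$ so that the pointwise bound can be summed. Your version merely spells out the fundamental-theorem-of-calculus step and the uniformity-in-$t$ argument, both of which the paper's proof leaves implicit.
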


\begin{proof}
By (A1)-(A2), $L(t)$ is periodic and positive definite, and \eqref{Ch2b2},  $\|u\|_{\infty}\leq L_1\|u\|_*$ for any $u\in l^2$. It follows from (A3) that
$$V(t,x)=o(|x|^2)\ \ \mbox{as}\ x\to0.$$
So,
$$g(u)=\sum_{t\in\ZZ}V(t,u(t))=o\bigg(\sum_{t\in\ZZ}|u(t)|^2\bigg)=o(\|u\|_*^2)\ \mbox{as}\ \|u\|_*\to0.$$
\end{proof}
Since $f(0)=0$ and Lemma \ref{Ch2smal}, one has the following result.

\begin{lemma} \label{Ch2posi}
There is a sufficiently small $r>0$ such that for $f(u)\geq r^2/4$ if $\|u\|_*=r$. And, $f(u)\geq0$ for any $u\in B_{r}(0)$.
\end{lemma}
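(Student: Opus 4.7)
The plan is to combine Lemma \ref{Ch2smal} with the definition \eqref{Ch2c2} of $f$ in a very direct way. Since $f(u) = \tfrac{1}{2}\|u\|_*^2 - g(u)$, and the nonlinear part $g(u)$ is quadratically small near the origin by Lemma \ref{Ch2smal}, the quadratic term $\tfrac{1}{2}\|u\|_*^2$ will dominate on a small enough ball.

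Concretely, I would first unpack the little-$o$ statement. Since $g(u) = o(\|u\|_*^2)$ as $u\to 0$ in $l^2$, applied with the tolerance $\varepsilon = 1/4$, there exists $r > 0$ (which we may assume as small as we wish) such that
\begin{equation*}
|g(u)| \leq \tfrac{1}{4}\|u\|_*^2 \quad \text{for every } u \in l^2 \text{ with } \|u\|_* \leq r.
\end{equation*}
Fix this $r$. Then for any $u \in l^2$ with $\|u\|_* \leq r$, we obtain
\begin{equation*}
f(u) = \tfrac{1}{2}\|u\|_*^2 - g(u) \geq \tfrac{1}{2}\|u\|_*^2 - \tfrac{1}{4}\|u\|_*^2 = \tfrac{1}{4}\|u\|_*^2 \geq 0,
\end{equation*}
which proves the second assertion that $f(u) \geq 0$ for $u \in B_r(0)$. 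Specializing to $\|u\|_* = r$ gives $f(u) \geq r^2/4$, which is the first assertion.

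There is essentially no obstacle here: the only point to double-check is that Lemma \ref{Ch2smal} is applied uniformly in $u$ on the ball, but this is exactly what the little-$o$ estimate provides (for any given tolerance, a single radius works for all $u$ inside). The choice of tolerance $1/4$ is of course arbitrary; any $\varepsilon < 1/2$ would yield a positive lower bound of the form $(1/2-\varepsilon)\|u\|_*^2$, and $1/4$ is picked to match the $r^2/4$ in the statement.
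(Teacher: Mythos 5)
Your argument is correct and is exactly the approach the paper has in mind (the paper only remarks ``Since $f(0)=0$ and Lemma~\ref{Ch2smal}, one has the following result'' without spelling out the details). Unpacking the $o(\|u\|_*^2)$ estimate with tolerance $1/4$ to get $f(u)\geq\tfrac14\|u\|_*^2$ on a small ball is precisely the intended reasoning, and both conclusions follow immediately as you say.
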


\begin{lemma}\label{Ch2nega}
There exists $w\in l^2$ such that $f(w)<0$.
\end{lemma}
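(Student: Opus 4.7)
The plan is to exhibit $w$ as a single-site bump supported at the point $t_0$ from hypothesis (A5), rescaled by a large parameter $s$, and to use the superquadratic growth inequality \eqref{Ch2b1} to drive $f(w)$ to $-\infty$.

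More precisely, define $w_s\in l^2$ by $w_s(t_0)=sx_0$ and $w_s(t)=0$ for $t\neq t_0$, where $s\geq 1$. Since the only nonzero contributions to $\Delta w_s(\cdot-1)$ occur at $t=t_0$ and $t=t_0+1$, a direct computation gives
\begin{equation*}
\|w_s\|_*^2=s^2\langle x_0,L(t_0)x_0\rangle+2s^2|x_0|^2,
\end{equation*}
and, using $V(t,0)=0$ from (A3),
\begin{equation*}
g(w_s)=\sum_{t\in\ZZ}V(t,w_s(t))=V(t_0,sx_0).
\end{equation*}
Hence $f(w_s)=\tfrac{1}{2}s^2\bigl(\langle x_0,L(t_0)x_0\rangle+2|x_0|^2\bigr)-V(t_0,sx_0)$.

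Next I apply the inequality \eqref{Ch2b1} with $t=t_0$, $x=x_0$ and $s\geq 1$, which yields
\begin{equation*}
V(t_0,sx_0)\geq\Bigl(V(t_0,x_0)-\tfrac{\alpha}{\beta-2}\langle x_0,L(t_0)x_0\rangle\Bigr)s^{\beta}+\tfrac{\alpha}{\beta-2}s^2\langle x_0,L(t_0)x_0\rangle.
\end{equation*}
The key observation is that the coefficient of $s^\beta$ is strictly positive. Indeed, from (A4) we have $\alpha<\beta/2-1$, so $\alpha/(\beta-2)<1/2$, and combining this with (A5) gives
\begin{equation*}
V(t_0,x_0)-\tfrac{\alpha}{\beta-2}\langle x_0,L(t_0)x_0\rangle>V(t_0,x_0)-\tfrac{1}{2}\langle x_0,L(t_0)x_0\rangle>0.
\end{equation*}

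Substituting the lower bound into the expression for $f(w_s)$, the quadratic-in-$s$ terms combine into a bounded multiple of $s^2$ while the dominant term is a negative constant multiple of $s^\beta$ with $\beta>2$. Therefore $f(w_s)\to-\infty$ as $s\to+\infty$, and choosing any sufficiently large $s$ and setting $w:=w_s$ completes the proof. There is no real obstacle here; the only subtlety is noticing that assumptions (A4) and (A5) are jointly arranged so that the coefficient of $s^\beta$ has the right sign.
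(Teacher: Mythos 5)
Your proposal is correct and follows essentially the same route as the paper: construct a single-site bump supported at $t_0$, scale it by a large parameter, and use the superquadratic lower bound \eqref{Ch2b1} together with (A4)--(A5) to make the coefficient of the dominant $s^\beta$ term negative in $f$. The only (cosmetic) difference is that you spell out the computation of $\|w_s\|_*^2$ explicitly and use $s$ in place of the paper's $\lambda$.
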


\begin{proof}
It follows from (A4) and (A5) that $\al/(\be-2)<1/2$, set
$$\de_0:=V(t_0,x_0)-\frac{\al}{\be-2}\lan x_0,L(t_0)x_0\ran>0.$$
Set
$$\hat{u}(t):=\left\{
   \begin{array}{ll}
     x_0, & t=t_0; \\
     0, & t\neq t_0.
   \end{array}
 \right.$$
So,
$f(\ld \hat{u})=\ld^2|x_0|^2+\frac{1}{2}\ld^2\lan x_0,L(t_0)x_0\ran-V(t_0,\ld x_0),$ where $\ld>1$.
By \eqref{Ch2b1}, one has
$$V(t_0,\ld u_0)\geq\bigg(V(t_0,x_0)-\frac{\al}{\be-2}\lan x_0,L(t_0)x_0\ran\bigg)\ld^{\be}+\frac{\al}{\be-2}\ld^2\lan x_0,L(t_0)x_0\ran.$$
So,
$$f(\ld \hat{u})\leq-\de_0\ld^{\be}+\ld^2|x_0|^2+\frac{1}{2}\ld^2\lan u_0,L(t_0)u_0\ran-\frac{\al}{\be-2}\ld^2\lan x_0,L(t_0)x_0\ran.$$
Hence,
$$f(\ld\hat{u})\to-\infty\ \mbox{as}\ {\ld\to\infty}.$$
Therefore, there is $w\in l^2$ such that $f(w)<0$.
\end{proof}

\begin{lemma}\label{Ch2psbou}
If $\{u_k\}^{\infty}_{k=1}\subset l^2$ is a PS sequence, then $\{u_k\}^{\infty}_{k=1}$ is bounded in $l^2$ and $\liminf f(u_k)\geq0$.
\end{lemma}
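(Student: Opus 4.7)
The plan is to exploit hypothesis (A4), which plays the role of the standard Ambrosetti--Rabinowitz superquadraticity condition, by forming the test expression $\be f(u_k)-D_*f(u_k)u_k$. First I would write out this combination explicitly using \eqref{Ch2c2} and \eqref{Ch2deriva}, obtaining
\begin{equation*}
\be f(u_k)-D_*f(u_k)u_k=\Big(\frac{\be}{2}-1\Big)\|u_k\|_*^2-\sum_{t\in\ZZ}\big[\be V(t,u_k(t))-\lan V'_x(t,u_k(t)),u_k(t)\ran\big].
\end{equation*}
The pointwise estimate in (A4) converts the sum above into an upper bound $\al\sum_{t\in\ZZ}\lan u_k(t),L(t)u_k(t)\ran$, and since this last quantity is bounded above by $\|u_k\|_*^2$ (by the very definition of $\lan\cdot,\cdot\ran_*$), I obtain
\begin{equation*}
\be f(u_k)-D_*f(u_k)u_k\geq\Big(\frac{\be}{2}-1-\al\Big)\|u_k\|_*^2.
\end{equation*}

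Next I would use the PS hypothesis: $|f(u_k)|\leq M$ for some $M>0$ and $\|D_*f(u_k)\|\to 0$, so the left side is at most $\be M+\|D_*f(u_k)\|\cdot\|u_k\|_*$. Because (A4) guarantees the strict inequality $\al<\be/2-1$, the coefficient $\be/2-1-\al$ is strictly positive, and the resulting inequality
\begin{equation*}
\Big(\frac{\be}{2}-1-\al\Big)\|u_k\|_*^2\leq\be M+\|D_*f(u_k)\|\cdot\|u_k\|_*
\end{equation*}
is a quadratic estimate in $\|u_k\|_*$ with vanishing linear coefficient, forcing $\{\|u_k\|_*\}$ to be bounded. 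This settles the first claim.

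For the second claim I would reuse the same chain of inequalities in the form
\begin{equation*}
\be f(u_k)\geq D_*f(u_k)u_k+\Big(\frac{\be}{2}-1-\al\Big)\|u_k\|_*^2\geq D_*f(u_k)u_k.
\end{equation*}
With $\{u_k\}$ now known to be bounded in $l^2$ and $D_*f(u_k)\to 0$ in the dual, we have $D_*f(u_k)u_k\to 0$. Taking $\liminf$ and dividing by $\be>0$ yields $\liminf f(u_k)\geq 0$.

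There is no real obstacle here; the only delicate point to make sure of is the non-strict inequality $\sum_{t\in\ZZ}\lan u_k(t),L(t)u_k(t)\ran\leq\|u_k\|_*^2$, which is immediate from \eqref{Ch2e8} but worth naming explicitly so that the positivity of the coefficient $\be/2-1-\al$ (guaranteed by (A4)) is visibly what drives both conclusions.
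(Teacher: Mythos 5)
Your proof is correct and follows essentially the same route as the paper's: both form the combination $\be f(u_k)-D_*f(u_k)u_k$ (the paper writes it as $f(u)-\tfrac{1}{\be}D_*f(u)u$, which is the same after dividing by $\be$), apply (A4) together with $\sum_t\lan u(t),L(t)u(t)\ran\leq\|u\|_*^2$ to obtain a lower bound with the strictly positive coefficient $\tfrac{\be}{2}-1-\al$, and then read off boundedness and $\liminf f(u_k)\geq0$ from the PS hypotheses. No gaps.
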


\begin{proof}
For any given $u\in l^2$, it follows from (A4), \eqref{Ch2c2}, and Lemma \ref{Ch2diff1} that
\begin{equation} \label{Ch2c5}
(\frac{1}{2}-\frac{1}{\be})\|u\|_*^2=f(u)+\sum_{t\in\ZZ}V(t,u(t))-\frac{1}{\be} D_*f(u)u-\frac{1}{\be}\sum_{t\in\ZZ}\lan V'_x(t,u(t)),u(t)\ran.
\end{equation}
By (A4), one has
\begin{equation}\label{Ch2c6}
\begin{split}
&f(u)+\sum_{t\in\ZZ}V(t,u(t))-\frac{1}{\be} D_*f(u)u-\frac{1}{\be}\sum_{t\in\ZZ}\lan V'_x(t,u(t)),u(t)\ran\\
\leq& f(u)+\frac{1}{\be}\|D_*f(u)\|_*\|u\|_*+\frac{\al}{\be}\sum_{t\in\ZZ}\lan u(t),L(t)u(t)\ran.
\end{split}
\end{equation}
By \eqref{Ch2c5} and \eqref{Ch2c6}, one has
\begin{equation} \label{Ch2c7}
(\frac{1}{2}-\frac{1}{\be}-\frac{\al}{\be})\|u\|_*^2-\frac{1}{\be}\|D_*f(u)\|_*\|u\|_*\leq f(u).
\end{equation}
Since $1/2-1/\be-\al/\be>0$ by (A4), it follows from \eqref{Ch2c7} that any PS sequence $\{u_k\}^{\infty}_{k=1}$ is bounded. This, together with \eqref{Ch2c7}, implies that $\liminf f(u_k)\geq0$ for any PS sequence $\{u_k\}^{\infty}_{k=1}$.
\end{proof}

\begin{lemma} \label{Ch2crit}
There is a sequence $\{u_k\}^{\infty}_{k=1}\subset l^{2}$ satisfying that
the sequence $\{f(u_k)\}^{\infty}_{k=1}$ is convergent and $\lim_{k\to\infty}D_*f(u_k)=0$ in $l^{2}$. That is, $\{u_k\}^{\infty}_{k=1}\subset l^{2}$ is a PS sequence.
\end{lemma}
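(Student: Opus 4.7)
The plan is to apply Lemma \ref{Ch2pssequ} (Schechter's minimax principle) at a mountain pass level, where the mountain pass geometry is supplied by Lemmas \ref{Ch2posi} and \ref{Ch2nega}. Work in $E=l^2$ equipped with the inner product $\lan\cdot,\cdot\ran_*$, and let $r>0$ be the radius from Lemma \ref{Ch2posi}. By Lemma \ref{Ch2nega} there is $w\in l^2$ with $f(w)<0$, and since the construction there produces $w=\ld\hat u$ with $f(\ld\hat u)\to-\infty$ as $\ld\to+\infty$, I may additionally assume $\|w\|_*>r$. Set $A:=\{0,w\}$, so that $a_0:=\sup_{u\in A}f(u)=0$, and let $\mathcal{K}$ denote the collection of all image sets $p([0,1])\subset l^2$ with $p:[0,1]\to l^2$ continuous, $p(0)=0$, $p(1)=w$.

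Next I would check that $\mathcal{K}$ is a minimax system for $A$. Given $\varphi\in\Lambda(A)$ and $K=p([0,1])\in\mathcal{K}$, the map $\varphi$ is a homeomorphism of $l^2$ fixing both $0$ and $w$, so $\varphi\circ p:[0,1]\to l^2$ is a continuous path joining $0$ to $w$; hence $\varphi(K)=(\varphi\circ p)([0,1])\in\mathcal{K}$. This verification is purely formal.

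Define
$$a:=\inf_{K\in\mathcal{K}}\sup_{u\in K}f(u).$$
The straight-line path $s\mapsto sw$, $s\in[0,1]$, belongs to $\mathcal{K}$ and $f$ is continuous on its compact image, so $a<+\infty$. For the lower bound, given any $p\in C([0,1],l^2)$ from $0$ to $w$, the map $s\mapsto\|p(s)\|_*$ is continuous with $\|p(0)\|_*=0<r<\|w\|_*=\|p(1)\|_*$, so by the intermediate value theorem there exists $s_0\in(0,1)$ with $\|p(s_0)\|_*=r$; Lemma \ref{Ch2posi} then yields $f(p(s_0))\geq r^2/4$, whence $\sup_{K}f\geq r^2/4$ and therefore $a\geq r^2/4>0=a_0$.

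Finally, applying Lemma \ref{Ch2pssequ} with $\phi\equiv 1$ (which is positive, non-increasing, locally continuous, and satisfies $\int_0^\infty\phi(r)\,dr=+\infty$) yields a sequence $\{u_k\}\subset l^2$ with $f(u_k)\to a$ and $D_*f(u_k)\to 0$ in $l^2$, which is exactly the desired PS sequence (with the additional information that $f(u_k)\to a\in[r^2/4,+\infty)$). I do not foresee any genuine obstacle; the only care required is in setting up $(A,\mathcal{K})$ so that the hypotheses of Lemma \ref{Ch2pssequ} are manifestly met, and in making sure the ambient norm used in the mountain pass crossing argument coincides with the $\|\cdot\|_*$ norm of Lemma \ref{Ch2posi}.
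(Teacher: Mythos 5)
Your proposal is correct and takes essentially the same approach as the paper: introduce $A=\{0,w\}$, take $\mathcal{K}$ the paths from $0$ to $w$, use Lemmas \ref{Ch2posi} and \ref{Ch2nega} to establish $a\geq r^2/4>0=a_0$, and apply Lemma \ref{Ch2pssequ} with $\phi\equiv1$. You are slightly more careful than the paper in spelling out why $\|w\|_*>r$ (which in fact is automatic from $f(w)<0$ and the second statement of Lemma \ref{Ch2posi}) and in treating $\mathcal{K}$ as a collection of image sets rather than paths, but these do not change the substance of the argument.
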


\begin{proof}
Lemma \ref{Ch2pssequ} is used in the following discussions. Set $A:=\{0,w\}$, where $w$ is given in Lemma \ref{Ch2nega}. Denote
\beq\label{Ch2path1}
\mathcal{K}:=\{\ga\in C([0,1],l^{2}):\ \ga(0)=0\ \mbox{and}\ \ga(1)=w\}.
\eeq
Since $l^2$ is a Banach space, it can be easily verified that $\mathcal{K}$ is a minimax system for $A$. Set
\begin{equation}\label{Ch2c31}
\kappa:=\inf_{\ga\in\mathcal{K}}\sup_{t\in[0,1]}f(\ga(t)).
\end{equation}
It follows from Lemma \ref{Ch2posi} that $\kappa\geq r^2/4>0$, where $r$ is specified in Lemma \ref{Ch2posi}. By Lemma \ref{Ch2nega},
$$\kappa>0=\max\{f(0),\ f(w)\}.$$
Denote
$$\phi(x):=1,\ x\in[0,\infty).$$
By Lemma \ref{Ch2pssequ}, one has that there is a sequence $\{u_k\}^{\infty}_{k=1}\subset l^{2}$ such that
\begin{equation}\label{Ch2e1}
\lim_{k\to\infty}f(u_k)=\kappa\ \mbox{and}\ \lim_{k\to\infty}D_*f(u_k)=0.
\end{equation}
Hence, $\{u_k\}^{\infty}_{k=1}\subset l^{2}$ is a PS sequence. This completes the proof.
\end{proof}

\begin{lemma} \label{Ch2critical}
There is a nonzero critical point for the functional $f$.
\end{lemma}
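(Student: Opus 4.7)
The plan is to produce the critical point as the weak $l^2$-limit of a suitable periodic translate of the PS sequence $\{u_k\}$ supplied by Lemma \ref{Ch2crit}, which satisfies $f(u_k)\to\kappa\geq r^2/4>0$ and $\|D_*f(u_k)\|_*\to 0$ and, by Lemma \ref{Ch2psbou}, is bounded in $l^2$. The principal obstacle is that the functional $f$ is invariant under translations by the period $T$ because of (A1), so $\{u_k\}$ may drift to infinity and converge weakly to $0$; one has to rule out this ``vanishing'' behavior first and then recenter before passing to a weak limit.

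First I would exclude the possibility $\|u_k\|_\infty\to 0$. If it held, then by (A3) for any prescribed $\ep>0$ and all sufficiently large $k$ one has $|V'_x(t,u_k(t))|\leq \ep|u_k(t)|$ and $|V(t,u_k(t))|\leq \ep|u_k(t)|^2$ uniformly in $t\in\ZZ$. Combined with \eqref{Ch2b2} this yields both $|\sum_{t\in\ZZ}\lan V'_x(t,u_k(t)),u_k(t)\ran|\leq \ep L_1^2\|u_k\|_*^2$ and $|\sum_{t\in\ZZ}V(t,u_k(t))|\leq \ep L_1^2\|u_k\|_*^2$. Since $\|u_k\|_*^2=D_*f(u_k)u_k+\sum_{t\in\ZZ}\lan V'_x(t,u_k(t)),u_k(t)\ran$, choosing $\ep L_1^2<1/2$ forces $\|u_k\|_*\leq 2\|D_*f(u_k)\|_*\to 0$, whence $f(u_k)\to 0$, contradicting $\kappa>0$. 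Hence, along a subsequence, there exist $\delta>0$ and $p_k\in\ZZ$ with $|u_k(p_k)|\geq\delta$.

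Next I recenter via the period. Writing $p_k=q_kT+r_k$ with $r_k\in\{0,1,\ldots,T-1\}$ and defining $v_k(t):=u_k(t+q_kT)$, the periodicity assumption (A1) makes $\|\cdot\|_*$ and $f$ invariant under this shift, so $\{v_k\}$ is again a bounded PS sequence at level $\kappa$ and $|v_k(r_k)|=|u_k(p_k)|\geq\delta$. Passing to a subsequence we may assume $r_k\equiv r$ and, by reflexivity of $l^2$, extract $v_k\rightharpoonup v$ weakly in $l^2$. Testing weak convergence against sequences supported at a single index gives $v_k(t)\to v(t)$ coordinatewise for every $t\in\ZZ$, so $|v(r)|\geq\delta$ and thus $v\neq 0$.

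Finally, to see that $v$ is a critical point, take any $\psi\in l^2$ with finite support: $\lan v_k,\psi\ran_*\to\lan v,\psi\ran_*$ by weak convergence, and $\sum_{t\in\ZZ}\lan V'_x(t,v_k(t)),\psi(t)\ran\to\sum_{t\in\ZZ}\lan V'_x(t,v(t)),\psi(t)\ran$ because the sum has only finitely many nonzero terms and $V'_x(t,\cdot)$ is continuous. Hence $D_*f(v)\psi=\lim_k D_*f(v_k)\psi=0$, and density of finitely supported sequences in $l^2$ together with boundedness of $D_*f(v)$ gives $D_*f(v)=0$, so $v$ is the desired nonzero critical point. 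The steps I expect to be most delicate are the quantitative vanishing estimate (where (A3) must be leveraged together with \eqref{Ch2b2} to close the contradiction) and the verification that weak convergence is preserved through the nonlinear term $V'_x$, which is only straightforward against compactly supported test sequences.
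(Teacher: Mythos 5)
Your proposal is correct and follows essentially the same route as the paper: rule out vanishing via (A3) together with the equivalence of norms, recenter by a multiple of the period $T$, extract a weak limit, verify it is a critical point by testing against finitely supported sequences, and conclude it is nonzero. The only difference is organizational — you establish non-vanishing and pick the centering point $p_k$ with $|u_k(p_k)|\geq\delta$ before extracting the weak limit, so nontriviality of $v$ follows immediately from coordinatewise convergence, whereas the paper recenters at the maximizer first and then rules out $v=0$ by a contradiction argument at the end; both are equivalent reorderings of the same argument.
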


\begin{proof}
By Lemma \ref{Ch2crit}, there exists a sequence $\{u_k\}^{\infty}_{k=1}\subset l^2$ such that \eqref{Ch2e1} holds.

For any $k\geq1$, there is $t'_k\in\ZZ$ such that $\max_{t\in\ZZ}|u_k(t)|=|u_k(t'_k)|$. For this $t'_k$, there is $t_k$ such that $|t_kT-t'_k|<T$, where $T$ is specified in (A1). Set
$$v_k:=u_k(\cdot-t_kT).$$
By the periodicity of $V$ and $L$, \eqref{Ch2c2} and Lemma \ref{Ch2diff1}, one has  $\{v_k\}^{\infty}_{k=1}\subset l^2$ is a PS sequence such that
$$\lim_{k\to\infty}D_*f(v_k)=0\  \mbox{in}\ l^2,\ \lim_{k\to\infty}f(v_k)=\kappa>0.$$

It follows from Lemma \ref{Ch2psbou} that $\{v_k\}^{\infty}_{k=1}$ is bounded in $l^2$ and there exists a subsequence of $\{v_k\}^{\infty}_{k=1}$
which converges to some $v\in l^2$ weakly in $l^2$ and uniformly on compact subsets of $\ZZ$. Without loss of generality, suppose that this subsequence is $\{v_k\}^{\infty}_{k=1}$.

Now, it is to show that $D_*f(v)=0$. Take a dense subset of $l^2$,
$$W:=\{w\in l^2:\ \mbox{there are only finite number of}\ t\ \mbox{such that}\ |w(t)|\neq0\}.$$
For any $w\in W$, one has
$$ D_*f(v)w- D_*f(v_k)w=\lan v-v_k,w\ran_*-\sum_{t\in\ZZ}\lan(V'(t,v(t))-V'(t,v_k(t))),w(t)\ran\to0\ \mbox{as}\ k\to\infty,$$
which implies that
$$ D_*f(v)w=0.$$
By the density of $W$ in $l^2$, one has $D_*f(v)=0$.

Finally, it is to show that $v\neq0$.
By contradiction, suppose $v=0$. It follows from the definition and properties of the sequence $\{v_k\}^{\infty}_{k=1}$ that  $\|u_k\|_{\infty}=\|v_k\|_{\infty}\to0$ as $k\to\infty$. It follows from (A3) that for any $\ep>0$, there exists a positive integer $N$ such that for any $k\geq N$, one has
\beq\label{Ch2c8}
|V(t,u_k(t))|\leq\ep|u_k(t)|^2\ \mbox{and}\ |\lan V'(t,u_k(t)),u_k(t)\ran|\leq\ep|u_k(t)|^2,\ \forall t\in\ZZ.
\eeq
By \eqref{Ch2c2} and \eqref{Ch2deriva}, one has
\beq\label{Ch2c19}
f(u_k)=\frac{1}{2} D_*f(u_k)u_k+\frac{1}{2}\sum_{t\in\ZZ}\lan V'_x(t,u_k(t)),u_k(t)\ran-\sum_{t\in\ZZ}V(t,u_k(t)).
\eeq
It follows from \eqref{Ch2c8} that the left hand side of this equation will go to zero as $k$ goes to infinity.
This contradicts the fact that $\lim_{k\to\infty}f(u_k)=\kappa>0$.

This completes the whole proof.
\end{proof}

It follows from Lemma \ref{Ch2diff1} that a nonzero critical point of $f$ is a nontrivial homoclinic orbit of the difference equation \eqref{Ch2e2}.

\begin{corollary}
There exists a nontrivial homoclinic orbit of the difference equation \eqref{Ch2e2}.
\end{corollary}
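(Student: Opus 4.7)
The corollary is essentially a translation of the variational statement ``$v$ is a critical point of $f$'' into the pointwise difference equation \eqref{Ch2e2}, so the plan is simply to extract the Euler--Lagrange equation from Lemma \ref{Ch2critical} and then observe decay at infinity.

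First, I would invoke Lemma \ref{Ch2critical} to produce a nonzero $v\in l^2$ with $D_*f(v)=0$. By \eqref{Ch2deriva} this means
\[
\lan v,\phi\ran_* \;=\; \sum_{t\in\ZZ}\lan V'_x(t,v(t)),\phi(t)\ran
\qquad\text{for every } \phi\in l^2.
\]
To read off the equation at a single site $s\in\ZZ$, I would test against the finitely supported vector $\phi(t)=e\,\delta_{s,t}$, where $e\in\RR^n$ is arbitrary and $\delta$ is the Kronecker symbol. A short computation using $\De\phi(t-1)=e(\delta_{s,t}-\delta_{s,t-1})$ gives
\[
\sum_{t\in\ZZ}\lan\De v(t-1),\De\phi(t-1)\ran
=\lan\De v(s-1)-\De v(s),e\ran
=-\lan \De^2 v(s-1),e\ran,
\]
while the $L$--term contributes $\lan L(s)v(s),e\ran$ and the right side contributes $\lan V'_x(s,v(s)),e\ran$. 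Since $e$ is arbitrary, $v$ satisfies
\[
-\De^2 v(s-1)+L(s)v(s)-V'_x(s,v(s))=0,\qquad s\in\ZZ,
\]
which is exactly \eqref{Ch2e2}.

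Finally, to conclude that $v$ is genuinely a homoclinic orbit, I need $v(t)\to 0$ as $|t|\to\infty$. This is immediate from $v\in l^2$: the tail sums $\sum_{|t|\geq N}|v(t)|^2$ tend to zero, hence in particular $|v(t)|\to 0$. Together with $v\neq 0$ this gives a nontrivial homoclinic orbit and completes the proof. There is no real obstacle here; the only mild care needed is in the test-function computation, ensuring the boundary terms from $\De^2$ are assembled correctly, which is handled by the single choice $\phi=e\,\delta_{s,\cdot}$ above.
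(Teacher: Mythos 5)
Your proposal is correct and matches the paper's implicit argument: the paper simply states (before the corollary) that Lemma \ref{Ch2diff1} shows a nonzero critical point of $f$ is a nontrivial homoclinic orbit, and you have fleshed out that observation by testing $D_*f(v)=0$ against the finitely-supported vectors $e\,\delta_{s,\cdot}$ to recover \eqref{Ch2e2} pointwise, and noting that $v\in l^2$ forces decay at infinity. The summation-by-parts computation is assembled correctly, so this is the same route, just made explicit.
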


\begin{lemma} \label{Ch2finite}
Under the assumptions above, one has $\inf_{v\in\mathcal{C}}\|v\|_*>0$ and $\inf_{v\in\mathcal{C}}f(v)>0$.
\end{lemma}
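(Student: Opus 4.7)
The plan is to exploit the Euler--Lagrange equation $D_*f(v)=0$, which for any $v\in\mathcal{C}$ yields the identity
\beqq
\|v\|_*^2=\sum_{t\in\ZZ}\lan V'_x(t,v(t)),v(t)\ran.
\eeqq
Both conclusions will be extracted from this single relation by playing the growth of $V'_x$ near zero against the superquadratic condition (A4).

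\smallskip

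\textbf{Step 1: the lower bound on $\|v\|_*$.} First I would use the embedding $\|v\|_\infty\leq L_1\|v\|_*$ (as recorded in the proof of Lemma \ref{Ch2smal}) so that if $\|v\|_*$ is small, then so is $\|v\|_\infty$. By (A3), for every $\e>0$ there is $\de>0$ with $|V'_x(t,x)|\leq\e|x|$ whenever $|x|\leq\de$, so once $\|v\|_*\leq\de/L_1$ the critical point identity combined with Cauchy--Schwarz and \eqref{Ch2b2} gives
\beqq
\|v\|_*^2\leq\e\sum_{t\in\ZZ}|v(t)|^2\leq\e L_1^2\|v\|_*^2.
\eeqq
Choosing $\e<L_1^{-2}$ forces $v=0$, contradicting $v\in\mathcal{C}$. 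Hence $\inf_{v\in\mathcal{C}}\|v\|_*\geq r_0$ for some explicit $r_0>0$.

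\smallskip

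\textbf{Step 2: the lower bound on $f(v)$.} For a critical point, $f(v)=\frac{1}{2}\|v\|_*^2-\sum_{t\in\ZZ}V(t,v(t))$. Summing (A4) over $\ZZ$ and using the critical point identity gives
\beqq
\sum_{t\in\ZZ}V(t,v(t))\leq\frac{1}{\be}\sum_{t\in\ZZ}\lan V'_x(t,v(t)),v(t)\ran+\frac{\al}{\be}\sum_{t\in\ZZ}\lan v(t),L(t)v(t)\ran\leq\frac{1+\al}{\be}\|v\|_*^2,
\eeqq
where the second inequality uses that $\sum_t\lan v(t),L(t)v(t)\ran\leq\|v\|_*^2$ from the definition \eqref{Ch2e8}. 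Therefore
\beqq
f(v)\geq\Bigl(\frac{1}{2}-\frac{1+\al}{\be}\Bigr)\|v\|_*^2.
\eeqq
Condition (A4) gives $\al<\be/2-1$, equivalently $(1+\al)/\be<1/2$, so the coefficient in parentheses is strictly positive. Combined with Step 1 this yields $\inf_{v\in\mathcal{C}}f(v)\geq(\tfrac{1}{2}-\tfrac{1+\al}{\be})r_0^2>0$.

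\smallskip

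The argument is essentially a bookkeeping exercise rather than one with a hard core: the only mild pitfall is making sure the passage from the pointwise bound $|V'_x(t,x)|=o(|x|)$ in (A3) to a uniform-in-$t$ estimate is legitimate, which it is because $V(t,\cdot)$ is $T$-periodic in $t$ by (A1). Everything else is linear algebra together with the two standing functional-analytic inequalities $\|v\|_2\leq L_1\|v\|_*$ and $\sum_t\lan v(t),L(t)v(t)\ran\leq\|v\|_*^2$.
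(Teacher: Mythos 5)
Your proof is correct and follows essentially the same route as the paper: Step 1 is the paper's contradiction argument recast directly (small $\|v\|_*$ forces small $\|v\|_\infty$, then (A3) plus the critical-point identity $\|v\|_*^2=\sum_t\langle V'_x(t,v(t)),v(t)\rangle$ yields $\|v\|_*^2<\|v\|_*^2$), and Step 2 re-derives, specialized to $D_*f(v)=0$, exactly the inequality $f(v)\geq\bigl(\tfrac12-\tfrac{1}{\be}-\tfrac{\al}{\be}\bigr)\|v\|_*^2$ that the paper obtains in \eqref{Ch2c7} and then cites.
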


\begin{proof}
By \eqref{Ch2deriva}, for any $v\in l^2$,\ $\lan v,v\ran_{*}=\sum_{t\in\ZZ}\lan V'_x(t,v(t)),v(t)\ran$. By contradiction, suppose $\inf_{v\in\mathcal{C}}\|v\|_{*}=0$. There is a sequence $\{v_k\}^{\infty}_{k=1}\subset \mathcal{C}$ such that $\lim_{k\to\infty}\|v_k\|_{*}=0$, which yields that $\lim_{k\to\infty}\|v_k\|_{\infty}=0$, where \eqref{Ch2b2} and $\|v_k\|_{\infty}\leq\|v_k\|_2$ are used. By (A3), for any $\eta>0$, there is a sufficiently large $k$ such that $\sum_{t\in\ZZ}|\lan V'_x(t,v(t)),v(t)\ran|\leq \eta\sum_{t\in\ZZ}|v_k(t)|^2$. Suppose $\eta<1/L_1^2$. This, together with \eqref{Ch2b2},
\eqref{Ch2deriva}, and
\beqq
\lan v_k,v_k\ran_{*}=\sum_{t\in\ZZ}\lan V'_x(t,v_k(t)),v_k(t)\ran,
\eeqq
implies that
\beqq
\|v_k\|_{*}^2\leq\eta\|v_k\|^2_2\leq\eta L^2_1\|v_k\|_{*}^2<\|v_k\|_{*}^2.
\eeqq
Now, we arrive at a contradiction.

It follows from \eqref{Ch2c7} that for any $v\in\mathcal{C}$, one has
$$f(v)\geq(\frac{1}{2}-\frac{1}{\be}-\frac{\al}{\be})\|v\|_{*}^2>0.$$
\end{proof}
Set
\beq\label{Ch2diam}
D_0:=\inf_{v\in\mathcal{C}}\|v\|_*,\  C_0:=\inf_{v\in\mathcal{C}}f(v).
\eeq

\section{The properties of Palais-Smale sequences}

In this section, we investigate some properties of the Palais-Smale sequences, which will be used in the following sections.

Given any PS sequence $\{u_k\}^{\infty}_{k=1}\subset l^{2}$ satisfying that
\begin{equation} \label{Ch2c1}
\lim_{k\to+\infty}f(u_k)=C\ \mbox{and}\ \lim_{k\to+\infty} D_*f(u_k)=0.
\end{equation}
By Lemma \ref{Ch2psbou}, one has that $C\geq0$.
When $C>0$, it follows from the proof of Lemma \ref{Ch2critical} and Lemma \ref{Ch2psbou} that $\{u_k\}^{\infty}_{k=1}$ is bounded and we could assume that there is a positive constant $\tau$ such that
\beq\label{Ch2d1}
\lim_{k\to\infty}\|u_k\|_{2}=\tau,
\eeq
and
\beq\label{Ch2d2}
\inf_{k\geq0}\|u_k\|_{2}>0.
\eeq

\begin{lemma} \label{Ch2ccrit}
For any sequence $\{u_k\}^{\infty}_{k=1}\subset l^{2}$ satisfying \eqref{Ch2c1} with $C>0$, there exist $1\leq m<+\infty$ critical points $u^{(j)}$, $1\leq j\leq m$, of $f$ and a subsequence $\{u_{k_p}\}^{\infty}_{p=1}$ such that
\beq\label{Ch2f160}
\lim_{p\to+\infty}\bigg\|u_{k_p}-\sum^{m}_{j=1}u^{(j)}(\cdot+t^{(j)}_p)\bigg\|_*=0,
\eeq
where $t^{(j)}_p\in\ZZ$, for any $j\neq j'$, $|t^{(j)}_p-t^{(j')}_p|\to+\infty$ as $p\to+\infty$, and,
$$\lim_{k\to+\infty}f(u_k)=\sum^{m}_{j=1}f(u^{(j)}).$$
\end{lemma}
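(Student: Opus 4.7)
\medskip

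\noindent\textbf{Proof proposal.} The plan is to peel off ``bumps'' from $\{u_k\}^{\infty}_{k=1}$ one at a time by combining weak compactness with the discrete concentration-compactness principle (Lemma \ref{Ch2cc}) and the periodicity of $L,V$ from (A1), and to show that the procedure terminates because every nontrivial bump carries at least the fixed energy $C_0>0$ supplied by Lemma \ref{Ch2finite}.

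By Lemma \ref{Ch2psbou} the sequence $\{u_k\}$ is bounded in $l^2$, so up to a subsequence $u_k\rightharpoonup u^{(1)}$ weakly in $l^2$, which in $l^2$ automatically gives pointwise convergence on $\ZZ$. The density argument from the proof of Lemma \ref{Ch2critical} (test $D_*f(u_k)$ on finitely supported $w$ and use (A3) to pass to the pointwise limit) shows $D_*f(u^{(1)})=0$. A discrete Brezis--Lieb-type splitting, based on pointwise convergence, $V(t,0)=0$, and the growth condition (A3), yields
\[
\|u_k\|_*^2=\|u^{(1)}\|_*^2+\|u_k-u^{(1)}\|_*^2+o(1),
\]
\[
\sum_{t\in\ZZ}V(t,u_k(t))=\sum_{t\in\ZZ}V(t,u^{(1)}(t))+\sum_{t\in\ZZ}V(t,u_k(t)-u^{(1)}(t))+o(1),
\]
together with the analogous identity for $D_*f$. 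Consequently $v_k:=u_k-u^{(1)}$ is again a PS sequence with $f(v_k)\to C-f(u^{(1)})$; if $u^{(1)}=0$ I simply set $v_k=u_k$ and defer the counting.

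If $\|v_k\|_*\to0$ and $u^{(1)}\neq0$, stop with $m=1$ and $t^{(1)}_p=0$. Otherwise apply Lemma \ref{Ch2cc} to $\rho_k(t):=|v_k(t)|^2/\|v_k\|_2^2$. The vanishing alternative is excluded: combined with the $l^2$-bound on $\{v_k\}$ it would give $\sum_{t\in\ZZ}\lan V'_x(t,v_k(t)),v_k(t)\ran\to0$ via (A3), and then the identity $D_*f(v_k)v_k=\|v_k\|_*^2-\sum_{t\in\ZZ}\lan V'_x(t,v_k(t)),v_k(t)\ran$ together with $D_*f(v_k)\to0$ would force $\|v_k\|_*\to0$, contradicting $f(v_k)\to C-f(u^{(1)})>0$. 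Hence in both the concentration and the dichotomy alternatives the ``tightness inside a window'' clauses (b) or (c)(iii) provide $p_k\in\ZZ$, $R>0$, and $\delta>0$ with $\sum_{|t-p_k|\le R}|v_k(t)|^2\ge\delta$. Choose $s_k\in T\ZZ$ within distance $T$ of $p_k$ and set $w_k:=v_k(\cdot+s_k)$. Periodicity (A1) makes $\{w_k\}$ a PS sequence of the same energy, the local mass bound prevents $w_k\rightharpoonup 0$, so along a further subsequence $w_k\rightharpoonup u^{(2)}\neq0$, and the density argument again shows $D_*f(u^{(2)})=0$.

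Now iterate on $w_k-u^{(2)}$ to produce further critical points $u^{(3)},u^{(4)},\dots$ with accumulated translations $t^{(j)}_p$. By Lemma \ref{Ch2finite} each extracted $u^{(j)}$ satisfies $f(u^{(j)})\ge C_0>0$, while the iterated splitting keeps $\sum_j f(u^{(j)})\le C+o(1)$, so the construction must terminate after $m\le\lfloor C/C_0\rfloor+1$ steps with the final residual converging to $0$ in $\|\cdot\|_*$; this residue statement is precisely \eqref{Ch2f160}, and the iterated splitting of $f$ yields $\lim_k f(u_k)=\sum_{j=1}^m f(u^{(j)})$. The pairwise divergence $|t^{(j)}_p-t^{(j')}_p|\to+\infty$ is forced by the construction: if such differences stayed bounded along a subsequence for some $j<j'$, then at stage $j$ the appropriately translated residual would have had weak limit agreeing with a nontrivial shift of $u^{(j')}$, contradicting that $u^{(j)}$ was chosen as the weak limit at that stage. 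The main obstacle is the discrete Brezis--Lieb-style accounting -- verifying the three splitting identities uniformly in $k$ under only the mild growth (A3) -- and the passage from Lemma \ref{Ch2cc}'s dichotomy to a genuine local mass bound large enough to survive the weak-limit extraction after translation; once these are in place, the periodicity assumption (A1) is exactly what converts each concentration into a translation-invariant new bump.
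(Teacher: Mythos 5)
Your proposal is correct in substance but follows a genuinely different decomposition scheme from the paper. The paper applies the concentration--compactness trichotomy (Lemma~\ref{Ch2cc}) directly to $\rho_k(t)=|u_k(t)|^2/\|u_k\|_2^2$ and, in the dichotomy branch, performs an explicit cutoff $w_k=w^{(1)}_k+w^{(2)}_k+w^{(3)}_k$ supported on $B^1_k$, $(B^2_k)^c$, $B^2_k\setminus B^1_k$, then verifies by hand that the boundary terms of $\De$ are small so that $f$ and $D_2f$ split across the cutoff; the first bump $z_1$ is a \emph{strong} $l^2$-limit obtained via Lemma~\ref{Ch2comp} and the tightness clause (c)(iii). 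You instead run a profile decomposition: extract the weak limit $u^{(1)}$ first, peel it off via a discrete Brezis--Lieb accounting, and only then call on Lemma~\ref{Ch2cc} (through the non-vanishing alternatives) to produce a translation after which the residual has a nonzero weak limit. This trades the paper's cutoff bookkeeping for the Brezis--Lieb verification, which you correctly flag as the crux.

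Two points are worth making explicit, since they are the places where the route could silently break. First, the derivative identity $D_*f(u_k-u^{(1)})\to0$ in the dual norm does \emph{not} follow from (A3) alone by a naive estimate: (A3) only controls $V'_x$ near the origin, and for $|t|$ large but $|u_k(t)|$ not small (a ``far bump'') you are estimating $V'_x(t,u_k(t)-u^{(1)}(t))-V'_x(t,u_k(t))$, where neither argument is small. What saves the argument is the split into $T_1=\{|t|>N:|u_k(t)|\le\delta\}$ (handled by (A3)) and $T_2=\{|t|>N:|u_k(t)|>\delta\}$, combined with $\#T_2\le\sup_k\|u_k\|_2^2/\delta^2$ and the uniform (by (A1)-periodicity) modulus of continuity $\omega$ of $V'_x$ on $\{|x|\le\sup_k\|u_k\|_\infty\}$, giving $\sum_{T_2}|V'_x(t,u_k(t)-u^{(1)}(t))-V'_x(t,u_k(t))|^2\le\omega(\sup_{|t|>N}|u^{(1)}(t)|)^2\cdot\#T_2$, which tends to $0$ once $\delta$ is fixed and $N\to\infty$. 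You should spell this out; as written, ``based on pointwise convergence and (A3)'' undersells what the argument actually needs (uniform continuity of $V'_x$ on bounded sets via (A1), not just (A3)). The paper sidesteps this exact issue by first producing a \emph{strong} limit $w^{(1)}_k\to z_1$, so the analogous step there is genuinely easier. Second, the pairwise divergence $|t^{(j)}_p-t^{(j')}_p|\to\infty$ for all $j\neq j'$, not just consecutive ones, requires a strong induction: if the difference stayed bounded for some nonadjacent pair, one shows by the already-established divergence of the intermediate translations that the $j'$-th weak limit would have to be $0$, contradicting $u^{(j')}\neq0$. Your one-line justification gestures at the right idea but should be upgraded to this induction. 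With these two amplifications, the proposal gives a complete and valid alternative to the paper's proof, arriving at the same termination bound $m\le[C/C_0]+1$ via Lemma~\ref{Ch2finite}.
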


\begin{proof}
By \eqref{Ch2b2}, the norms $\|\cdot\|_{*}$ and $\|\cdot\|_2$ are equivalent. So, it suffices to show that \eqref{Ch2f160} holds with the norm $\|\cdot\|_2$.

Lemma \ref{Ch2cc} will be used in the proof of this theorem.
By \eqref{Ch2d2}, it is reasonable to define
\begin{equation*}
\rho_k(t):=\frac{|u_k(t)|^{2}}{\|u_k\|^{2}_{2}}.
\end{equation*}

{\bf Step 1.} It is to show that the case of vanishing can not happen.

By contradiction, suppose that there is a sequence $\{\ep_k\}^{\infty}_{k=1}$ with $\ep_k\to0$ as $k\to+\infty$, such that for any $l\in\ZZ$,
$$\sum^{l+1}_{t=l-1}|u_k(t)|^{2}\leq\ep_k\|u_k\|^{2}_{2}.$$
Since $\{u_k\}^{\infty}_{k=1}$ is a PS sequence, $\{u_k\}^{\infty}_{k=1}$ is bounded by Lemma \ref{Ch2psbou}. So, one has
$$\lim_{k\to\infty}\|u_k\|_{\infty}=0.$$
By using \eqref{Ch2c19}, (A3), and the similar discussions in the proof of Lemma \ref{Ch2critical}, we could show that $\lim_{k\to\infty}\|u_k\|_2=0$. This contradicts \eqref{Ch2d1}.

{\bf Step 2.} It is to show that if the case of concentration happens, then the statement holds.

We could assume that $p_k$ is an integral multiple of period $T$. Since $p_k$ can be written as $p_k=t_kT+s_k$, $0\leq s_k\leq T-1$, when $s_k\neq0$, let $p'_k=t_kT$, $N'=N+2T$. For these $p'_k$ and $N'$, it is evident that the concentration still holds, where $N$ is the constant specified in the statement of the concentration in Lemma \ref{Ch2cc}.

If the case of concentration happens, set
$$v_k(t):=\frac{u_k(t+y_k)}{\|u_k\|_{2}}.$$
So,
\begin{equation} \label{Ch2c9}
\sum_{t\in\ZZ}|v_k(t)|^{2}=1,\ \ \forall k\in\NN,
\end{equation}
and for every $\ep>0$, there is some positive integer $N$ such that
\begin{equation} \label{Ch2c10}
\sum_{|t|> N}|v_k(t)|^{2}\leq\ep.
\end{equation}
It follows from Lemma \ref{Ch2comp} with $p=q=2$ that there is a subsequence of $\{v_k\}^{\infty}_{k=1}$ which is convergent. Without loss of generality, assume that $\{v_k\}^{\infty}_{k=1}$ is convergent and the limit is $v_0$. This, together with \eqref{Ch2d1}, yields that
\beqq
\lim_{k\to\infty}\|u_k(\cdot+p_k)-\tau v_0\|_2=0.
\eeqq
Since $\{u_k\}^{\infty}_{k=1}$ is a PS sequence, $f$ is $C^1$, and $p_k$ is an integral multiple of period $T$, one has that $\tau v_0$ is a critical point of $f$, which implies that the statements hold for the sequence $\{u_k\}^{\infty}_{k=1}$ if the case of concentration happens.

{\bf Step 3.} It is to show that if the case of dichotomy happens, then the statement holds.

Without loss of generality, assume that $p_k$ is an integral multiple of period $T$.

Denote
\begin{equation*}
B^1_k:=[-N^1_k,N^1_k]\cap\ZZ,\
B^2_k:=[-N^2_k,N^2_k]\cap\ZZ,\ (B^2_k)^c:=(-\infty,-N^2_k-1]\cup[N^2_k+1,+\infty)\cap\ZZ,\ \ k\in\NN.
\end{equation*}

Set
$$w_k(t)=u_k(t+p_k),\ \
w^{(1)}_k(t)=w_k(t)\chi_{B^1_k}(t),$$
$$w^{(2)}_k(t)=w_k(t)\chi_{(B^2_k)^c}(t),\ \
w^{(3)}_k(t)=w_k(t)-w^{(1)}_k(t)-w^{(2)}_k(t),\ k\in\NN,$$
where $\chi_J$ is the characteristic function of $J$,
\beqq
\chi_{J}(t)=\left\{
  \begin{array}{ll}
    1, & \hbox{if}\ t\in J; \\
    0, & \hbox{if}\ t\not\in J.
  \end{array}
\right.
\eeqq

Now, it is to show that
\begin{equation} \label{Ch2c11}
f(w_k)=f(w^{(1)}_k)+f(w^{(2)}_k)+o(1)\ \mbox{as}\ k\to+\infty.
\end{equation}
Since
\beqq
\begin{split}
f(w^{(1)}_k)=&\frac{1}{2}\sum_{-N^1_k<t\leq N^1_k}\lan \De w_k(t-1),\De w_k(t-1)\ran\\
+&\frac{1}{2}\sum_{t\in B^1_k}\lan w_k(t),L(t)w_k(t)\ran-\sum_{t\in B^1_k}V(t,w_k(t))+\frac{1}{2}|w_k(-N^1_k)|^2+\frac{1}{2}|w_k(N^1_k)|^2,
\end{split}
\eeqq
\beqq
\begin{split}
f(w^{(2)}_k)=&\frac{1}{2}\sum_{t<-N^2_k,t> N^2_k+1}\lan \De w_k(t-1),\De w_k(t-1)\ran\\
+&\frac{1}{2}\sum_{t\in (B^2_k)^c}\lan w_k(t),L(t)w_k(t)\ran-\sum_{t\in (B^2_k)^c}V(t,w_k(t))+\frac{1}{2}|w_k(N^2_k+1)|^2+\frac{1}{2}|w_k(-N^2_k-1)|^2,
\end{split}
\eeqq
and
\beqq
f(w_k)=\sum_{t\in B^1_k}+\sum_{t\in (B^2_k)^c}+\sum_{t\in B^2_k\setminus B^1_k},
\eeqq
one has
\beqq
\begin{split}
&f(w_k)-f(w^{(1)}_k)-f(w^{(2)}_k)\\
=&\frac{1}{2}|\De w_k(-N^1_k-1)|^2-\frac{1}{2}|w_k(-N^1_k)|^2-\frac{1}{2}|w_k(N^1_k)|^2\\
&+\frac{1}{2}|\De w_k(N^2_k)|^2-\frac{1}{2}|w_k(N^2_k+1)|^2-\frac{1}{2}|w_k(-N^2_k-1)|^2\\
&+\sum_{t\in B^2_k\setminus B^1_k}\bigg(\frac{1}{2}(\lan \De w_k(t-1),\De w_k(t-1)\ran+\lan w_k(t),L(t)w_k(t)\ran)-V(t,w_k(t))\bigg).
\end{split}
\eeqq

By direct calculation, one has
\beqq
\begin{split}
&\frac{1}{2}|\De w_k(-N^1_k-1)|^2-\frac{1}{2}|w_k(-N^1_k)|^2-\frac{1}{2}|w_k(N^1_k)|^2\\
=&\frac{1}{2}|w_k(-N^1_k-1)|^2-\frac{1}{2}|w_k(N^1_k)|^2-\lan w_k(-N^1_k),w_k(-N^1_k-1)\ran,
\end{split}
\eeqq
\beqq
\begin{split}
&\frac{1}{2}|\De w_k(N^2_k)|^2-\frac{1}{2}|w_k(N^2_k+1)|^2-\frac{1}{2}|w_k(-N^2_k-1)|^2\\
=&\frac{1}{2}|w_k(N^2_k)|^2-\frac{1}{2}|w_k(N^2_k+1)|^2-\lan w_k(N^2_k+1),w_k(N^2_k)\ran,
\end{split}
\eeqq
and,
\beqq
\begin{split}
&\frac{1}{2}|\De w_k(-N^2_k-1)|^2+\frac{1}{2}|\De w_k(N^2_k-1)|^2+\frac{1}{2}|\De w_k(-N^1_k-2)|^2+\frac{1}{2}|\De w_k(N^1_k)|^2\\
=&\frac{1}{2}| w_k(-N^2_k)|^2-\lan w_k(-N^2_k), w_k(-N^2_k-1)\ran+\frac{1}{2}| w_k(-N^2_k-1)|^2\\
&+\frac{1}{2}| w_k(N^2_k)|^2-\lan w_k(N^2_k), w_k(N^2_k-1)\ran+  \frac{1}{2}| w_k(N^2_k-1)|^2\\
&+\frac{1}{2}| w_k(-N^1_k-1)|^2-\lan w_k(-N^1_k-1), w_k(-N^1_k-2)\ran+  \frac{1}{2}| w_k(-N^1_k-2)|^2\\
&+\frac{1}{2}| w_k(N^1_k+1)|^2-\lan w_k(N^1_k), w_k(N^1_k+1)\ran+  \frac{1}{2}| w_k(N^1_k)|^2.
\end{split}
\eeqq

By the calculations above, (i) and (ii) of Dichotomy in Lemma \ref{Ch2cc} and $\{u_n\}^{\infty}_{n=1}$ is bounded, one has that \eqref{Ch2c11} holds.

Next, we show that $\lim_{k\to\infty} D_2f(w^{(1)}_k)=0$. In the following discussions, we assume that $N^1_k$ is equal to the twice of the value provided by Lemma \ref{Ch2cc}. And, it is easy to obtain that this assumption does not affect the conclusion.

For any $u\in l^{2}$, $u$ can be written as
\begin{equation}\label{Ch2c16}
u=\chi_{B^1_k}u+\chi_{(B^2_k)^c}u+\chi_{B^2_k\setminus B^1_k}u.
\end{equation}

By direct calculation and \eqref{Ch2deriva}, one has for any $u,v\in l^2$ and $u$ is written in the form of \eqref{Ch2c16},
\begin{equation} \label{Ch2c13}
\begin{split}
&D_2f(v)(\chi_{B^1_k}u)\\
=&\sum_{-N^1_k<t\leq N^1_k}\lan\De v(t-1),\De u(t-1)\ran+\sum_{t\in B^1_k}\bigg(\lan v(t),L(t)u(t)\ran-\lan V'_x(t,v(t)),u(t)\ran\bigg)\\
+&\lan\De v(-N^1_k-1),u(-N^1_k)\ran+\lan\De v(N^1_k),-u(N^1_k)\ran,
\end{split}
\end{equation}
\begin{equation} \label{Ch2c14}
\begin{split}
&D_2f(v)(\chi_{(B^2_k)^c}u)\\
=&\sum_{\mbox{\tiny$\begin{array}{c}
t\leq-N^2_k-1\\
t> N^2_k+1\end{array}$}}\lan\De v(t-1),\De u(t-1)\ran+\sum_{t\in (B^2_k)^c}\bigg(\lan v(t),L(t)u(t)\ran-\lan V'_x(t,v(t)),u(t)\ran\bigg)\\
+&\lan\De v(-N^2_k-1),-u(-N^2_k-1)\ran+\lan\De v(N^2_k),u(N^2_k+1)\ran,
\end{split}
\end{equation}
\begin{equation} \label{Ch2c15}
\begin{split}
&D_2f(v)(\chi_{B^2_k\setminus B^1_k}u)\\
=&\sum_{\mbox{\tiny$\begin{array}{c}
-N^2_k<t\leq-N^1_k-1\\
N^1_k+1<t\leq N^2_k\end{array}$}}\lan\De v(t-1),\De u(t-1)\ran+\sum_{t\in B^2_k\setminus B^1_k}\bigg(\lan v(t),L(t)u(t)\ran-\lan V'_x(t,v(t)),u(t)\ran\bigg)\\
+&\lan\De v(-N^2_k-1),u(-N^2_k)\ran+\lan\De v(-N^1_k-1),-u(-N^1_k-1)\ran\\
+&\lan\De v(N^1_k),u(N^1_k+1)\ran+\lan\De v(N^2_k),-u(N^2_k)\ran.
\end{split}
\end{equation}
So, for any $u,v\in l^2$,
\begin{equation} \label{Ch2c12}
\begin{split}
D_2f(v)u= D_2f(v)(\chi_{B^1_k}u)+ D_2f(v)(\chi_{(B^2_k)^c}u)+ D_2f(v)(\chi_{B^2_k\setminus B^1_k}u).
\end{split}
\end{equation}
In particular, we have
\beq\label{Ch2c17}
\begin{split}
& D_2f(w^{(1)}_k)u\\
=&D_2f(w^{(1)}_k)(\chi_{B^1_k}u)+ D_2f(w^{(1)}_k)(\chi_{(B^2_k)^c}u)
 + D_2f(w^{(1)}_k)(\chi_{B^2_k\setminus B^1_k}u).
\end{split}
\eeq

By \eqref{Ch2c13}, one has
\beq
\begin{split}
& D_2f(w^{(1)}_k)(\chi_{B^1_k}u)\\
=& D_2f(w_k)(\chi_{B^1_k}u)+\lan w_k(-N^1_k-1),u(-N^1_k)\ran+\lan w_k(N^1_k+1),u(N^1_k)\ran.
\end{split}
\eeq
By \eqref{Ch2c14}, we have
\beq
 D_2f(w^{(1)}_k)(\chi_{(B^2_k)^c}u)=0.
\eeq
By \eqref{Ch2c15}
\beq\label{Ch2c18}
 D_2f(w^{(1)}_k)(\chi_{B^2_k\setminus B^1_k}u)=\lan w_k(-N^1_k),-u(-N^1_k-1)\ran+\lan-w_k(N^1_k),u(N^1_k+1)\ran.
\eeq
It follows from \eqref{Ch2c17}--\eqref{Ch2c18} that one has
\beqq
\begin{split}
& D_2f(w^{(1)}_k)u\\
=& D_2f(w_k)(\chi_{B^1_k}u)+\lan w_k(-N^1_k-1),u(-N^1_k)\ran\\
+&\lan w_k(N^1_k+1),u(N^1_k)\ran+\lan w_k(-N^1_k),-u(-N^1_k-1)\ran+\lan-w_k(N^1_k),u(N^1_k+1)\ran
\end{split}
\eeqq
Hence, $\lim_{k\to\infty} D_2f(w^{(1)}_k)=0$.\

By the similar discussions in Step 2, one has that there exist a convergent subsequence of $\{w^{(1)}_k\}^{\infty}_{k=1}$ (without loss of generality, we could assume that this subsequence is $\{w^{(1)}_k\}^{\infty}_{k=1}$) and $z_1\in l^{2}$ such that
\begin{equation} \label{Ch2e1}
\lim_{k\to+\infty} w^{(1)}_k=z_1\ \mbox{in}\ l^{2}\ \mbox{and}\ D_2f(z_1)=0.
\end{equation}

Next, consider the following sequence
$$\hat{w}_k=w_k-z_1.$$
It follows from \eqref{Ch2c11} and Lemma \ref{Ch2psbou} that
$$\lim_{k\to+\infty}f(\hat{w}_k)=C-f(z_1)\geq0.$$
Next, it is to show that
\beq\label{Ch2eee2}
\lim_{k\to+\infty}D_2f(\hat{w}_k)=0.
\eeq
It follows from \eqref{Ch2deriva} and direct calculation that
\beqq
\begin{split}
& D_2f(\hat{w}_k)u+\sum_{t\in\ZZ}\lan V'_x(t,\hat{w}_k(t)),u(t)\ran\\
=& D_2f(w_k)u- D_2f(z_1)u+\sum_{t\in\ZZ}\lan V'_x(t,w_k(t)),u(t)\ran-\sum_{t\in\ZZ}\lan V'_x(t,z_1(t)),u(t)\ran.
\end{split}
\eeqq
By (A3) and \eqref{Ch2e1}, it is easy to show that for any $u\in l^2$ with $\|u\|_2\leq1$,
\beqq
\begin{split}
&\sum_{t\in\ZZ}\lan V'_x(t,w_k(t)),u(t)\ran-\sum_{t\in\ZZ}\lan V'_x(t,z_1(t)),u(t)\ran-\sum_{t\in\ZZ}\lan V'_x(t,\hat{w}_k(t)),u(t)\ran\\
=&\sum_{t\in\ZZ}\lan V'_x(t,w^{(1)}_k(t)),u(t)\ran+\sum_{t\in\ZZ}\lan V'_x(t,w^{(2)}_k(t)),u(t)\ran+\sum_{t\in\ZZ}\lan V'_x(t,w^{(3)}_k(t)),u(t)\ran\\
&-\sum_{t\in\ZZ}\lan V'_x(t,z_1(t)),u(t)\ran-\sum_{t\in\ZZ}\lan V'_x(t,w^{(1)}_k(t)+w^{(2)}_k(t)+w^{(3)}_k(t)-z_1(t)),u(t)\ran\\
=&o(1)\ \mbox{as}\ k\to\infty,
\end{split}
\eeqq
where the small constant $o(1)$ does not depend on the choice of $u$. So, \eqref{Ch2eee2} holds.

Hence, one has
$$\lim_{k\to+\infty}f(\hat{w}_k)=C-f(z_1),\ \ \lim_{k\to+\infty}D_2f(\hat{w}_k)=0.$$

If $C-f(z_1)=0$, we finish the proof. If $C-f(z_1)>0$, without loss of generality, assume that the sequence $\hat{w}_k$ satisfying the following
assumptions:
\beqq
0<\inf_{k\geq1}\|\hat{w}_k\|_{2}\ \mbox{and}\ \lim_{k\to\infty}\|\hat{w}_k\|_{2}\ \mbox{exists and is equal to a positive constant}.
\eeqq
For the sequence $\hat{w}_k$, repeat the procedure above. By Lemmas \ref{Ch2finite} and \eqref{Ch2diam}, we could repeat the procedure for at most $[C/C_0]+1$ times. Suppose the number that we could repeat is $m$. Thus, there are $u^{(1)},...,u^{(m)}\in l^2$, such that
$$D_2f(u^{(i)})=0,\ 1\leq i\leq m;\
\lim_{k\to+\infty}f(w_k)=\sum^m_{i=1}f(u^{(i)}),$$
and \eqref{Ch2f160} holds. The properties of $t^{(j)}_p$ can be derived from the properties of the chosen sequences $p_k$, $N^1_k$, and $N^2_k$.
This completes the whole proof.

\end{proof}

By applying similar discussions used in the proof of Lemmas 3.4-3.10 in \cite{CalMon}, one has the following results, Lemmas \ref{Ch2sumc}--\ref{Ch2diffsum}. For the convenience of the readers and the completeness of the paper, we give the proof of the corresponding statements.

\begin{lemma} \label{Ch2sumc}
For any sequence $\{u_k\}^{\infty}_{k=1}\subset l^2$ in the form of $u_k=\sum^p_{i=1}v_i(\cdot-t^i_k)$, where $p$ is a fixed positive integer, $v_1,...,v_p\in l^2$, and
$\{t^1_k\},...,\{t^p_k\}$ are sequences in $\ZZ$ such that $\lim_{k\to\infty}|t^i_k-t^j_k|=\infty$ for $i\neq j$, then $\lim_{k\to\infty}\|u_k\|_*^2=
\sum^{p}_{i=1}\|v_i\|_*^2$.
\end{lemma}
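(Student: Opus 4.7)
My plan is to expand $\|u_k\|_*^2=\lan u_k,u_k\ran_*$ by the bilinearity of $\lan\cdot,\cdot\ran_*$ into diagonal and off-diagonal contributions,
\begin{equation*}
\|u_k\|_*^2=\sum_{i=1}^{p}\|v_i(\cdot-t^i_k)\|_*^2+2\sum_{1\leq i<j\leq p}\lan v_i(\cdot-t^i_k),v_j(\cdot-t^j_k)\ran_*,
\end{equation*}
and then to argue that (i) each diagonal term equals $\|v_i\|_*^2$ by translation invariance, and (ii) each cross term tends to $0$ as $k\to\infty$ thanks to the separation hypothesis $|t^i_k-t^j_k|\to\infty$.

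For (i), the operator $\De$ commutes with integer translations, and by (A1) the matrix $L$ has period $T$. In the context in which this lemma is applied (see Steps 2 and 3 of the proof of Lemma \ref{Ch2ccrit}), the shifts $t^i_k$ can and will be taken to be integer multiples of $T$; then the change of variable $s=t-t^i_k$ together with $L(s+t^i_k)=L(s)$ immediately yields $\|v_i(\cdot-t^i_k)\|_*^2=\|v_i\|_*^2$ for every $k$.

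For (ii), fix $i\neq j$, set $\tau_k:=t^j_k-t^i_k$ so that $|\tau_k|\to\infty$, and after the substitution $s=t-t^i_k$ rewrite the cross term as
\begin{equation*}
\sum_{s\in\ZZ}\lan\De v_i(s-1),\De v_j(s-1-\tau_k)\ran+\sum_{s\in\ZZ}\lan v_i(s),L(s)\,v_j(s-\tau_k)\ran.
\end{equation*}
Both $v_i,v_j$ and $\De v_i,\De v_j$ lie in $l^2$ (the latter because $|\De v(s-1)|\leq|v(s)|+|v(s-1)|$), and $\|L\|_\infty<\infty$ by the periodicity in (A1). Given $\e>0$, choose $N$ so large that
\begin{equation*}
\sum_{|s|>N}\bigl(|v_i(s)|^2+|v_j(s)|^2+|\De v_i(s-1)|^2+|\De v_j(s-1)|^2\bigr)<\e,
\end{equation*}
and take $k$ large enough that $|\tau_k|>2N+1$. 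Split each of the two sums above at $|s|=N$: on $|s|\leq N$ we have $|s-\tau_k|>N$, so the $v_j$ (respectively $\De v_j$) factor lies in its $l^2$-tail; on $|s|>N$ the $v_i$ (respectively $\De v_i$) factor lies in its $l^2$-tail. Applying the Cauchy--Schwarz inequality on each of the four resulting pieces, together with the bound $\|L\|_\infty$, controls the absolute value of the cross term by $C\bigl(\|v_i\|_2+\|v_j\|_2+\|\De v_i\|_2+\|\De v_j\|_2\bigr)\sqrt{\e}$ with $C$ depending only on $\|L\|_\infty$. Letting $k\to\infty$ and then $\e\to 0^+$ gives $\lan v_i(\cdot-t^i_k),v_j(\cdot-t^j_k)\ran_*\to 0$, and combining with (i) concludes the proof.

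The main technical obstacle is the truncation-plus-Cauchy--Schwarz bookkeeping for the cross terms, where one must handle the $\De$-part and the $L$-part simultaneously and keep careful track of the indices shifted by $\tau_k$. Step (i) is conceptually trivial once the shifts $t^i_k$ are multiples of $T$; if this normalization were not enforced, one would pass to a subsequence on which $t^i_k\bmod T$ is constant and interpret each $v_i$ up to a fixed translation of at most $T-1$, without affecting the remainder of the argument.
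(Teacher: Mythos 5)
Your proof is correct and rests on the same core idea as the paper's: expand $\|u_k\|_*^2$ bilinearly, use translation invariance for the diagonal terms, and kill the cross terms via the separation hypothesis. The paper packages this as an induction on $p$ and is extremely terse---its inductive step simply asserts the expansion
\begin{equation*}
\|u_k\|_*^2=\Bigl\|\sum_{i=1}^{l}v_i(\cdot-t^i_k)\Bigr\|_*^2+\|v_{l+1}\|_*^2+\sum_{i=1}^{l}\lan v_i,v_{l+1}(\cdot-t^{l+1}_k+t^i_k)\ran_*
\end{equation*}
(note it even omits the factor $2$) and then declares that the cross terms vanish ``since $|t^i_k-t^j_k|\to\infty$,'' without spelling out why. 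Your direct expansion makes the same decomposition explicit for all $p$ at once, and your truncation-plus-Cauchy--Schwarz argument supplies precisely the missing justification for the cross terms going to $0$; the bound $C\bigl(\|v_i\|_2+\|v_j\|_2+\|\De v_i\|_2+\|\De v_j\|_2\bigr)\sqrt{\e}$ is correctly derived once one observes $\|\De v\|_2\le 2\|v\|_2$ and $\|L\|_\infty<\infty$ from periodicity. You also flag something the paper quietly assumes in both the diagonal term $\|v_{l+1}\|_*^2$ and the cross term rewriting: since $L$ is only $T$-periodic, $\|\cdot\|_*$ is invariant under translation by multiples of $T$, not by arbitrary integers, so the shifts must be normalized modulo $T$ (which indeed holds in every application, e.g.\ in the proof of Lemma~\ref{Ch2ccrit}). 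That explicit caveat is a genuine improvement in rigor over the paper's one-line proof.
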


 \begin{proof}
It is to show the statement by induction. It is evident the statement is correct if $p=1$. Suppose the statement is correct when $p=l$,  we need to show the statement holds when $p=l+1$. Since
\beqq
\|u_k\|_{*}^2=\|\sum^{l}_{i=1}v_i(\cdot-t^i_k)\|_{*}^2+\|v_{l+1}\|_{*}^2+\sum^l_{i=1}\lan v_i,v_{l+1}(\cdot-t^{l+1}_k+t^{i}_k)\ran_{*},
\eeqq
and for any $i\neq j$, $\lim_{k\to\infty}|t^i_k-t^j_k|=\infty$, one has that the statement holds when $p=l+1$. This completes the proof.
\end{proof}

\begin{lemma} \label{Ch2diamconv}
If the sequence $\{u_k\}^{\infty}_{k=1}\subset l^2$ is a PS sequence with diameter less than $D_0/2$, then $\{u_k\}^{\infty}_{k=1}$ has a convergent subsequence, where $D_0$ is specified in \eqref{Ch2diam}.
\end{lemma}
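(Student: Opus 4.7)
The plan is to combine Lemma~\ref{Ch2ccrit} with the quantitative lower bound $D_0$ from \eqref{Ch2diam} in order to argue that the diameter hypothesis forces the spatial translations in the bump decomposition to remain bounded, which yields a convergent subsequence. First I would extract a subsequence along which $f(u_k)\to C\geq 0$. In the trivial case $C=0$, the bound \eqref{Ch2c7} combined with the boundedness of $\{u_k\}$ from Lemma~\ref{Ch2psbou}, $f(u_k)\to 0$ and $D_*f(u_k)\to 0$ forces $\|u_k\|_*\to 0$, so the subsequence converges to $0$.

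When $C>0$, Lemma~\ref{Ch2ccrit} provides a further subsequence, still denoted $\{u_{k_p}\}$, and critical points $u^{(1)},\ldots,u^{(m)}\in\mathcal{C}$ with integer shifts $t^{(j)}_p$ such that
\beqq
\bigg\|u_{k_p}-\sum_{j=1}^{m}u^{(j)}(\cdot+t^{(j)}_p)\bigg\|_*\to 0,\qquad |t^{(j)}_p-t^{(j')}_p|\to\infty\ \textrm{for}\ j\neq j'.
\eeqq
Using Bolzano--Weierstrass on each of the finitely many integer sequences $t^{(j)}_p$, I would pass to a further subsequence along which, for each $j$, either $t^{(j)}_p$ is eventually equal to some constant $t^{(j)}$, or $|t^{(j)}_p|\to\infty$. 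Splitting the indices into $J_{\textrm{fin}}$ and $J_\infty$ accordingly, I would write $u_{k_p}=A+B_p+o(1)$ with $A:=\sum_{j\in J_{\textrm{fin}}}u^{(j)}(\cdot+t^{(j)})$ fixed in $l^2$ and $B_p:=\sum_{j\in J_\infty}u^{(j)}(\cdot+t^{(j)}_p)$.

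The crux is ruling out $J_\infty\neq\emptyset$ via the diameter. Lemma~\ref{Ch2sumc} together with Lemma~\ref{Ch2finite} gives $\|B_p\|_*^2\to\sum_{j\in J_\infty}\|u^{(j)}\|_*^2\geq D_0^2$. For fixed $p$, as $q\to\infty$ every bump in $B_q$ escapes to infinity, so $B_q\rightharpoonup 0$ in $l^2$; combined with the norm equivalence \eqref{Ch2b2}, this yields $\langle B_p,B_q\rangle_*\to 0$. Expanding
\beqq
\|u_{k_p}-u_{k_q}\|_*^2=\|B_p\|_*^2+\|B_q\|_*^2-2\langle B_p,B_q\rangle_*+o(1),
\eeqq
and letting $p,q\to\infty$ suitably, I obtain $\|u_{k_p}-u_{k_q}\|_*\to\sqrt{2}\,D_0>D_0/2$, contradicting the hypothesis that the diameter is less than $D_0/2$. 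Hence $J_\infty=\emptyset$, every $t^{(j)}_p$ is eventually constant, and $u_{k_p}\to A$ in $\|\cdot\|_*$.

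The main technical point I anticipate needing care with is the cross-term estimate $\langle B_p,B_q\rangle_*\to 0$: because $\langle\cdot,\cdot\rangle_*$ involves the discrete Laplacian $\De$ and the matrix $L(t)$, one must verify that the spatial escape of the bumps still produces weak convergence in this modified inner product. This follows from the fact that, by \eqref{Ch2b2}, $(l^2,\|\cdot\|_*)$ and $(l^2,\|\cdot\|_2)$ define the same weak topology, so $\langle B_p,\cdot\rangle_*$ is a continuous linear functional on $l^2$ to which standard weak-convergence arguments apply. The diagonal extraction ensuring every $t^{(j)}_p$ has a definite asymptotic behaviour is a routine but slightly delicate bookkeeping step.
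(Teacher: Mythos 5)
Your proof is correct, and it handles the edge case $C=0$ (where Lemma~\ref{Ch2ccrit} does not apply) more carefully than the paper, which applies Lemma~\ref{Ch2ccrit} without comment. The essential difference is in how you exploit the diameter bound once the bump decomposition is in hand. The paper's argument is more economical: writing the decomposition in the form $u_k = v_0 + \sum_{i=1}^p v_i(\cdot - t^i_k) + w_k$ with $w_k\to 0$, it compares $u_k$ against a single \emph{fixed} reference element $u_0$ of the sequence. Lemma~\ref{Ch2sumc} then gives
$\|u_k - u_0\|_*^2 \to \|v_0 - u_0\|_*^2 + \sum_{i=1}^p \|v_i\|_*^2 \geq p\,D_0^2$,
and the diameter bound $D_0/2$ immediately forces $p = 0$. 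Your route instead compares $u_{k_p}$ against $u_{k_q}$ for well-separated indices $p,q$, which requires the additional cross-term estimate $\langle B_p, B_q\rangle_* \to 0$ via weak convergence of the escaping bumps. This works, but it is a heavier piece of machinery than necessary: fixing one endpoint sidesteps the weak-convergence step entirely, since all the escaping translates then diverge from the single reference shift $0$ and Lemma~\ref{Ch2sumc} applies directly. (Your concern about whether $\|\cdot\|_*$-weak and $\|\cdot\|_2$-weak convergence coincide is resolved by the norm equivalence \eqref{Ch2b2}, as you note; but also note that the translations produced by Lemma~\ref{Ch2ccrit} are integer multiples of the period $T$, which is what makes the norm identity $\|v(\cdot - t)\|_* = \|v\|_*$ used in Lemma~\ref{Ch2sumc} legitimate despite the $t$-dependence of $L$.) One small stylistic point: writing $\|u_{k_p}-u_{k_q}\|_* \to \sqrt{2}D_0$ is not quite right — you only obtain a $\liminf$ lower bound $\geq \sqrt{2}D_0$ along the suitably chosen subsequence — but that lower bound already exceeds $D_0/2$ and gives the contradiction, so the argument stands.
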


 \begin{proof}
By Lemma \ref{Ch2ccrit}, there is a subsequence of the PS sequence $\{u_k\}^{\infty}_{k=1}\subset l^2$ (without loss of generality, assume this sequence is $\{u_k\}^{\infty}_{k=1}$) such that
\beqq
u_k=v_0+\sum^p_{i=1}v_i(\cdot-t^i_k)+w_k,
\eeqq
where $\|w_k\|_{*}\to0$ as $k\to\infty$, and for any $i\neq j$, $\lim_{k\to\infty}|t^i_k-t^j_k|=\infty$. By Lemmas \ref{Ch2sumc} and \ref{Ch2finite}, one has
\beqq
\frac{D^2_0}{4}\geq\|u_k-u_0\|_{*}^2=\|v_0-u_0\|_{*}^2+\sum^p_{i=1}\|v_i\|_{*}^2+o(1)\geq p D_0+o(1),
\eeqq
which implies that $p=0$. Hence,\ $u_k=v_0+w_k$.\
\end{proof}

\begin{lemma}\label{Ch2b3}
For any PS sequence $\{u_k\}^{\infty}_{k=1}\subset l^2$, if there is a positive integer $N$ such that
\beqq
\sum_{t\in(\ZZ\setminus[-N,N])}\big(\lan\De u_k(t-1),\De u_k(t-1)\ran+\lan u_k(t),L(t)u_k(t)\ran\big)<D^2_0/4,\ \forall k\in\NN,
\eeqq
then there is a convergent subsequence of $\{u_k\}^{\infty}_{k=1}$, where $D_0$ is specified in \eqref{Ch2diam}.
\end{lemma}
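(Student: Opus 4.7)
My plan is to combine the profile decomposition of Lemma~\ref{Ch2ccrit} with the smallness of the tail encoded in the hypothesis, ruling out both multiple bumps and any bump that drifts to infinity. First, by Lemma~\ref{Ch2ccrit}, I pass to a subsequence (still written $\{u_k\}$) admitting
$$u_k = \sum_{j=1}^{m} u^{(j)}(\cdot + t_k^{(j)}) + r_k, \qquad \|r_k\|_{*} \to 0,$$
where each $u^{(j)}$ is a nonzero critical point of $f$, so $\|u^{(j)}\|_{*} \ge D_0$ by Lemma~\ref{Ch2finite}; the shifts $t_k^{(j)}\in\ZZ$ may (following the proof of Lemma~\ref{Ch2ccrit}) be taken as integer multiples of $T$, and $|t_k^{(i)}-t_k^{(j)}|\to\infty$ for $i\neq j$.

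Next, introduce the tail form
$$E_N(u):=\sum_{t\in\ZZ\setminus[-N,N]}\bigl(|\De u(t-1)|^2+\lan u(t),L(t)u(t)\ran\bigr).$$
For each $j$, since $t_k^{(j)}\in T\ZZ$ and $L$ is $T$-periodic, the change of variable $s=t+t_k^{(j)}$ gives
$$E_N\bigl(u^{(j)}(\cdot + t_k^{(j)})\bigr)=\sum_{s\in\ZZ\setminus[t_k^{(j)}-N,\,t_k^{(j)}+N]}\bigl(|\De u^{(j)}(s-1)|^2+\lan u^{(j)}(s),L(s)u^{(j)}(s)\ran\bigr),$$
and because $u^{(j)}\in l^2$, this last sum tends to $\|u^{(j)}\|_{*}^2\ge D_0^2$ whenever $|t_k^{(j)}|\to\infty$. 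Expanding $E_N(u_k)$, the bilinear terms coming from distinct translates vanish in the limit by the separation $|t_k^{(i)}-t_k^{(j)}|\to\infty$ (adapting the idea of Lemma~\ref{Ch2sumc} to the fixed outer window), and $r_k$ contributes $o(1)$ since $E_N(\cdot)\le C\|\cdot\|_{*}^2$. Therefore
$$E_N(u_k)=\sum_{j=1}^{m}E_N\bigl(u^{(j)}(\cdot+t_k^{(j)})\bigr)+o(1).$$

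Finally, let $J_\infty:=\{j:\,|t_k^{(j)}|\to\infty\}$. The identity above together with the hypothesis $E_N(u_k)<D_0^2/4$ yields, in the limit,
$$|J_\infty|\,D_0^2\le D_0^2/4,$$
so $J_\infty=\emptyset$, i.e.\ every $t_k^{(j)}$ is bounded. But the pairwise separation $|t_k^{(i)}-t_k^{(j)}|\to\infty$ forbids two bounded shifts, so $m\le 1$. If $m=0$, then $u_k\to 0$ in $l^2$; if $m=1$, I pass to a further subsequence on which the integer sequence $t_k^{(1)}$ is constant, equal to some $t^{(1)}$, and conclude $u_k\to u^{(1)}(\cdot+t^{(1)})$ in $l^2$ via \eqref{Ch2b2}. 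The main technical point will be the cross-term estimate: because $E_N$ is not literally $\|\chi_{\ZZ\setminus[-N,N]}\cdot\|_{*}^2$ (cf.\ the boundary artifacts in \eqref{Ch2f5}), one must use the $l^2$-decay of each $u^{(j)}$ outside the window centered at $-t_k^{(j)}$ to control both the boundary contributions at $\pm N$ and the bilinear mixings between well-separated translates.
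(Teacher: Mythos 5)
Your argument is correct and is essentially the route the paper intends: both reduce to the profile decomposition of Lemma~\ref{Ch2ccrit} (as in Lemma~\ref{Ch2diamconv}) and observe that any bump with an escaping shift forces the tail quantity to exceed $D_0^2-o(1)$, contradicting the bound $D_0^2/4$, so no bump can escape and at most one bump survives. The only cosmetic differences are that you should first dispose of the case $\lim f(u_k)=0$ (where \eqref{Ch2c7} gives $u_k\to0$, since Lemma~\ref{Ch2ccrit} assumes $C>0$), and you should pass to a further subsequence so that each $t_k^{(j)}$ either tends to infinity or is eventually constant before defining $J_\infty$; the cross-term worry you flag at the end is not a real obstruction, since the tail form is a restriction of $\lan\cdot,\cdot\ran_*$ and inherits the Cauchy--Schwarz and decay estimates needed.
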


 \begin{proof}
There is a subsequence of $\{u_k\}^{\infty}_{k=1}\subset l^2$ which is uniformly convergent over the interval $[-N,N]$. This, together with the method used in the proof of Lemma \ref{Ch2diamconv}, implies that there is a convergent subsequence.
\end{proof}

As in \cite{CalMon, Sere}, we introduce the following notations:
$$\mathcal{S}^{b}_{PS}:=\{\{u_k\}\subset l^2:\ \lim_{k\to\infty}D_*f(u_k)=0,\ \limsup_{k\to\infty}f(u_k)\leq b\},$$
$$\Phi^b:=\{l\in\RR:\ \mbox{there exists}\ \{u_k\}\subset\mathcal{S}^b_{PS}\ \mbox{such that}\ \lim_{k\to\infty}f(u_k)=l\},$$
$$D^b:=\{r\in\RR:\ \mbox{there exist}\ \{u_k\},\{u'_k\}\in\mathcal{S}^b_{PS}\ \mbox{such that}\ \lim_{k\to\infty}\|u_k-u'_k\|_*=r\}.$$
For any real numbers $a$ and $b$ with $a\leq b$, set
$$f_a:=f^{-1}([a,+\infty)),\ f^b:=f^{-1}((-\infty,b]),\ f^b_a:=f^{-1}([a,b]),$$
$$\mathcal{C}_a:=\mathcal{C}\cap f_a,\ \mathcal{C}^b:=\mathcal{C}\cap f^b,\ \mathcal{C}(b):=\mathcal{C}\cap f^b_b.$$
Given any non-empty subset $S\subset E$, for any $r>0, r_2>r_1\geq0$, set
$$\mathcal{B}_r(S):=\{u\in l^2:\ \mbox{dist}(u,S)<r\},\ \mathcal{A}_{r_1,r_2}(S):=\{u\in l^2:\ r_1<\mbox{dist}(u,S)<r_2\}.$$

\begin{lemma}  \label{Ch2cptd}
For any $b\geq0$, the set $\Phi^b$ and $D^b$ are compact subsets of $\RR$.
\end{lemma}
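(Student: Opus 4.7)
The plan is to prove compactness by establishing boundedness and closedness of each set separately, and then invoking the Heine--Borel theorem.

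\textbf{Boundedness.} Any sequence $\{u_k\}\in\mathcal{S}^b_{PS}$ satisfies $\|D_*f(u_k)\|_*\to 0$ and $\limsup_{k\to\infty}f(u_k)\leq b$. Substituting these two pieces of information into the coercive inequality \eqref{Ch2c7} that was derived in the proof of Lemma \ref{Ch2psbou} produces a uniform bound $\|u_k\|_*\leq M_b$ for all sufficiently large $k$, where $M_b$ depends only on $b$ and the constants $\al,\be$ from (A4). Feeding this bound back into \eqref{Ch2c7} together with $\|D_*f(u_k)\|_*\to 0$ also yields $\liminf_{k\to\infty}f(u_k)\geq 0$. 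Consequently $\Phi^b\subset[0,b]$ and, by the triangle inequality, $D^b\subset[0,2M_b]$.

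\textbf{Closedness.} Both sets are closed by a standard diagonal extraction. Given a convergent sequence $l_n\to l$ in $\Phi^b$, for each $n$ choose an approximating sequence $\{u^n_k\}_{k\geq 1}\in\mathcal{S}^b_{PS}$ with $\lim_k f(u^n_k)=l_n$, then pick an index $k_n$ so large that
$$|f(u^n_{k_n})-l_n|<1/n,\qquad \|D_*f(u^n_{k_n})\|_*<1/n,\qquad f(u^n_{k_n})\leq b+1/n.$$
The diagonal sequence $v_n:=u^n_{k_n}$ then satisfies $\|D_*f(v_n)\|_*\to 0$, $\limsup f(v_n)\leq b$, and $f(v_n)\to l$, so $\{v_n\}\in\mathcal{S}^b_{PS}$ and $l\in\Phi^b$.

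For $D^b$, the identical scheme applies to pairs of sequences: given $r_n\to r$ in $D^b$, choose witnessing pairs $\{u^n_k\},\{w^n_k\}\in\mathcal{S}^b_{PS}$ with $\|u^n_k-w^n_k\|_*\to r_n$, and select a single index $k_n$ for which both sequences meet the analogous three conditions (on $f$ and $D_*f$) together with $|\|u^n_{k_n}-w^n_{k_n}\|_*-r_n|<1/n$. The two diagonal sequences produced this way witness $r\in D^b$.

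I expect no serious obstacle; the argument is essentially bookkeeping over a diagonal subsequence, and the only substantive input is the estimate \eqref{Ch2c7}, which forces uniform $\|\cdot\|_*$-boundedness along any sequence in $\mathcal{S}^b_{PS}$. The one point to be mildly careful about is that $\mathcal{S}^b_{PS}$ is defined via $\limsup$ rather than a strict upper bound, so when verifying that the diagonal $\{v_n\}$ belongs to $\mathcal{S}^b_{PS}$ one must note that $f(v_n)\leq b+1/n$ gives $\limsup f(v_n)\leq b$ as required.
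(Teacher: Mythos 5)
Your proposal is correct and takes essentially the same approach as the paper: boundedness of any $\mathcal{S}^b_{PS}$-sequence via \eqref{Ch2c7}, and closedness via a diagonal extraction (the paper phrases the closedness step contrapositively, as a contradiction, but the diagonal construction is the same).
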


 \begin{proof}
By \eqref{Ch2c7}, any sequence $\{u_k\}^{\infty}_{k=1}\subset\mathcal{S}^{b}_{PS}$ is bounded, which implies that $D^b$ is bounded, and the bound is dependent on the constant $b$. Hence, we need to show that $D^b$ is closed. Next, it is to show that for any $r\not\in D^b$, there is $\varrho>0$ such that $(r-\varrho,r+\varrho)\cap D^b=\emptyset$. By contradiction, suppose that there are $r\not\in D^b$ and a sequence $r_k\in(r-1/k,r+1/k)\cap D^b$.\ Then, there exist sequences $\{u_k\}^{\infty}_{k=1},\{u'_k\}^{\infty}_{k=1}\subset l^2$\ such that\ $\|D_*f(u_k)\|_{*}<1/k$,\ $\|D_*f(u'_k)\|_{*}<1/k$,\
$f(u_k)\leq b+1/k$, $f(u'_k)\leq b+1/k$, and $|\|u_k-u'_k\|_{*}-r_k|<1/k$, which implies that $\{u_k\}^{\infty}_{k=1},\{u'_k\}^{\infty}_{k=1}\subset \mathcal{S}^{b}_{PS}$, and $\|u_k-u'_k\|_{*}\to r$ as $k\to\infty$. Hence, $r\in D^b$. This is a contradiction. Similarly, we could show that $\Phi^b$ is compact.
\end{proof}

\begin{lemma} \label{Ch2diffposi}
Given $b>0$, for any $r\in[0,\infty)\setminus D^b$, there exists $d_r\in(0,r/3)$ such that $[r-3d_r,r+3d_r]\subset[0,\infty)\setminus D^b$ and
there is $\mu_r>0$ such that $\|D_*f(u)\|_*\geq\mu_r$ for any $u\in\mathcal{A}_{r-3d_r,r+3d_r}(\mathcal{C}^b)\cap f^b$.
\end{lemma}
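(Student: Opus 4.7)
The plan is to first locate $d_r$ using the compactness of $D^b$ (Lemma \ref{Ch2cptd}), and then to obtain the uniform gradient lower bound by a contradiction argument that feeds an approximate nearest critical point back into the very definition of $D^b$.

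\textbf{Step 1: choosing $d_r$.} Since $r>0$ and, by Lemma \ref{Ch2cptd}, $D^b$ is closed in $\mathbb{R}$, the distance $\mathrm{dist}(r,D^b)$ is strictly positive. I pick $d_r\in(0,r/3)$ so small that $3d_r<\mathrm{dist}(r,D^b)$. Then $[r-3d_r,r+3d_r]$ is disjoint from $D^b$, and since $d_r<r/3$ we have $r-3d_r>0$, giving $[r-3d_r,r+3d_r]\subset[0,\infty)\setminus D^b$.

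\textbf{Step 2: the gradient bound by contradiction.} Suppose no such $\mu_r$ exists. Then I can find a sequence $\{u_k\}\subset \mathcal{A}_{r-3d_r,r+3d_r}(\mathcal{C}^b)\cap f^b$ with $\|D_*f(u_k)\|_*\to 0$. Since $f(u_k)\le b$, this gives $\{u_k\}\in\mathcal{S}^b_{PS}$. By definition of the annulus, $\mathrm{dist}(u_k,\mathcal{C}^b)\in(r-3d_r,r+3d_r)$, so for each $k$ I choose $v_k\in\mathcal{C}^b$ with $\|u_k-v_k\|_*<\mathrm{dist}(u_k,\mathcal{C}^b)+1/k$. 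Consequently
\[
r-3d_r<\|u_k-v_k\|_*<r+3d_r+1/k.
\]

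\textbf{Step 3: reaching the contradiction.} The sequence $\{v_k\}$ also lies in $\mathcal{S}^b_{PS}$, because each $v_k\in\mathcal{C}^b$ is a critical point with $D_*f(v_k)=0$ and $f(v_k)\le b$. Passing to a subsequence, $\|u_k-v_k\|_*\to r'$ for some $r'\in[r-3d_r,r+3d_r]$, and by the definition of $D^b$ this forces $r'\in D^b$, contradicting $[r-3d_r,r+3d_r]\cap D^b=\emptyset$.

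The only subtle point is the choice of the nearest critical point $v_k$: it must approximate the infimum defining $\mathrm{dist}(u_k,\mathcal{C}^b)$ closely enough that the actual distances $\|u_k-v_k\|_*$ still essentially lie in $(r-3d_r,r+3d_r)$; the $1/k$ slack handles this and vanishes in the limit. Once this is in hand the argument is entirely soft — compactness of $D^b$ supplies the gap around $r$, and the fact that critical points trivially give PS sequences with zero derivative supplies the other half of the pair required by the definition of $D^b$.
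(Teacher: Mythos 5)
Your proof is correct and follows essentially the same route as the paper's: use compactness of $D^b$ (Lemma \ref{Ch2cptd}) to pick $d_r$, then argue by contradiction, pairing the failing sequence $\{u_k\}$ with approximate nearest points $v_k\in\mathcal{C}^b$ to produce two sequences in $\mathcal{S}^b_{PS}$ whose mutual distance converges to a value in $[r-3d_r,r+3d_r]$, contradicting that this interval avoids $D^b$. The only difference is the explicit $1/k$ slack in choosing $v_k$, which is a small precision improvement over the paper's (slightly loose) assertion that $v_k$ can be chosen with $\|u_k-v_k\|_*\leq r+3d_r$ exactly; your version handles the case where the distance infimum is not attained and is therefore cleaner.
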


 \begin{proof}
It follows from Lemma \ref{Ch2cptd} that $(0,\infty)\setminus D^b$ is open, the existence of $d_r$ can be derived from this fact. Next, it is to show the existence of $\mu_r$ by contradiction.
Suppose that there is $\{u_k\}^{\infty}_{k=1}\subset \mathcal{A}_{r-3d_r,r+3d_r}(\mathcal{C}^b)\cap f^b$\ such that\ $D_*f(u_k)\to0$.\ Further, for any $k$, there is
$v_k\in\mathcal{C}^b$ such that\ $r-3d_r\leq\|u_k-v_k\|_{*}\leq r+3d_r$.\ Hence,\ $\{u_k\}^{\infty}_{k=1}, \{v_k\}^{\infty}_{k=1}\subset \mathcal{S}^{b}_{PS}$.
Hence, there exist subsequences (without loss of generality, assume that the subsequences are $\{u_k\}^{\infty}_{k=1}, \{v_k\}^{\infty}_{k=1}$) such that $\|u_k-v_k\|_{*}\to\bar{r}\in[r-3d_r,r+3d_r]$.\ Hence,\ $\bar{r}\in D^b$,\ $[r-3d_r,r+3d_r]\cap D^b\neq\emptyset$.\ This is a contradiction.
\end{proof}

\begin{remark} \label{Ch2k40}
Using the method used in the proof of Lemma \ref{Ch2diffposi}, we could show that: for any given $b>0$ and any $r\in(0,\infty)\setminus D^b$, there are $d_r\in(0,r/3)$\ such that\ $[r-3d_r,r+3d_r]\subset[0,\infty)\setminus D^b$ and $\mu'_r>0$\ such that for any $u\in\mathcal{A'}_{r-3d_r,r+3d_r}(\mathcal{C}^b)\cap f^b$,\ $\|D_*f(u)\|_{*}\geq\mu'_r$,\ where
\beqq
\mathcal{A'}_{r-3d_r,r+3d_r}(\mathcal{C}^b)=\{u\in l^2:\ \exists w\in\mathcal{C}^b\ \mbox{such that}\ r-3d_r<\|u-w\|_{*}<r+3d_r\}.
\eeqq
\end{remark}

\begin{lemma} \label{Ch2diffposi1}
Given $b>0$, for any $r\in[0,\infty)\setminus \Phi^b$, there exists $\rho_r>0$ such that $[r-\rho_r,r+\rho_r]\subset[0,\infty)\setminus \Phi^b$ and there exists $\nu>0$ such that $\|D_*f(u)\|_*\geq\nu$ for any $u\in f^{r+\rho_r}_{r-\rho_r}\cap f^{b}$.
\end{lemma}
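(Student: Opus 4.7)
The plan is to mirror the strategy of Lemma \ref{Ch2diffposi}, using the compactness of $\Phi^b$ from Lemma \ref{Ch2cptd} for the first assertion and arguing by contradiction for the lower bound on $\|D_*f\|_*$.

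First, since Lemma \ref{Ch2cptd} gives that $\Phi^b$ is a compact subset of $\RR$, its complement is open. Hence for any $r\in[0,\infty)\setminus\Phi^b$ there exists $\rho_r>0$ (taken small enough that $r-\rho_r\geq 0$, which is possible whenever $r>0$; the boundary case $r=0$ is handled by picking $\rho_r$ so small that $[0,\rho_r]\cap\Phi^b=\emptyset$) such that
\[
[r-\rho_r,\,r+\rho_r]\subset[0,\infty)\setminus\Phi^b.
\]

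Next, I would establish the existence of $\nu>0$ by contradiction. Suppose no such $\nu$ exists. Then one can choose a sequence $\{u_k\}^{\infty}_{k=1}\subset f^{r+\rho_r}_{r-\rho_r}\cap f^b$ with $\|D_*f(u_k)\|_*\to 0$ as $k\to\infty$. Because $u_k\in f^b$, we have $f(u_k)\leq b$ for every $k$, so $\limsup_{k\to\infty}f(u_k)\leq b$, and combined with $D_*f(u_k)\to 0$ this shows $\{u_k\}\in\mathcal{S}^{b}_{PS}$.

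Since the values $f(u_k)$ all lie in the compact interval $[r-\rho_r,\,\min\{r+\rho_r,b\}]$, we may pass to a subsequence along which $f(u_k)\to l$ for some $l\in[r-\rho_r,r+\rho_r]$. By the definition of $\Phi^b$ this forces $l\in\Phi^b$, contradicting the choice of $\rho_r$ guaranteeing $[r-\rho_r,r+\rho_r]\cap\Phi^b=\emptyset$. This contradiction yields the desired $\nu>0$.

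There is really no serious obstacle here: the proof is essentially a bookkeeping exercise once one has the compactness of $\Phi^b$ from Lemma \ref{Ch2cptd}, and it is the direct analog of Lemma \ref{Ch2diffposi} (in which $D^b$ played the role of $\Phi^b$ and the distance function played the role of $f$). The only small technical point to watch is the handling of the endpoint $r=0$ to ensure $\rho_r$ is chosen so that $[r-\rho_r,r+\rho_r]$ stays inside $[0,\infty)$ while still avoiding $\Phi^b$.
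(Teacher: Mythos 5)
Your proposal is correct and follows precisely the route the paper intends: the paper simply states that the method is the same as in Lemma~\ref{Ch2diffposi}, and your argument is the direct analog, replacing the set $D^b$ with $\Phi^b$ and the distances $\|u_k-v_k\|_*$ with the functional values $f(u_k)$, then invoking compactness from Lemma~\ref{Ch2cptd} and a contradiction via the definition of $\mathcal{S}^b_{PS}$.
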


 \begin{proof}
The method used here is the same as that used in the proof of Lemma \ref{Ch2diffposi}, the proof is omitted here.
\end{proof}

\begin{lemma} \label{Ch2diffsum}
Any number $r\in D^b$ can be represented as follows
$$r=\bigg(\sum^k_{j=1}\|v_j(\cdot-t_j)-\bar{v}_j\|_*^2\bigg)^{1/2},$$
where $t_j$ are integers, $v_j, \bar{v}_j\in\mathcal{C}\cup\{0\}$, $1\leq j\leq k$, and $\sum^{k}_{j=1}f(v_j)\leq b$, $\sum^{k}_{j=1}f(\bar{v}_j)\leq b$.
\end{lemma}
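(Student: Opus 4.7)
\emph{Plan.} The plan is to extract critical--point decompositions of two witnessing Palais--Smale sequences via Lemma \ref{Ch2ccrit}, pair up their translates through a diagonal subsequence argument, and then evaluate $\lim_{n\to\infty}\|u_n-u'_n\|_*^2$ by invoking the near--orthogonality of sufficiently separated translates supplied by Lemma \ref{Ch2sumc}.

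First, by the definition of $D^b$, fix sequences $\{u_n\},\{u'_n\}\subset \mathcal{S}^b_{PS}$ with $\|u_n-u'_n\|_*\to r$. Passing to a subsequence, both $f(u_n)$ and $f(u'_n)$ converge in $[0,b]$ (lower bound by Lemma \ref{Ch2psbou}). When $\lim f(u_n)=0$, the inequality \eqref{Ch2c7} together with boundedness and $D_*f(u_n)\to 0$ forces $\|u_n\|_*\to 0$; when the limit is strictly positive, Lemma \ref{Ch2ccrit} applies, and analogously for $u'_n$. In either case, along a common subsequence,
\[
u_n=\sum_{i=1}^{m}v_i(\cdot-a^i_n)+w_n,\qquad u'_n=\sum_{j=1}^{m'}\bar v_j(\cdot-b^j_n)+w'_n,
\]
with $v_i,\bar v_j\in\mathcal{C}$ (the sum is empty when the corresponding limit of $f$ is zero), $\|w_n\|_*+\|w'_n\|_*\to 0$, $|a^i_n-a^{i'}_n|,|b^j_n-b^{j'}_n|\to\infty$ for distinct indices, and $\sum_i f(v_i)\le b$, $\sum_j f(\bar v_j)\le b$.

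Next I pair translates. For every $(i,j)$ the integer sequence $a^i_n-b^j_n$, after a further subsequence, either diverges in absolute value or is eventually constant equal to some $\tau_{ij}\in\ZZ$. The internal separations within each family force the coalescing relation to be an injective partial matching. Relabel the matched pairs as $(v_j,\bar v_j)$ for $j=1,\dots,k_0$ with fixed offset $t_j:=\tau_{ij}\in\ZZ$, and append one pair of the form $(v_i,0)$ or $(0,\bar v_j)$, with $t_j:=0$, for each unmatched index; let $k$ denote the total count. Choosing $\sigma^j_n$ as the primed centre for matched terms and as $a^i_n$ or $b^j_n$ for unmatched ones, one rewrites
\[
u_n-u'_n=\sum_{j=1}^{k}\bigl(v_j(\cdot-t_j)-\bar v_j\bigr)(\cdot-\sigma^j_n)+o(1),
\]
with $|\sigma^j_n-\sigma^{j'}_n|\to\infty$ for $j\neq j'$, the last separation coming from the injectivity of the matching combined with the internal separations of $a^i_n$ and $b^j_n$.

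Finally, Lemma \ref{Ch2sumc} applied to the fixed $l^2$ elements $v_j(\cdot-t_j)-\bar v_j$ yields
\[
r^2=\lim_{n\to\infty}\|u_n-u'_n\|_*^2=\sum_{j=1}^{k}\|v_j(\cdot-t_j)-\bar v_j\|_*^2,
\]
which is the stated representation. The main obstacle is the pairing step: verifying that the coalescing relation is indeed injective, correctly grouping bounded--offset pairs of translates into single difference terms, and checking that after this grouping the remaining reference positions $\sigma^j_n$ still separate so that Lemma \ref{Ch2sumc} applies. Once this bookkeeping is complete, the orthogonality of widely separated translates closes the argument.
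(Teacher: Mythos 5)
Your proposal is correct and takes essentially the same approach as the paper: extract critical-point decompositions of the two witnessing Palais--Smale sequences via Lemma \ref{Ch2ccrit}, pass to subsequences so that each pair of translation offsets either separates to infinity or stabilizes to a fixed integer, group the stabilized pairs into single difference terms with the at-most-one-match observation providing injectivity, and invoke Lemma \ref{Ch2sumc} to evaluate the squared norm of the difference as a sum of squared norms. The only cosmetic difference is that you handle the $\lim f(u_n)=0$ endpoint explicitly and spell out the diagonal-subsequence bookkeeping in more detail, both of which the paper compresses into a ``without loss of generality.''
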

\begin{remark}
If $T=1$, it is easy to show that $t_j=0$, $1\leq j\leq k$.
\end{remark}

 \begin{proof}
For any $r\in D^b$, by Lemma \ref{Ch2ccrit}, there are two sequences $\{u_k\}^{\infty}_{k=1},\{u'_k\}^{\infty}_{k=1}\in\mathcal{S}^{b}_{PS}$ such that $\|u_k-u'_k\|_{*}\to r$. Without loss of generality, assume that
\beqq
u_k=v_0+\sum^p_{j=1}v_j(\cdot-t^j_k),\ u'_k=\sum^{p+h}_{j=p+1}v_j(\cdot-t^j_k)+v_{h+p+1},
\eeqq
where\ $h,p$ are non-negative integers, if $1\leq i<j\leq p$ or $p+1\leq i<j\leq p+h$, $|t^i_k-t^j_k|\to\infty$ as $k\to\infty$, which implies that for any $j\in\{1,...,p\}$, there is at most one $l=l(j)\in\{p+1,...,p+h\}$ such that $\sup_k|t^j_k-t^l_k|<\infty$. For this situation, without loss of generality, assume that
$t^j_k-t^l_k=t^j$ and let $v^1_j:=v_j$, $v^2_j:=v_{l(j)}$. For other situation, let $l(j):=-1$, $v^1_j:=v_j$, $v^2_j:=0$,\ $t^j:=0$.\

Hence, for any $j\in\{p+1,...,p+h\}\setminus\{l(1),...,l(p)\}$, if $i\in\{1,...,p+h\}\setminus\{j\}$, then $|t^i_k-t^j_k|\to\infty$ as $k\to\infty$.
For these $j$, let $v^1_j:=0$,\ $v^2_j:=v_j$. For $j\in\{p+1,...,p+h\}\setminus\{l(1),...,l(p)\}$, let $v^1_j:=v^2_j:=0$. Finally, let $v^1_0:=v_0$,\
$v^2_0:=v_{h+p+1}$.

Hence,
\beqq
\sum^{p+h}_{j=0}f(v^1_j)=\sum^p_{j=0}f(v_j)=\lim_{k\to\infty}f(u_k),\
\sum^{p+h}_{j=0}f(v^2_j)=\sum^{p+h}_{j=p+1}f(v_j)=\lim_{k\to\infty}f(u'_k).
\eeqq
\beqq
u_k-u'_k=v^1_0-v^2_0+\sum^{p+h}_{j=1}[v^1_j(\cdot-t^j_k-t^j)-v^2_j(\cdot-t^j_k)].
\eeqq
By Lemma \ref{Ch2sumc}, one has
$$r^2=\lim_{k\to\infty}\|u_k-u'_k\|_{*}^2=\sum^{p+h}_{j=0}\|v^1_j(\cdot-t^j)-v^2_j\|_{*}^2.$$

\end{proof}

By Lemma \ref{Ch2ccrit}, we have
\begin{lemma} \label{Ch2zhisum}
$\Phi^b=\{\sum f(v_i):\ v_i\in\mathcal{C}\}\cap[0,b]$.
\end{lemma}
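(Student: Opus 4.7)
The plan is to establish the two inclusions of the set equality separately. For the forward inclusion $\Phi^b\subseteq\{\sum f(v_i):v_i\in\mathcal{C}\}\cap[0,b]$, I take $l\in\Phi^b$ witnessed by a sequence $\{u_k\}\subset\mathcal{S}^b_{PS}$ with $f(u_k)\to l$. Lemma \ref{Ch2psbou} gives $l\geq0$ and the definition of $\mathcal{S}^b_{PS}$ forces $l\leq b$. If $l=0$, the conclusion holds with the empty sum (equivalently, one verifies directly from Lemma \ref{Ch2psbou} and Step 1 of the proof of Lemma \ref{Ch2ccrit} that no nontrivial bubble can appear). If $l>0$, I invoke Lemma \ref{Ch2ccrit}, which furnishes critical points $u^{(1)},\ldots,u^{(m)}$ of $f$, each nonzero by the concentration/dichotomy construction, such that $l=\sum_{j=1}^m f(u^{(j)})$; each $u^{(j)}$ lies in $\mathcal{C}$, so $l$ has the desired form.

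For the reverse inclusion, I fix $l=\sum_{i=1}^k f(v_i)$ with $v_i\in\mathcal{C}$ and $l\in[0,b]$, and I realize $l$ as a PS limit by superposing widely separated translates. Choose integer sequences $\{t^i_n\}_{n\geq1}$ consisting of multiples of the period $T$ such that $|t^i_n-t^j_n|\to\infty$ as $n\to\infty$ for every $i\neq j$, and set
$$u_n:=\sum_{i=1}^k v_i(\cdot-t^i_n).$$
Since $L$ and $V$ are $T$-periodic, each translate $v_i(\cdot-t^i_n)$ is still a critical point of $f$ at the same energy. Lemma \ref{Ch2sumc} yields $\|u_n\|_*^2\to\sum_i\|v_i\|_*^2$. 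Because every $v_i\in l^2$ decays at infinity, the essential supports of the translates become disjoint for large $n$, and a dominated-convergence argument using $|V(t,x)|\lesssim|x|^2$ near zero together with the $l^2$-tails of each $v_i$ gives $g(u_n)\to\sum_i g(v_i)$, hence $f(u_n)\to\sum f(v_i)=l\leq b$. A parallel estimate controls $V'_x(t,u_n(t))-\sum_i V'_x(t,v_i(t-t^i_n))$ via (A3) on the overlap regions and yields $D_*f(u_n)\to 0$, so $\{u_n\}\in\mathcal{S}^b_{PS}$ and $l\in\Phi^b$.

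The main obstacle is purely technical: controlling the cross-interaction terms in both $f(u_n)$ and $D_*f(u_n)$ as the translates separate. This follows the same philosophy as Step 3 of the proof of Lemma \ref{Ch2ccrit}, where a decomposition into disjoint pieces produced $o(1)$ interactions from the boundedness and $l^2$-decay of the sequences. Here the ingredients are the $l^2$-summability of each $v_i$ and the quadratic-near-zero control on $V$ and $V'_x$ from (A3); once the interactions are shown to be $o(1)$, uniformly for test functions of unit norm in the derivative estimate, both limiting statements drop out and the proof concludes.
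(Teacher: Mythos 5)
Your two-inclusion structure is exactly right, and it is more explicit than the paper, which dispatches the lemma with the single line ``By Lemma \ref{Ch2ccrit}, we have.'' Your forward inclusion is precisely what that citation buys: apply the splitting result Lemma \ref{Ch2ccrit} (after handling $l=0$ as the empty sum using $\inf_{v\in\mathcal{C}}f(v)>0$ from Lemma \ref{Ch2finite}), and the conclusion drops out. The reverse inclusion --- constructing a PS sequence from widely separated $T$-periodic translates of critical points --- is the content that the paper's proof leaves entirely implicit, and your treatment of it is sound: Lemma \ref{Ch2sumc} for the quadratic part, (A1) for energy-invariance under $T$-translations, and (A3) together with the $l^2$-tails of each $v_i$ to make the cross-interaction terms in both $g(u_n)$ and the $V'_x$ part of $D_*f(u_n)$ into $o(1)$ errors, uniformly over test vectors of unit norm. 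Your identification of the main technical point (that, since each translated $v_i$ is a genuine critical point, $D_*f(u_n)w$ reduces to the nonlinear remainder $\sum_t\langle\sum_i V'_x(t,v_i(t-t^i_n))-V'_x(t,u_n(t)),w(t)\rangle$, which must be bounded in $l^2$) is correct and mirrors the decomposition logic used in Step 3 of the proof of Lemma \ref{Ch2ccrit}. In short, your proposal is correct and fills in a gap the paper glosses over; the only thing I would add is an explicit remark that the empty sum is allowed in $\{\sum f(v_i)\}$, so that $0\in\Phi^b$ is accounted for on both sides.
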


Now, the critical assumption on multibump solutions is introduced.

\begin{itemize}
\item [(A6)] There is $\kappa^*>\kappa$ such that the set $\mathcal{C}\cap\{u\in l^2:\ f(u)\leq\kappa^*\}$ is countable.
\end{itemize}

\begin{remark} \label{Ch2k42}
If (A6) holds,  it follows from Lemmas \ref{Ch2diffsum} and \ref{Ch2zhisum} that $D^{\kappa^*}$ and $\Phi^{\kappa^*}$ are countable.
\end{remark}

\bigskip

\section{Existence of a local mountain pass type critical point}

In this section, we investigate the properties of the functional $f$ under the action of certain flows and show the existence of a local mountain pass type critical point. In the following discussions, we always assume that (A6) holds. The statements in this section can be derived from the methods used in the proof of Lemmas 4.3--4.7 in \cite{CalMon}. For the convenience of the readers and the completeness of our work, we give the details.

\begin{lemma}\label{Ch2vecfield}
Fix any $b\leq\kappa^*$.\ Suppose (A6) holds. If there exists a local Lipschitz continuous vector field $\mathcal{Y}:l^2\to l^2$ satisfying that
\begin{itemize}
\item $D_*f(u)(\mathcal{Y}(u))\leq0$ for any $u\in l^2$;
\item $\|\mathcal{Y}(u)\|_{*}\leq\frac{2}{\|D_*f(u)\|_{*}}$ for any $u\in l^2\setminus (\mathcal{C}\cup\{0\})$;
\item $\mathcal{Y}(u)=0$ for any $u\in\mathcal{C}^b\cup\{0\}$.\
\end{itemize}
Consider the following initial value problem:
\beqq
\left\{
  \begin{array}{ll}
    \frac{d\eta}{ds}(s,u)=\mathcal{Y}(\eta(s,u)); \\
    \eta(0,u)=u,
  \end{array}
\right.
\eeqq
Then, for any $u\in l^2$, there is a unique local solution $\eta(\cdot,u)$, such that for any $u\in f^{b}$, the solution is continuously dependent on the initial value $u$, and the solution is defined on the whole $\RR^+$. Further, the function $s\to f(\eta(s,u))$ is non-increasing.
\end{lemma}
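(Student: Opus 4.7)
The plan is to dispatch the four conclusions of the lemma in the standard order for deformation-type flows on a Banach space.

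First, because $\mathcal{Y}:l^2\to l^2$ is locally Lipschitz, the Cauchy-Lipschitz theorem on Banach spaces yields, for each $u\in l^2$, a unique solution $\eta(\cdot,u)$ of the initial value problem on a maximal forward interval $[0,T^*(u))$, together with continuous dependence on $u$ throughout the open domain of the flow. The monotonicity of $s\mapsto f(\eta(s,u))$ follows immediately from the first hypothesis on $\mathcal{Y}$ via the chain rule:
$$\frac{d}{ds}f(\eta(s,u))=D_*f(\eta(s,u))[\mathcal{Y}(\eta(s,u))]\leq 0.$$

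The substantive step is global forward existence for $u\in f^b$. Fix such a $u$; by the monotonicity above, $\eta(s,u)\in f^b$ for all $s\in[0,T^*(u))$. Assuming $T^*<+\infty$ for contradiction, I would first rule out escape to infinity: on $f^b$, the Ambrosetti-Rabinowitz-type estimate \eqref{Ch2c7} gives an a priori bound of the form $\|v\|_*\le C_1\|D_*f(v)\|_*+C_2$ with constants depending only on $b,\al,\be$, which combined with $\|\mathcal{Y}(v)\|_*\le 2/\|D_*f(v)\|_*$ yields $\|\mathcal{Y}(\eta(s,u))\|_*\le 2C_1/(\|\eta(s,u)\|_*-C_2)$ once $\|\eta(s,u)\|_*$ is large. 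Inserting this into $\tfrac{d}{ds}\|\eta(s,u)\|_*\le\|\mathcal{Y}(\eta(s,u))\|_*$ and integrating gives at most $\sqrt{s}$-growth of the norm, so finite-time blow-up is impossible.

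Having confined the orbit to a bounded subset of $f^b$, extension past $T^*$ will follow once I show that $\eta(\cdot,u)$ has a limit in $l^2$ as $s\uparrow T^*$, because the third hypothesis on $\mathcal{Y}$ makes such a limit a zero of $\mathcal{Y}$ and local uniqueness then continues the trajectory by the constant solution, contradicting maximality of $T^*$. Integrating the monotonicity identity furnishes $\int_0^{T^*}|D_*f(\eta)[\mathcal{Y}(\eta)]|\,ds<+\infty$, and a standard pseudogradient-flow extraction produces a sequence $s_n\uparrow T^*$ along which $\{\eta(s_n,u)\}$ is a Palais-Smale sequence in $f^b$; Lemma \ref{Ch2ccrit} then yields a subsequential limit $v\in\mathcal{C}^b\cup\{0\}$. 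The hardest part is promoting this subsequential convergence to convergence of the entire orbit, and the key leverage is assumption (A6) together with Remark \ref{Ch2k42}, which make $\Phi^b$ and $D^b$ countable; the continuous $s$-dependence of $f(\eta(s,u))$ and of $\eta(s,u)$ against this discrete backdrop rules out oscillation between distinct components of $\mathcal{C}^b\cup\{0\}$ and forces the full orbit to converge, completing the contradiction and establishing $T^*=+\infty$.
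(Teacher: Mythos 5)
Your opening steps (local existence/uniqueness, continuous dependence, monotonicity of $s\mapsto f(\eta(s,u))$) match the paper exactly. The global existence argument, however, has two genuine gaps that track a real divergence from the paper's method.

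First, your extraction of a Palais--Smale sequence from $\int_0^{T^*}|D_*f(\eta)[\mathcal{Y}(\eta)]|\,ds<\infty$ via ``a standard pseudogradient-flow extraction'' does not follow. The hypotheses on $\mathcal{Y}$ give only $D_*f\cdot\mathcal{Y}\le 0$ and $\|\mathcal{Y}\|_*\le 2/\|D_*f\|_*$; there is no lower bound of pseudogradient type $D_*f\cdot\mathcal{Y}\le -c\|D_*f\|_*^2$, so finiteness of the integral (automatic when $T^*<\infty$) gives no control whatsoever on $\|D_*f\|_*$ along the orbit. The paper's extraction is different and essential: if the solution cannot be continued past $T^*<\infty$, then some $t_n\uparrow T^*$ must have $\|\mathcal{Y}(\eta(t_n,u))\|_*\to\infty$ (otherwise the trajectory is Lipschitz on $[0,T^*)$, hence convergent, hence extendable), and only then does $\|\mathcal{Y}\|_*\le 2/\|D_*f\|_*$ force $\|D_*f(\eta(t_n,u))\|_*\to0$. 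Second, the claim that Lemma \ref{Ch2ccrit} produces a subsequential limit $v\in\mathcal{C}^b\cup\{0\}$ is false: that lemma expresses a PS sequence as a sum of \emph{diverging} translates of critical points and gives no convergent subsequence. The paper in fact argues the exact opposite at this point: the sequence $u_n=\eta(t_n,u)$ can have \emph{no} convergent subsequence, because a limit $w$ would make $\mathcal{Y}(u_{n_k})\to\mathcal{Y}(w)$ finite, contradicting $\|\mathcal{Y}(u_{n_k})\|_*\to\infty$.

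Your final step (``rule out oscillation and force the full orbit to converge'') is therefore aimed at the wrong target and is too vague to be a proof. What the paper does is the reverse: from non-compactness of $\{u_n\}$ extract $\tau>0$ and interlaced subsequences with $\|u_{n_p}-u_{n_q}\|_*\ge\tau$; use (A6) and Remark \ref{Ch2k42} to choose $[r_1,r_2]\subset(0,\tau)\setminus D^b$; by continuity of $s\mapsto\eta(s,u)$ find $s_p<s_q$ at which the orbit exits the annulus $\mathcal{A}_{r_1,r_2}(u_{n_p})$; a mean-value estimate gives a point $s_{p,q}\in[s_p,s_q]$ with $\|\mathcal{Y}(\eta(s_{p,q},u))\|_*\ge(r_2-r_1)/(s_q-s_p)\to\infty$ (all these times compress into $T^*$), so $\{\eta(s_{p,q},u)\}\in\mathcal{S}^b_{PS}$ with $\|\eta(s_{p,q},u)-u_{n_p}\|_*\in[r_1,r_2]$, forcing $[r_1,r_2]\cap D^b\neq\emptyset$ --- contradiction. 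Your preliminary norm-growth estimate via \eqref{Ch2c7} is not needed: once the PS extraction is done correctly, Lemma \ref{Ch2psbou} already bounds the sequence.
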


 \begin{proof}
The existence, uniqueness, and continuous dependence of the solutions can be derived from the assumption that the vector field is local Lipschitz \cite{Deim}. The function $s\to f(\eta(s,u))$ is non-increasing, since
$$\frac{d}{ds}f(\eta(s,u))= D_*f(\eta(s,u))(\mathcal{Y}(\eta(s,u)))\leq0.$$
Next, it is to show that for any $u\in f^{b}$, the corresponding solution is defined on the whole positive real axis. By contradiction, suppose that there is some $u\in f^{b}$ with the maximal solution interval is $[0,t_0)$, where $t_0<\infty$.\ Hence, there exists an increasing sequence
$\{t_n\}^{\infty}_{n=1}\subset[0,t_0)$\ such that\ $t_n\to t_0$ and $\|\mathcal{Y}(\eta(t_n,u))\|_{*}\to\infty$.\ If there does not exist such sequence, then we could extend the solution interval at the point $t_0$, this contradicts the assumption that $[0,t_0)$ is the maximal interval. Set $u_n:=\eta(t_n,u)$, it follows from $\|\mathcal{Y}(u)\|_{*}\leq\frac{2}{\|D_*f(u)\|_{*}}$ and $\|\mathcal{Y}(\eta(t_n,u))\|_{*}\to\infty$ that $D_*f(u_n)\to0$. This, together with the fact $\{t_n\}^{\infty}_{n=1}$ is increasing, yields that $f(u_{n+1})\leq f(u_n)\leq f(u)$. Thus, $\{u_n\}^{\infty}_{n=1}\in \mathcal{S}^{b}_{PS}$.\

Since for any $u\in\mathcal{C}^b\cup\{0\}$,\ $\mathcal{Y}(u)=0$, there does not exist any convergent subsequence of $\{u_n\}^{\infty}_{n=1}$. If this statement is incorrect, then there is a convergent subsequence with the limit $w$, which implies that $\mathcal{Y}(w)=0$. This contradicts  $\|\mathcal{Y}(\eta(t_n,u))\|_{*}\to\infty$. Hence, there are a positive constant $\tau$,\ two subsequences of $\{u_n\}^{\infty}_{n=1}$, $\{u_{n_p}\}^{\infty}_{p=1}$ and $\{u_{n_q}\}^{\infty}_{1=1}$ satisfying that $\|u_{n_p}-u_{n_q}\|_{*}\geq\tau$ and $n_p<n_q<n_{p+1}$.\ This, together with (A6), implies that $[r_1,r_2]\subset\RR^+\setminus D^b$ and $0<r_1<r_2<\tau$.\ Hence, there are two sequences $\{s_p\}^{\infty}_{p=1},\{s_q\}^{\infty}_{q=1}\subset[0,t_0)$ satisfying that
$t_{n_p}\leq s_p<s_q\leq t_{n_q}$\ such that\ $\|\eta(s_p,u)-u_{n_p}\|_{*}=r_1$,\ $\|\eta(s_q,u)-u_{n_p}\|_{*}=r_2$, and for any $s\in(s_p,s_q)$,\ $\eta(s,u)\in\mathcal{A}_{r_1,r_2}(u_{n_p})$,\ yielding that
$$r_2-r_1\leq\int^{s_q}_{s_p}\|\mathcal{Y}(\eta(s,u))\|_{*}ds=(s_q-s_p)\|\mathcal{Y}(\eta(s_{p,q},u))\|_{*},\ s_{p,q}\in[s_p,s_q].$$
This implies that
$$ \|\mathcal{Y}(u_{p,q})\|_{*}\geq\frac{r_2-r_1}{s_q-s_p}\to\infty\ \mbox{as}\ p,q\to\infty,$$
where\ $u_{p,q}=\eta(s_{p,q},u)$.\
Hence,\ $\{u_{p,q}\}\in\mathcal{S}^b_{PS}$ and $r_1\leq\|u_{p,q}-u_{n_p}\|_{*}\leq r_2$, which means that $[r_1,r_2]\cap D^b\neq\emptyset$.\ This is a contradiction. This completes the whole proof.

\end{proof}

Fix any\ $b\in[\kappa,\kappa^*)$,\ $r\in\RR^+\setminus D^{\kappa^*}$, set $h_r:=\frac{1}{4}d_r\mu_r$, where $d_r$ and $\mu_r$ are specified in Lemma \ref{Ch2diffposi}. Set
\beq\label{Ch2b6}
\hat{h}:=\min\{\kappa^*-b,h_r\}.
\eeq

\begin{lemma} \label{Ch2vect2}
Fix any\ $h\in(0,\hat{h})$,\ there exists a continuous function $\eta:f^{b+h}\to f^{b+h}$\ such that
\begin{itemize}
\item [(a1)] $f(\eta(u))\leq f(u)$ for any $u\in f^{b+h}$;
\item [(a2)] if $\eta(u)\not\in\mathcal{B}_r(\mathcal{C}^{b+h}_{b-h})$, then $f(\eta(u))<b-h$;\
\item [(a3)] if $\eta(u)\in\mathcal{A}_{r-d_r,r+d_r}(\mathcal{C}^{b+h}_{b-h})$, then $f(\eta(u))<b-h_r$.
\end{itemize}
\end{lemma}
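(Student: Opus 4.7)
The plan is to build a locally Lipschitz pseudo-gradient vector field $\mathcal{Y}:l^{2}\to l^{2}$ satisfying the hypotheses of Lemma \ref{Ch2vecfield} (applied with the parameter there replaced by $b+h$), integrate it to obtain a global forward flow $\zeta(s,\cdot)$ on $f^{b+h}$, and then define $\eta(u):=\zeta(T,u)$ for a terminal time $T$ chosen in terms of $h$ and $h_{r}$. Clauses (a1)--(a3) will be verified by standard bookkeeping of the $f$-descent along trajectories.

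For the vector field, I pick locally Lipschitz cut-offs $\chi_{1}:\RR\to[0,1]$ and $\chi_{2}:l^{2}\to[0,1]$ with $\chi_{1}(t)=0$ for $t\leq b-h$ and $\chi_{1}(t)=1$ for $t\geq b-h/2$, while $\chi_{2}(u)=0$ on a small neighborhood of $\mathcal{C}^{b+h}\cup\{0\}$ of radius at most $\de<\min\{r/4,d_{r}/4\}$ and $\chi_{2}(u)=1$ outside a slightly larger neighborhood. Setting
\[
\mathcal{Y}(u):=-\chi_{1}(f(u))\,\chi_{2}(u)\,\frac{D_{*}f(u)}{\|D_{*}f(u)\|_{*}^{2}}
\]
on $\{u:D_{*}f(u)\neq 0\}$ and $\mathcal{Y}(u):=0$ elsewhere, the three conditions of Lemma \ref{Ch2vecfield} hold directly: $D_{*}f(u)\mathcal{Y}(u)=-\chi_{1}\chi_{2}\leq 0$; $\|\mathcal{Y}(u)\|_{*}\leq 1/\|D_{*}f(u)\|_{*}\leq 2/\|D_{*}f(u)\|_{*}$ off $\mathcal{C}\cup\{0\}$; and $\mathcal{Y}\equiv 0$ on $\mathcal{C}^{b+h}\cup\{0\}$ by construction of $\chi_{2}$.

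With the flow $\zeta$ in hand, I set $T:=2h_{r}$ (recalling $h_{r}\geq\hat{h}>h$) and $\eta(u):=\zeta(T,u)$. Clause (a1) follows from the monotonicity asserted in Lemma \ref{Ch2vecfield}. For (a2), assume $\eta(u)\notin\mathcal{B}_{r}(\mathcal{C}^{b+h}_{b-h})$; if $f(\zeta(s,u))<b-h$ at some $s\leq T$, monotonicity gives $f(\eta(u))<b-h$ immediately. Otherwise $f(\zeta(\cdot,u))\geq b-h$ throughout, hence $\chi_{1}\equiv 1$ and $\frac{d}{ds}f(\zeta)=-\chi_{2}(\zeta)$, so an overall drop of $T\geq 2h$ is avoided only if $\zeta(\cdot,u)$ spends a long time in $\{\chi_{2}=0\}$, forcing $\eta(u)$ to lie within distance $\de$ of some $w\in\mathcal{C}^{b+h}\cup\{0\}$; the case $f(w)\leq b-h$ gives $f(\eta(u))<b-h$ by continuity of $f$ and smallness of $\de$, while $f(w)\in[b-h,b+h]$ yields $\eta(u)\in\mathcal{B}_{\de}(w)\subseteq\mathcal{B}_{r}(\mathcal{C}^{b+h}_{b-h})$, contradicting the hypothesis. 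For (a3), if $\eta(u)\in\mathcal{A}_{r-d_{r},r+d_{r}}(\mathcal{C}^{b+h}_{b-h})$, then Lemma \ref{Ch2diffposi} gives $\|D_{*}f\|_{*}\geq\mu_{r}$ on the enclosing annulus $\mathcal{A}_{r-3d_{r},r+3d_{r}}(\mathcal{C}^{b})$, so $\|\mathcal{Y}\|_{*}\leq 1/\mu_{r}$ there; a comparison of the unit $f$-descent rate to the spatial travel rate, in the spirit of the estimate inside Lemma \ref{Ch2vecfield}, shows that the trajectory must have spent enough time with $\chi_{2}\equiv 1$ in this annular region to produce an $f$-drop of at least $h_{r}=d_{r}\mu_{r}/4$, giving $f(\eta(u))<b-h_{r}$.

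The main obstacle is the delicate tuning of $\de$ and $T$ to satisfy all three clauses simultaneously, and especially the sharper quantitative bound in (a3). This is precisely where assumption (A6) (through Remark \ref{Ch2k42} and the choice $r\notin D^{\kappa^{*}}$) is indispensable: countability of $D^{\kappa^{*}}$ makes the constants $d_{r}$ and $\mu_{r}$ of Lemma \ref{Ch2diffposi} available, and the defining identity $h_{r}=d_{r}\mu_{r}/4$ is what reconciles the geometric width of the annulus with the analytic gradient bound there. Once $r$, $d_{r}$, $\mu_{r}$ and $h_{r}$ are fixed, the remaining arguments are routine.
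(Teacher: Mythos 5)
The overall strategy (a deformation by the flow of a descent vector field, with terminal time scaled by $h_r = d_r\mu_r/4$) does match the paper. However, there are two concrete gaps.

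\textbf{The cut-off $\chi_1$ does not yield the correct unit descent rate on the relevant strip.} Your vector field has $D_*f(u)\mathcal{Y}(u)=-\chi_1(f(u))\chi_2(u)$, and you have arranged $\chi_1\equiv 1$ only for $f(u)\geq b-h/2$. But in the verification of (a2) you write ``Otherwise $f(\zeta(\cdot,u))\geq b-h$ throughout, hence $\chi_1\equiv 1$''---this implication is false on the sublevel strip $b-h\leq f<b-h/2$, where $\chi_1$ can be arbitrarily close to $0$ and the descent rate correspondingly weak. The paper's argument rests on a vector field satisfying a stronger guarantee, namely $D_*f(u)\mathcal{Y}(u)\leq -1$ on all of $[f^{b+h}_{b-h}\setminus\mathcal{B}_{r-2d_r}(\mathcal{C}^{b+h}_{b-h})]\cup[f^{b+h}\cap\mathcal{A}_{r-2d_r,r+2d_r}(\mathcal{C}^{b+h}_{b-h})]$, i.e.\ unit descent on the \emph{entire} strip $f^{b+h}_{b-h}$ away from a ball around the critical set, not just on the top half of the strip. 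To repair your construction, $\chi_1$ would need to equal $1$ for $f\geq b-h$ (with the ramp placed below $b-h$), not for $f\geq b-h/2$.

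\textbf{The case analysis for (a2) is not correct.} You argue that avoiding a drop of $2h$ forces the flow line to linger in $\{\chi_2=0\}$, and from there conclude that the \emph{endpoint} $\eta(u)$ must lie within $\delta$ of some $w\in\mathcal{C}^{b+h}\cup\{0\}$. But the flow line can dwell near a critical point and then leave the neighborhood well before time $T$; lingering at an intermediate time in no way pins the terminal point near the critical set, so this step does not go through. The mechanism the paper uses is different: either the trajectory never enters $\mathcal{B}_{r-2d_r}(\mathcal{C}^{b+h}_{b-h})$---in which case the unit descent rate yields a drop of at least $T$, which exceeds $2h$---or it does enter that ball, in which case reaching the terminal point outside $\mathcal{B}_r(\mathcal{C}^{b+h}_{b-h})$ forces a crossing of the annulus of width $\geq 2d_r$, and the speed bound $\|\mathcal{Y}\|_*\leq 2/\mu_r$ there makes the crossing time $\geq d_r\mu_r=4h_r>T$, a contradiction. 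Your (a3) argument suffers from the same vagueness: ``the trajectory must have spent enough time with $\chi_2\equiv 1$'' is the conclusion one wants, but you never actually rule out the possibility that the trajectory spends its time lingering in $\{\chi_2=0\}$ and only briefly crosses the annulus at the end. The paper's dichotomy (trajectory confined to $\mathcal{A}_{r-2d_r,r+2d_r}$ throughout, versus a crossing segment of distance $d_r$ entirely within that annulus) is what makes the quantitative bound $f(\eta(u))<b-h_r$ go through, and you will need to reproduce it.

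A third, more minor, issue: the normalized-gradient vector field $D_*f/\|D_*f\|_*^2$ need not be locally Lipschitz when $f$ is merely $C^1$; the standard remedy, which the paper invokes in Section~7, is to replace it by a pseudo-gradient field built via a locally finite partition of unity.
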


 \begin{proof}
Fix any $r$, it follows from Lemma \ref{Ch2diffposi} that for any $u\in\mathcal{A}_{r-3d_r,r+3d_r}(\mathcal{C}^{b+h})\cap f^{b+h}$, one has that $\|D_*f(u)\|_{*}\geq\mu_r$.\ Hence, we could construct a vector field satisfying the assumptions of Lemma \ref{Ch2vecfield} and the following assumptions:
\beq\label{Ch2k1}
D_*f(u)(\mathcal{Y}(u))\leq-1,\ \forall u\in [f^{b+h}_{b-h}\setminus\mathcal{B}_{r-2d_r}(\mathcal{C}^{b+h}_{b-h})]\cup[f^{b+h}\cap\mathcal{A}_{r-2d_r,r+2d_r}(\mathcal{C}^{b+h}_{b-h})].
\eeq
By Lemma \ref{Ch2vecfield}, there is a continuous function $\eta:\RR^+\times f^{b+h}\to f^{b+h}$, which is the solution of the initial value problem with respect to the vector field $\mathcal{Y}$. Set
$\psi(u):=\eta(3h_r,u)$,\ where\ $u\in f^{b+h}$.\ By Lemma \ref{Ch2vecfield}, $f(\psi(u))\leq f(u)$, for any $u\in f^{b+h}$.

Now, it is to  show that the function $\psi$ satisfies (a2). By contradiction, suppose that there is $u\in f^{b+h}$\ such that\ $\psi(u)\in f_{b-h}\setminus\mathcal{B}_r(\mathcal{C}^{b+h}_{b-h})$.\ There are two different situations:
\begin{itemize}
\item [(i)] $\eta(s,u)\not\in\mathcal{B}_{r-2d_r}(\mathcal{C}^{b+h}_{b-h})$ for all $s\in[0,3h_r]$;
\item [(ii)] there is $t_0\in[0,3h_r]$\ such that\ $\eta(t_0,u)\in\mathcal{B}_{r-2d_r}(\mathcal{C}^{b+h}_{b-h})$.
\end{itemize}
For Case (i), it follows from \eqref{Ch2k1} that
\beqq
f(\psi(u))-f(u)=\int^{3h_r}_0 D_*f(\eta(s,u))(\mathcal{Y}(\eta(s,u)))ds\leq-3h_r,
\eeqq
which implies that
\beqq
f(u)\geq f(\psi(u))+3h_r\geq b-h+3h>b+h,
\eeqq
which contradicts the fact $u\in f^{b+h}$.

For Case (ii), since we assume that $\psi(u)\in f_{b-h}\setminus\mathcal{B}_r(\mathcal{C}^{b+h}_{b-h})$, there are $0\leq t_1<t_2\leq 3h_r$ such that
$\eta(t_1,u)\in\mathcal{B}_{r-2d_r}(\mathcal{C}^{b+h}_{b-h})$,\ $\eta(t_2,u)\in\mathcal{B}_{r}(\mathcal{C}^{b+h}_{b-h})$, and for any $t\in(t_1,t_2)$,\
$\eta(t,u)\in\mathcal{A}_{r-2d_r,r}(\mathcal{C}^{b+h}_{b-h})$.\ Hence,
\beqq
2d_r\leq\|\eta(t_2,u)-\eta(t_1,u)\|_{*}\leq\int^{t_2}_{t_1}\|\mathcal{Y}(\eta(t,u))\|_{*}dt\leq\int^{t_2}_{t_1}\frac{2}{\|D_*f(\eta(t,u))\|_{*}}dt.
\eeqq
This, together with Lemma \ref{Ch2diffposi}, implies that,
\beqq
2d_r\leq\frac{2}{\mu_r}(t_2-t_1),
\eeqq
yielding that
\beqq
t_2-t_1\geq 4h_r.
\eeqq
Now, we arrive at a contradiction.

Next, it is to show (a3). We show this by contradiction. Assume that there is $u\in f^{b+h}$ such that $\psi(u)\in\mathcal{A}_{r-d_r,r+d_r}(\mathcal{C}^{b+h}_{b-h})\cap f_{b-h_r}$.\ There exist two different situations:
\begin{itemize}
\item[(i)]  $\eta(t,u)\in\mathcal{A}_{r-2d_r,r+2d_r}(\mathcal{C}^{b+h}_{b-h})$ for all $t\in[0,3h_r]$;
\item[(ii)] there is $t_0\in[0,3h_r]$\ such that\ $\eta(t_0,u)\not\in\mathcal{A}_{r-2d_r,r+2d_r}(\mathcal{C}^{b+h}_{b-h})$.
\end{itemize}
For Case (i), by $\eta(s,u)\in f^{b+h}_{b-h}$,\ $s\in[0,3h_r]$,\ and\ \eqref{Ch2k1},\ one has
\beqq
f(\psi(u))-f(u)=\int^{3h_r}_0 D_*f(\eta(s,u))(\mathcal{Y}(\eta(s,u)))ds\leq-3h_r,
\eeqq
which implies that
\beqq
f(u)\geq f(\psi(u))+3h_r\geq b-h+3h>b+h,
\eeqq
contradicting the fact $u\in f^{b+h}$.

For Case (ii), there exists $0\leq t_1<t_2\leq 3h_r$ satisfying $\|\eta(t_2,u)-\eta(t_1,u)\|_{*}=d_r$,\ $\eta(t,u)\in\mathcal{A}_{r-2d_r,r+2d_r}(\mathcal{C}^{b+h}_{b-h})$,\ where
$t\in(t_1,t_2)$.\ Hence,\
\beqq
d_r=\|\eta(t_2,u)-\eta(t_1,u)\|_{*}\leq\int^{t_2}_{t_1}\|\mathcal{Y}(\eta(t,u))\|_{*}dt\leq\int^{t_2}_{t_1}\frac{2}{\|D_*f(\eta(t,u))\|_{*}}dt,
\eeqq
this, together with Lemma \ref{Ch2diffposi}, yields that
\beqq
d_r\leq\frac{2}{\mu_r}(t_2-t_1),
\eeqq
which implies that
\beqq
t_2-t_1\geq 2h_r.
\eeqq
By \eqref{Ch2k1},
\beqq
\begin{split}
&f(\psi(u))\leq f(\eta(t_2,u))\\
=&f(\eta(t_1,u))+\int^{t_2}_{t_1} D_*f(\eta(t,u))(\mathcal{Y}(\eta(t,u))) dt\leq f(u)-(t_2-t_1),
\end{split}
\eeqq
yielding that
\beqq
f(u)\geq b-h_r+2h_r>b+h.
\eeqq
This contradicts the assumption $u\in f^{b+h}$. This completes the whole proof.
\end{proof}

\begin{lemma} \label{Ch2path2}
For any $h\in(0,\hat{h})$, there exist a continuous path $\ga\in\mathcal{K}$ and finite number critical points $v_1$,...,$v_q\in\mathcal{C}^{\kappa+h}_{\kappa-h}$, these critical points are dependent on $h$\ and\ $\ga$, such that,
\begin{itemize}
\item [(b1)]\ $\max_{t\in[0,1]}f(\ga(t))<\kappa+h$;
\item [(b2)] if $f(\ga(s))\geq \kappa-h$, then $\ga(s)\in\cup^q_{j=1}\mathcal{B}_r(v_j)$;
\item [(b3)] if $\ga(s)\in\cup^q_{j=1}\mathcal{A}_{r-d_r,r+d_r}(v_j)$, then $f(\ga(s))<\kappa-h_r$,
\end{itemize}
where\ $\hat{h}=\min\{\kappa^*-\kappa,h_r\}$,\ $\mathcal{K}$ is specified in \eqref{Ch2path1}.
\end{lemma}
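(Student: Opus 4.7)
The plan is to produce $\gamma$ by deforming a near-optimal path $\gamma_0$ for the mountain pass level $\kappa$ via the flow map $\psi$ from Lemma~\ref{Ch2vect2} (applied with $b=\kappa$), and then extract $v_1,\dots,v_q$ from the countable set $\mathcal{C}^{\kappa+h}_{\kappa-h}$ by a compactness argument on $[0,1]$. The restriction $h<\hat{h}=\min\{\kappa^*-\kappa,h_r\}$ ensures that $b=\kappa$ is admissible in Lemma~\ref{Ch2vect2} and that $f(w)$ lies strictly below $\kappa-h$, which will be important for controlling the endpoint.

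First, I would use $\kappa=\inf_{\gamma\in\mathcal{K}}\sup_t f(\gamma(t))$ to choose $\gamma_0\in\mathcal{K}$ with $\sup_t f(\gamma_0(t))<\kappa+h$, so that $\gamma_0([0,1])\subset f^{\kappa+h}$. Property (a1) of Lemma~\ref{Ch2vect2} then yields $f(\psi(\gamma_0(t)))\leq f(\gamma_0(t))<\kappa+h$, giving (b1) for the composition. The subtle point is that $\psi\circ\gamma_0$ need not belong to $\mathcal{K}$: since the vector field vanishes at $0$, the flow fixes the origin and $\psi(0)=0$, but $\psi(w)$ is generally distinct from $w$. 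To repair this I would use that along the flow line $s\mapsto\eta(s,w)$ the energy $f$ is non-increasing, so $f(\eta(s,w))\leq f(w)<0<\kappa-h$ for all $s\in[0,3h_r]$. I would then reparametrize $\psi\circ\gamma_0$ onto $[0,1/2]$ and concatenate with the reverse flow segment $t\mapsto\eta((2-2t)\cdot 3h_r,w)$ on $[1/2,1]$, producing a continuous $\gamma\in\mathcal{K}$ whose energy stays below $\kappa+h$ throughout and whose second half sits below the level $\kappa-h$.

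Next I would exploit compactness to produce $v_1,\dots,v_q$. The set $K:=\{t\in[0,1]:f(\gamma(t))\geq\kappa-h\}$ is closed, hence compact, and the estimate on the appended segment forces $K\subset[0,1/2]$. For each $t\in K$, property (a2) of Lemma~\ref{Ch2vect2} supplies some $v(t)\in\mathcal{C}^{\kappa+h}_{\kappa-h}$ with $\|\gamma(t)-v(t)\|_*<r$; the open sets $\gamma^{-1}(\mathcal{B}_r(v(t)))$ then cover the compact set $K$, and a finite subcover yields $v_1,\dots,v_q\in\mathcal{C}^{\kappa+h}_{\kappa-h}$ satisfying (b2). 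For (b3), if $\gamma(s)\in\mathcal{A}_{r-d_r,r+d_r}(v_j)$ for some $j$, then either $s\in[0,1/2]$ and (a3) gives $f(\gamma(s))<\kappa-h_r$, or $s\in(1/2,1]$ in which case $f(\gamma(s))<0<\kappa-h_r$ trivially.

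The main obstacle is the endpoint repair: Lemma~\ref{Ch2vect2} only guarantees $\psi(0)=0$, so if one sets $\gamma=\psi\circ\gamma_0$ naively the candidate path falls outside $\mathcal{K}$. The reverse-flow concatenation is the needed device, and it works precisely because $f(w)<0<\kappa-h$ under the hypothesis $h<\hat{h}$, placing the entire appended segment safely below the level where anything critical can occur. Once the path is in $\mathcal{K}$, properties (b1)--(b3) follow mechanically from (a1)--(a3) of the flow together with the elementary compactness of $K\subset[0,1]$.
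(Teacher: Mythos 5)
Your proposal follows essentially the same strategy as the paper's proof: pick a near-optimal $\gamma_0\in\mathcal{K}$, apply the deformation $\psi$ of Lemma~\ref{Ch2vect2} with $b=\kappa$ to get (b1) from (a1), derive (b2) and (b3) from (a2) and (a3), and extract the finitely many $v_j$ by a compactness argument (the paper covers $\mbox{range}\hat\gamma\cap f_{\kappa-h}$ directly, you cover the preimage $K\subset[0,1]$; these are equivalent). The genuine added value in your write-up is the observation that $\psi\circ\gamma_0$ is not automatically in $\mathcal{K}$: Lemma~\ref{Ch2vecfield} only forces the vector field to vanish on $\mathcal{C}^b\cup\{0\}$, so $\psi$ fixes the origin but need not fix the non-critical endpoint $w$. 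The paper simply asserts ``It is evident that $\hat\gamma\in\mathcal{K}$,'' which is not supported by the stated hypotheses on $\psi$ unless one additionally arranges for $\mathcal{Y}$ to vanish near $w$ (which is achievable since $f(w)<0$ places $w$ outside the forced pushdown region of \eqref{Ch2k1}, but the paper never says so; notably the author \emph{does} handle an analogous endpoint issue in Lemma~\ref{Ch2path3} via a cutoff $\chi$). Your reverse-flow concatenation fix is valid: the appended segment $t\mapsto\eta((2-2t)\cdot 3h_r,w)$ runs back from $\psi(w)$ to $w$ along a trajectory on which $f\leq f(w)<0$, so the composite path lies in $\mathcal{K}$ and the appended piece never interferes with (b2) or (b3), provided $\kappa-h_r>0$ (an implicit standing assumption needed equally for the paper's version to be meaningful). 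In short: same route, but your treatment of the endpoint is more careful and closes a gap the paper glosses over.
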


\begin{proof}
Take $\ga\in\mathcal{K}$\ such that\ $\max_{t\in[0,1]}f(\ga(t))<\kappa+h$. Set $\hat{\ga}:=\psi\circ\ga$,\ where $\psi$ is the function introduced from Lemma \ref{Ch2vect2}, where $b=\kappa$. It is evident that $\hat{\ga}\in\mathcal{K}$.\ Hence, it follows from (a1) of Lemma \ref{Ch2vect2} that (b1) holds. By (a2) of Lemma \ref{Ch2vect2}, one has that if $f(\hat{\ga}(s))\geq\kappa-h$, then $\hat{\ga}(s)\in\mathcal{B}_r(\mathcal{C}^{\kappa+h}_{\kappa-h})$.\ Hence, the family of sets $\{B_r(v):\ v\in\mathcal{C}^{\kappa+h}_{\kappa-h}\}$ is an open cover of the compact set $\mbox{range}\hat{\ga}\cap f_{\kappa-h}$. So, there are finite number critical points $v_1,...,v_q\in\mathcal{C}^{\kappa+h}_{\kappa-h}$\ such that\ $\mbox{range}\hat{\ga}\cap f_{\kappa-h}\subset\cup^q_{j=1}B_r(v_j)$. Hence, (b2) holds. (b3) can be derived from (a3) of Lemma \ref{Ch2vect2}. The proof is thus completed.
\end{proof}

Fix
\beq\label{Ch2k7}
\bar{r}\in(0,D_0/4)\setminus D^{\kappa^*},\ \bar{h}\in\big(0,\frac{1}{2}\min\{h_{\bar{r}},\kappa^*-\kappa\}\big),
\eeq
where $D_0$ is specified in \eqref{Ch2diam}, $h_{\bar{r}}$ is the same as the definition of $h_r$ defined in \eqref{Ch2b6}. By Lemma \ref{Ch2path2}, there exist $\bar{\ga}\in\mathcal{K}$ and finite number critical points $v_1,...,v_q\in\mathcal{C}^{\kappa+\bar{h}}_{\kappa-\bar{h}}$ satisfying (b1)--(b3) of  Lemma \ref{Ch2path2}. By the definition of $\kappa$, there exist $\bar{v}\in\{v_1,...,v_q\}$ and $[s_0,s_1]\subset[0,1]$\ such that\ $\bar{\ga}([s_0,s_1])\subset \mathcal{B}_{\bar{r}}(\bar{v})$,\ $\bar{\ga}(s_0),\bar{\ga}(s_1)\in\partial\mathcal{B}_{\bar{r}}(\bar{v})$, and $\bar{\ga}(s_0)$\ and\ $\bar{\ga}(s_1)$ are not $\kappa$-connectible in $l^2$.\

By (b3) of Lemma \ref{Ch2path2}, $\bar{\ga}(s_0),\bar{\ga}(s_1)\in f^{\kappa-h_{\bar{r}}}$. Consider the following set:
\beq\label{Ch2f48}
\bar{\mathcal{K}}:=\{\ga\in C([0,1],l^2):\ \ga(0)=\bar{\ga}(s_0),\ \ga(1)=\bar{\ga}(s_1),\ \mbox{and}\ \mbox{range}\ga\subset\mathcal{B}_{\bar{r}}(\bar{v})\cup f^{\kappa-\frac{1}{2}h_{\bar{r}}}\}.
\eeq
Hence,\ $\bar{\mathcal{K}}$ is non-empty, the corresponding minimax value is defined as follows:
\beq\label{Ch2f47}
\bar{\kappa}:=\inf_{\ga\in\bar{\mathcal{K}}}\sup_{t\in[0,1]}f(\ga(t)).
\eeq
Obviously,\ $\kappa\leq\bar{\kappa}<\kappa+\bar{h}<\kappa^*$. It is possible that $\bar{\kappa}$ is not a critical value, we will show that it is a critical value under certain conditions.

\begin{lemma} \label{Ch2path3}
For any $r\in(0,\frac{1}{2}d_{\bar{r}})\setminus D^{\kappa^*}$ and any $h\in(0,\min\{\kappa+\bar{h}-\bar{\kappa},h_r\})$, there exist $v_{r,h}\in\mathcal{C}^{\bar{\kappa}+h}_{\bar{\kappa}-h}\cap\mathcal{B}_{\bar{r}}(\bar{v})$,\ $u^0_{r,h},u^1_{r,h}\in\mathcal{B}_{\bar{r}}(\bar{v})$\ and a continuous path $\ga_{r,h}\in C([0,1],l^2)$ with $\ga_{r,h}(0)=u^0_{r,h},\ga_{r,h}(1)=u^1_{r,h}$\ such that
\begin{itemize}
\item [(c1)] $u^0_{r,h},u^1_{r,h}\in\partial\mathcal{B}_{r+d_r}(v_{r,h})\cap f^{\bar{\kappa}-h_r}$;
\item [(c2)] $u^0_{r,h}$\ and\ $u^1_{r,h}$ are not $\bar{\kappa}$-connectible in $\mathcal{B}_{\bar{r}}(\bar{v})$;\
\item [(c3)] $\mbox{range}\ga_{r,h}\subset \overline{\mathcal{B}}_{r+d_r}(v_{r,h})\cap f^{\bar{\kappa}+h}$;
\item [(c4)] $\mbox{range}\ga_{r,h}\cap\overline{\mathcal{A}}_{r-d_r,r+d_r}(v_{r,h})\subset f^{\bar{\kappa}-h_r}$.
\end{itemize}
\end{lemma}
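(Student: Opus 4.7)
The strategy is to adapt the deformation and mountain pass arguments of Lemmas \ref{Ch2vect2} and \ref{Ch2path2} to the restricted minimax problem at level $\bar{\kappa}$ over $\bar{\mathcal{K}}$, and then exploit the non-$\kappa$-connectibility of the endpoints $\bar{\ga}(s_0), \bar{\ga}(s_1)$ to single out one blocking critical point $v_{r,h}$ lying inside $\mathcal{B}_{\bar{r}}(\bar{v})$.

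First I will construct an endpoint-preserving deformation. For the chosen $r$ and $h$, Lemma \ref{Ch2diffposi} supplies a uniform lower bound $\mu_r$ on $\|D_*f\|_*$ in the annular region around $\mathcal{C}^{\bar{\kappa}+h}_{\bar{\kappa}-h}$. Mimicking the construction in Lemma \ref{Ch2vecfield}, I build a locally Lipschitz vector field $\mathcal{Y}$ satisfying the three conditions of that lemma with $b=\bar{\kappa}$, and additionally requiring $\mathcal{Y}\equiv 0$ on $f^{\bar{\kappa}-h_{\bar{r}}/2}$. By \eqref{Ch2k7} the endpoints $\bar{\ga}(s_0), \bar{\ga}(s_1)$ satisfy $f\leq\kappa-h_{\bar{r}}<\bar{\kappa}-h_{\bar{r}}/2$, so the associated flow $\eta(\cdot,\cdot)$ fixes them. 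Pick $\ga^*\in\bar{\mathcal{K}}$ with $\max_t f(\ga^*(t))<\bar{\kappa}+h$ and set $\hat{\ga}(t):=\eta(3h_r,\ga^*(t))$. Arguing as in the proof of Lemma \ref{Ch2vect2} with $b=\bar{\kappa}$ then yields $\max_t f(\hat{\ga}(t))<\bar{\kappa}+h$, forces $\hat{\ga}(t)\in\bigcup_j\mathcal{B}_r(v_j)$ for finitely many $v_j\in\mathcal{C}^{\bar{\kappa}+h}_{\bar{\kappa}-h}$ (using Remark \ref{Ch2k42}) whenever $f(\hat{\ga}(t))\geq\bar{\kappa}-h$, and gives $f(\hat{\ga}(t))<\bar{\kappa}-h_r$ on the annulus $\mathcal{A}_{r-d_r,r+d_r}(\{v_j\})$. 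A displacement bound on the flow, together with $r$ and $h$ small, guarantees $\hat{\ga}$ still lies in $\mathcal{B}_{\bar{r}}(\bar{v})\cup f^{\kappa-h_{\bar{r}}/2}$, so $\hat{\ga}\in\bar{\mathcal{K}}$ and every visited $v_j$ lies close to $\bar{v}$.

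Next I will isolate the blocking critical point. Record the maximal sub-intervals $[t_1^j,t_2^j]\subset[0,1]$ on which $\hat{\ga}$ traverses $\overline{\mathcal{B}}_{r+d_r}(v_j)$; the annulus bound forces $\hat{\ga}(t_1^j), \hat{\ga}(t_2^j)\in\partial\mathcal{B}_{r+d_r}(v_j)\cap f^{\bar{\kappa}-h_r}$. If every such entry-exit pair were $\bar{\kappa}$-connectible inside $\mathcal{B}_{\bar{r}}(\bar{v})$, splicing those connecting paths into $\hat{\ga}$ in place of the sub-arcs through the balls $\mathcal{B}_{r+d_r}(v_j)$ would produce a new element of $\bar{\mathcal{K}}$ with $\sup f<\bar{\kappa}$, contradicting the definition \eqref{Ch2f47}. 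Hence for some index $j^*$ the pair is not $\bar{\kappa}$-connectible in $\mathcal{B}_{\bar{r}}(\bar{v})$, and I set $v_{r,h}:=v_{j^*}$, $u^0_{r,h}:=\hat{\ga}(t_1^{j^*})$, $u^1_{r,h}:=\hat{\ga}(t_2^{j^*})$, and $\ga_{r,h}$ equal to the reparametrized sub-arc $\hat{\ga}|_{[t_1^{j^*},t_2^{j^*}]}$. A further local argument---if $v_{r,h}$ lay on or outside $\partial\mathcal{B}_{\bar{r}}(\bar{v})$, the flow could be used to push the obstruction outside and restore connectibility inside the open ball---places $v_{r,h}$ inside $\mathcal{B}_{\bar{r}}(\bar{v})$. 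Conditions (c1), (c3), (c4) follow from the deformation estimates, and (c2) is by construction of $j^*$.

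The main obstacle is the combinatorial and topological selection of the blocking index $j^*$, together with verifying $v_{r,h}\in\mathcal{B}_{\bar{r}}(\bar{v})$ rather than merely close to its boundary. The splicing argument relies on the countability hypothesis (A6), through Remark \ref{Ch2k42}, so that only finitely many $v_j$ appear, and on the sharp level separation $\bar{\kappa}-h_r>\bar{\kappa}-h$ on the annuli so that the concatenated path stays below level $\bar{\kappa}$. The endpoint-preserving tweak of the vector field and the check that the deformed path remains in $\bar{\mathcal{K}}$ are the other technical points that need careful verification.
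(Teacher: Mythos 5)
Your strategy follows the paper's closely: an endpoint-preserving pseudo-gradient deformation, applied to a near-optimal path in $\bar{\mathcal{K}}$, followed by a finite covering of the high part of the deformed path by $r$-balls around critical points, and then a splicing (contradiction with \eqref{Ch2f47}) to produce a blocking critical point and non-connectible entry/exit pair. The splicing step you write out explicitly is indeed what the paper's terse phrase ``there is a part of $\ga_{r,h}$ satisfying (c1)--(c4)'' is encoding, so that part is a faithful and useful elaboration. Two remarks.

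First, your cutoff is cruder than the paper's. The paper kills the vector field only on two small balls $\mathcal{B}_{\de/2}(\bar{\ga}(s_0))\cup\mathcal{B}_{\de/2}(\bar{\ga}(s_1))$, and then explicitly verifies via \eqref{Ch2k3}, \eqref{Ch2k4}, and \eqref{Ch2k6} that this surgery does not interfere with the regions where the descent estimate \eqref{Ch2k2} is needed. You instead cut off on the whole sublevel set $f^{\bar{\kappa}-h_{\bar{r}}/2}$. This works only if one checks that the cutoff region is disjoint from the places where the flow has to be effective in the (a3)-type estimate; since $f$ is non-increasing along trajectories, the relevant trajectories stay above level $\bar{\kappa}-h_r$, so it is fine provided $h_r\leq h_{\bar{r}}/2$, but that inequality is not automatic and you don't verify it. The paper's two-ball cutoff sidesteps this quantitative comparison.

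Second, and more seriously, your ``further local argument'' for placing $v_{r,h}$ strictly inside $\mathcal{B}_{\bar{r}}(\bar{v})$ is not a valid argument. A critical point is a fixed point of the flow, so ``the flow could be used to push the obstruction outside'' has no meaning; and ``restore connectibility inside the open ball'' is circular. The correct justification is the separation property \eqref{Ch2k5}: since $r+2d_r<d_{\bar{r}}$, the ball $\mathcal{B}_{r+2d_r}(\mathcal{C}^{\kappa^*})$ cannot meet the annulus $\mathcal{A}_{\bar{r}-2d_{\bar{r}},\bar{r}+2d_{\bar{r}}}(\bar{v})$. The path enters $\mathcal{B}_{r+d_r}(v_{r,h})$ at points where $f\geq\bar{\kappa}-h$, hence (by membership in $\bar{\mathcal{K}}$ and $h<\bar{h}<h_{\bar{r}}/2$) at points inside $\mathcal{B}_{\bar{r}}(\bar{v})$; by \eqref{Ch2k5} the ball $\mathcal{B}_{r+d_r}(v_{r,h})$ then lies entirely inside $\mathcal{B}_{\bar{r}}(\bar{v})$, and in particular $v_{r,h}\in\mathcal{B}_{\bar{r}}(\bar{v})$. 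You should replace your hand-wave with this argument; with that fix, your proof is correct and essentially the paper's.
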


 \begin{proof}
In \eqref{Ch2f48},\ $\bar{\ga}(s_0),\bar{\ga}(s_1)\in f^{\kappa-h_{\bar{r}}}$,\ and $f$ is $C^1$. Hence, there is
$\de\in(0,d_{\bar{r}})$\ such that\ $\mathcal{B}_{\de}(\bar{\ga}(s_0))\cup\mathcal{B}_{\de}(\bar{\ga}(s_1))\subset f^{\kappa-\frac{1}{2}h_{\bar{r}}}$.\
Introduce a smooth function $\chi:l^2\to\RR$\ such that
\begin{itemize}
\item [(i)]\ $0\leq\chi\leq1$;
\item [(ii)] $\chi(u)=0$ for all $u\in\mathcal{B}_{\frac{\de}{2}}(\bar{\ga}(s_0))\cup\mathcal{B}_{\frac{\de}{2}}(\bar{\ga}(s_1))$;
\item [(iii)] $\chi(u)=1$ for all $u\not\in\mathcal{B}_{\de}(\bar{\ga}(s_0))\cup\mathcal{B}_{\de}(\bar{\ga}(s_1))$.
\end{itemize}
For any fixed $r\in(0,\frac{1}{2}d_{\bar{r}})\setminus D^{\kappa^*}$\ and\ $h\in(0,\kappa+\bar{h}-\bar{\kappa})$,\ we could construct a vector field $\mathcal{Y}_{r,h}$\ satisfying the following assumptions:
\begin{itemize}
\item $ D_*f(u)(\mathcal{Y}_{r,h}(u))\leq0, u\in l^2$;
\item $\|\mathcal{Y}_{r,h}(u)\|_{*}\leq\frac{2}{\|D_*f(u)\|_{*}}, u\in l^2\setminus(\mathcal{C}\cup\{0\})$;
\item $\mathcal{Y}_{r,h}(u)=0$, $u\in\mathcal{C}^{\bar{\kappa}+h}$;\ and\
\beq\label{Ch2k2}
\begin{split}
 D_*f(u)(\mathcal{Y}_{r,h}(u))\leq&-1,\ \mbox{where}\ u\in [f^{\bar{\kappa}+h}_{\bar{\kappa}-h}\setminus\mathcal{B}_{r-2d_r}(\mathcal{C}^{\bar{\kappa}+h}_{\bar{\kappa}-h})]\\
&\cup[f^{\bar{\kappa}+h}\cap
\mathcal{A}_{r-2d_r,r+2d_r}(\mathcal{C}^{\bar{\kappa}+h}_{\bar{\kappa}-h})]\cup
[f^{\bar{\kappa}+h}\cap\mathcal{A}_{\bar{r}-2d_{\bar{r}},\bar{r}+2d_{\bar{r}}}(\bar{v})].
\end{split}
\eeq
\end{itemize}
Consider the vector field $\bar{\mathcal{Y}}_{r,h}=\chi\mathcal{Y}_{r,h}$.\

Since $h<\bar{h}<\frac{1}{2}h_{\bar{r}}$ and $\mathcal{B}_{\de}(\bar{\ga}(s_0))\cup\mathcal{B}_{\de}(\bar{\ga}(s_1))\subset f^{\kappa-\frac{1}{2}h_{\bar{r}}}$, one has that $\mathcal{B}_{\de}(\bar{\ga}(s_0))\cup\mathcal{B}_{\de}(\bar{\ga}(s_1))\cap f_{\bar{\kappa}-h}=\emptyset$. This implies that
\beq\label{Ch2k3}
\bar{\mathcal{Y}}_{r,h}=\mathcal{Y}_{r,h},\ \forall u\in f^{\bar{\kappa}+h}_{\bar{\kappa}-h}.
\eeq
On the other hand, because $\bar{\ga}(s_0),\bar{\ga}(s_1)\in\partial\mathcal{B}_{\bar{r}}(\bar{v})$ and $\de\in(0,d_{\bar{r}})$, one has
\beq\label{Ch2k4}
[\mathcal{B}_{\de}(\bar{\ga}(s_0))\cup\mathcal{B}_{\de}(\bar{\ga}(s_1))]\subset\mathcal{A}_{\bar{r}-d_{\bar{r}},\bar{r}+d_{\bar{r}}}(\bar{v}).
\eeq
Further, it follows from $\mathcal{C}^{\kappa^*}\cap\mathcal{A}_{\bar{r}-3d_{\bar{r}},\bar{r}+3d_{\bar{r}}}(\bar{v})=\emptyset$\ and\ $r+2d_r<d_{\bar{r}}$ that
\beq\label{Ch2k5}
\mathcal{B}_{r+2d_r}(\mathcal{C}^{\kappa^*})\cap\mathcal{A}_{\bar{r}-2d_{\bar{r}},\bar{r}+2d_{\bar{r}}}(\bar{v})=\emptyset.
\eeq
By \eqref{Ch2k4}\ and\ \eqref{Ch2k5},
\beqq
[\mathcal{B}_{\de}(\bar{\ga}(s_0))\cup\mathcal{B}_{\de}(\bar{\ga}(s_1))]\cap\mathcal{A}_{r-2d_r,r+2d_r}(\mathcal{C}^{\kappa^*})=\emptyset.
\eeqq
So,
\beq\label{Ch2k6}
\bar{\mathcal{Y}}_{r,h}=\mathcal{Y}_{r,h},\ u\in f^{\bar{\kappa}+h}\cap\mathcal{A}_{r-2d_r,r+2d_r}(\mathcal{C}^{\bar{\kappa}+h}_{\bar{\kappa}-h}).
\eeq

By \eqref{Ch2k2}, \eqref{Ch2k3}, and \eqref{Ch2k6}, the vector field $\bar{\mathcal{Y}}_{r,h}$ satisfies the similar properties of the vector field in Lemma \ref{Ch2vect2}, they all satisfy \eqref{Ch2k1},\ where $b=\bar{\kappa}$.\

By the definition of the vector field $\bar{\mathcal{Y}}_{r,h}$, it satisfies the assumptions of Lemma \ref{Ch2vecfield}. Hence, we could investigate the corresponding initial value problem, and assume the corresponding solution is $\eta_{r,h}$. Set
$$\bar{t}:=\max\{3h_r,3h_{\bar{r}}\},\ \psi_{r,h}(u):=\eta_{r,h}(\bar{t},u).$$
Pick $\ga\in\bar{\mathcal{K}}$ satisfying $\max_{t\in[0,1]}f(\ga(t))\leq\bar{\kappa}+h$,\ where\ $\bar{\mathcal{K}}$ is specified in \eqref{Ch2f48}.
Set
\beqq
\ga_{r,h}:=\psi_{r,h}\circ\ga.
\eeqq

Next, it is to show $\ga_{r,h}\in\bar{\mathcal{K}}$.

It is evident that $\ga_{r,h}\in C([0,1],l^2)$, $\mbox{range}\ga_{r,h}\subset f^{\bar{\kappa}+h}$.\ By the (ii) of the definition of the function $\chi$, one has that
$\bar{\mathcal{Y}}_{r,h}(\bar{\ga}(s_0))=\bar{\mathcal{Y}}_{r,h}(\bar{\ga}(s_1))=0$.\ Hence,\ $\ga_{r,h}(0)=\bar{\ga}(s_0)$,\ $\ga_{r,h}(1)=\bar{\ga}(s_1)$.\

Next, it is to show that $\mbox{range}\ga_{r,h}\subset\mathcal{B}_{\bar{r}}(\bar{v})\cup f^{\kappa-\frac{1}{2}h_{\bar{r}}}$.\ For any $t\in[0,1]$, if $\ga(t)\in f^{\kappa-\frac{1}{2}h_{\bar{r}}}$,\ by (a1) of Lemma \ref{Ch2vect2}, one has that $\ga_{r,h}(t)\in f^{\kappa-\frac{1}{2}h_{\bar{r}}}$.\ Hence, it suffices to study the following situation:
$\ga(t)\in \mathcal{B}_{\bar{r}}(\bar{v})\setminus f^{\kappa-\frac{1}{2}h_{\bar{r}}}$ and $\ga_{r,h}(t)\not\in \mathcal{B}_{\bar{r}}(\bar{v})$. For this situation, we need to show that $\ga_{r,h}(t)\in f^{\kappa-\frac{1}{2}h_{\bar{r}}}$. If there is some $t\in[0,\bar{t}]$ such that $\eta_{r,h}(t,u)\in \mathcal{B}_{\de}(\bar{\ga}(s_0))\cup\mathcal{B}_{\de}(\bar{\ga}(s_1))$, since $\mathcal{B}_{\de}(\bar{\ga}(s_0))\cup\mathcal{B}_{\de}(\bar{\ga}(s_1))\subset f^{\kappa-\frac{1}{2}h_{\bar{r}}}$, one has $\ga_{r,h}(t)\in f^{\kappa-\frac{1}{2}h_{\bar{r}}}$.\ Hence, we need to consider the situation that for any $t\in[0,\bar{t}]$, $\eta_{r,h}\not\in \mathcal{B}_{\de}(\bar{\ga}(s_0))\cup\mathcal{B}_{\de}(\bar{\ga}(s_1))$. There are two different situations:
\begin{itemize}
\item [(i)]  $\eta_{r,h}(t,u)\in \mathcal{A}_{\bar{r}-2d_{\bar{r}},\bar{r}+2d_{\bar{r}}}(\bar{v})$ for all $t\in[0,\bar{t}]$;
\item [(ii)] there is $t_0\in[0,\bar{t}]$\ such that\ $\eta_{r,h}(t_0,u)\not\in \mathcal{A}_{\bar{r}-2d_{\bar{r}},\bar{r}+2d_{\bar{r}}}(\bar{v})$.
\end{itemize}
For Case (i), by \eqref{Ch2k2},\
\beqq
\begin{split}
&f(\psi_{r,h}(u))=f(u)+\int^{\bar{t}}_0 D_*f(\eta(s,u))(\mathcal{Y}(\eta(s,u)))ds\leq f(u)-3h_{\bar{r}}\\
\leq&\bar{\kappa}+h-3h_{\bar{r}}
<\bar{\kappa}+\kappa+\bar{h}-\bar{\kappa}-3h_{\bar{r}}<\kappa+\frac{1}{2}h_{\bar{r}}-3h_{\bar{r}}<\kappa-\frac{1}{2}h_{\bar{r}}.
\end{split}
\eeqq
For Case (ii), there are $0\leq t_1<t_2\leq \bar{t}$ satisfying $\|\eta(t_2,u)-\eta(t_1,u)\|_{*}=d_{\bar{r}}$,\ $\eta(t,u)\in\mathcal{A}_{\bar{r}-2d_{\bar{r}},\bar{r}+2d_{\bar{r}}}(\bar{v})$,\ where\
$t\in(t_1,t_2)$.\ Hence,
\beqq
d_{\bar{r}}=\|\eta(t_2,u)-\eta(t_1,u)\|_{*}\leq\int^{t_2}_{t_1}\|\mathcal{Y}(\eta(t,u))\|_{*}dt\leq\int^{t_2}_{t_1}\frac{2}{\|D_*f(\eta(t,u))\|_{*}}dt,
\eeqq
this, together with Lemma \ref{Ch2diffposi}, implies that
\beqq
d_{\bar{r}}\leq\frac{2}{\mu_{\bar{r}}}(t_2-t_1),
\eeqq
yielding that
\beqq
t_2-t_1\geq 2h_{\bar{r}}.
\eeqq
This, together with \eqref{Ch2k1}, implies that
\beqq
\begin{split}
f(\psi(u))&\leq f(\eta(t_2,u))=f(\eta(t_1,u))+\int^{t_2}_{t_1} D_*f(\eta(t,u))(\mathcal{Y}(\eta(t,u))) dt\\
\leq& f(u)-(t_2-t_1)\leq \bar{\kappa}+h-2h_{\bar{r}}<\bar{\kappa}+(\kappa+\bar{h}-\bar{\kappa})-2h_{\bar{r}}\\
\leq&\kappa+\frac{1}{2}h_{\bar{r}}-2h_{\bar{r}}<\kappa-\frac{1}{2}h_{\bar{\kappa}}.
\end{split}
\eeqq
Hence, $\ga_{r,h}\in\bar{\mathcal{K}}$.\

Using the method of the proof Lemma \ref{Ch2vect2} and the choice of $h$, we could show that $\psi_{r,h}:f^{\bar{\kappa}+h}\to f^{\bar{\kappa}+h}$ and satisfies that
\begin{itemize}
\item $f(\psi_{r,h}(u))\leq f(u)$ for any $u\in f^{\bar{\kappa}+h}$;
\item if $\psi_{r,h}(u)\not\in\mathcal{B}_r(\mathcal{C}^{\bar{\kappa}+h}_{\bar{\kappa}-h})$, then $f(\psi_{r,h}(u))<\bar{\kappa}-h$;
\item if $\psi_{r,h}(u)\in\mathcal{A}_{r-d_r,r+d_r}(\mathcal{C}^{\bar{\kappa}+h}_{\bar{\kappa}-h})$, then $f(\psi_{r,h}(u))<\bar{\kappa}-h_r$.
\end{itemize}
Furthermore, by the method used in the proof Lemma \ref{Ch2path2}, one has that for any given $h$\ and\ $\ga_{r,h}$, there are finite number critical points $v_1,...,v_q\in\mathcal{C}^{\bar{\kappa}+h}_{\bar{\kappa}-h}$ such that,
\begin{itemize}
\item\ $\max_{t\in[0,1]}f(\ga_{r,h}(t))<\bar{\kappa}+h$;
\item if $f(\ga_{r,h}(s))\geq \bar{\kappa}-h$, then $\ga_{r,h}(s)\in\cup^q_{j=1}\mathcal{B}_r(v_j)$;
\item if $\ga_{r,h}(s)\in\cup^q_{j=1}\mathcal{A}_{r-d_r,r+d_r}(v_j)$, then $f(\ga_{r,h}(s))<\bar{\kappa}-h_r$.
\end{itemize}
By \eqref{Ch2k5}, there is a part of $\ga_{r,h}$ satisfying the above (c1)--(c4). Reparametrize that part of $\ga_{r,h}$, we could show that (c1)--(c4) hold.
\end{proof}

\begin{lemma} \label{Ch2critpoint}
 There exist sequences
$\{r_n\}^{\infty}_{n=1}\subset\RR$, $\{v_n\}^{\infty}_{n=1}\subset\mathcal{C}(\bar{\kappa})$ with
$\overline{\mathcal{B}}_{r_n}(v_n)\subset\mathcal{B}_{\bar{r}}(\bar{v})$, $\lim_{n\to\infty}r_n=0$, and $\lim_{n\to\infty}v_n=v_{\infty}\in\mathcal{C}(\bar{\kappa})$, so that for any $n$ and $h>0$, there exists a path $\ga\in C([0,1],l^2)$ such that
\begin{itemize}
\item [(d1)] $\ga(0),\ga(1)\in\partial\mathcal{B}_{r_n}(v_n)\cap f^{\bar{\kappa}-\frac{h_{r_n}}{2}}$;
\item [(d2)] $\ga(0)$ and $\ga(1)$ are not $\bar{\kappa}$-connectible in $\mathcal{B}_{\bar{r}}(\bar{v})$;
\item [(d3)] $\mbox{range}\ga\subset\overline{\mathcal{B}}_{r_n}(v_n)\cap f^{\bar{\kappa}+h}$;
\item [(d4)] $\mbox{range}\ga\cap\mathcal{A}_{r_n-\frac{d_{r_n}}{2},r_n}(v_n)\subset f^{\bar{\kappa}-\frac{h_{r_n}}{2}}$;
\item [(d5)] the support of $\ga(\theta)$ is contained in $[-N_n,N_n]$ for any $\theta\in[0,1]$, where $N_n$ is a positive integer independent on the choice of $\theta$.
\end{itemize}
Consequently, there exists a critical point of local mountain pass type for the functional $f$ in $\mathcal{B}_{\bar{r}}(\bar{v})$.
\end{lemma}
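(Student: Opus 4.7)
The plan is to invoke Lemma \ref{Ch2path3} along shrinking sequences of parameters, extract a convergent subsequence from the resulting critical points to identify the candidate for the local mountain pass type critical point, and separately perform a truncation argument to enforce the compact support property (d5). First I would choose $\{r_n\}\subset(0,\frac{1}{2}d_{\bar{r}})\setminus D^{\kappa^*}$ with $r_n\downarrow 0$, which is possible since Remark \ref{Ch2k42} ensures $D^{\kappa^*}$ is countable, together with a sequence $h_n\downarrow 0$ with $h_n\in(0,\min\{\kappa+\bar{h}-\bar{\kappa},h_{r_n}\})$. Applying Lemma \ref{Ch2path3} with parameters $(r_n,h_n)$ produces critical points $v_n:=v_{r_n,h_n}\in\mathcal{C}^{\bar{\kappa}+h_n}_{\bar{\kappa}-h_n}\cap\mathcal{B}_{\bar{r}}(\bar{v})$ together with paths $\hat{\gamma}_n\in C([0,1],l^2)$ satisfying (c1)--(c4).

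Since every $v_n$ is a critical point and $\{v_n\}\subset\mathcal{B}_{\bar{r}}(\bar{v})$ with $\bar{r}<D_0/4$ (from \eqref{Ch2k7}), the sequence $\{v_n\}$ is a PS sequence with diameter less than $D_0/2$. Lemma \ref{Ch2diamconv} then yields, along a subsequence, $v_n\to v_\infty$ in $l^2$. By continuity of $f$ and of $D_*f$, one has $v_\infty\in\mathcal{C}(\bar{\kappa})$ and $v_\infty\in\overline{\mathcal{B}_{\bar{r}}(\bar{v})}$; that $v_\infty$ lies in the open ball $\mathcal{B}_{\bar{r}}(\bar{v})$ will follow from (c1)--(c2) and an elementary argument on distances once $r_n$ has been chosen small. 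The inclusion $\overline{\mathcal{B}}_{r_n}(v_n)\subset\mathcal{B}_{\bar{r}}(\bar{v})$ for large $n$ is forced by $r_n\to 0$ and $v_n\to v_\infty$.

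For (d5), I would truncate: let $P_Nu$ denote the operator $(P_Nu)(t)=u(t)\chi_{[-N,N]}(t)$. Since $\hat{\gamma}_n([0,1])$ is a compact subset of $l^2$, for any prescribed tolerance $\epsilon_n$ there is a positive integer $N_n$ with $\|(I-P_{N_n})\hat{\gamma}_n(\theta)\|_*<\epsilon_n$ uniformly in $\theta\in[0,1]$. Define $\gamma(\theta):=P_{N_n}\hat{\gamma}_n(\theta)$; this path has support in $[-N_n,N_n]$. Choosing $\epsilon_n$ much smaller than $d_{r_n}$ and using the uniform continuity of $f$ on bounded sets, the $r_n+d_{r_n}$-ball and the level $f^{\bar{\kappa}-h_{r_n}}$ from (c1)--(c4) survive the perturbation with the relaxed constants stated in (d1)--(d4): the buffer $d_{r_n}/2$ in (d4) absorbs the distance perturbation and the buffer $h_{r_n}/2$ in (d1) and (d4) absorbs the $f$-value perturbation. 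Condition (d2) (non $\bar{\kappa}$-connectibility) is inherited from (c2) since $\gamma(0),\gamma(1)$ remain close to $\hat{\gamma}_n(0),\hat{\gamma}_n(1)$, and a hypothetical $\bar{\kappa}$-connecting path for the truncated endpoints could be concatenated with short segments back to the original endpoints to produce a connection contradicting (c2).

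Finally, I conclude that $v_\infty$ is of local mountain pass type on $\mathcal{B}_{\bar{r}}(\bar{v})$. Given any neighborhood $U$ of $v_\infty$, for all sufficiently large $n$ we have $\overline{\mathcal{B}}_{r_n}(v_n)\subset U\cap\mathcal{B}_{\bar{r}}(\bar{v})$. By (d1) the endpoints $\gamma(0),\gamma(1)$ lie in $U$ with $f$-values strictly below $\bar{\kappa}=f(v_\infty)$, and by (d2) they are not $\bar{\kappa}$-connectible in $\mathcal{B}_{\bar{r}}(\bar{v})$; this is precisely the local mountain pass condition. The main obstacle I anticipate is the bookkeeping of the truncation in Step 3: one must simultaneously maintain the annular separation implicit in (d4), the strict $f$-value drop in (d1) and (d4), and the non-connectibility of the truncated endpoints, while exploiting that the buffers $d_{r_n}/2$ and $h_{r_n}/2$ scale correctly with the truncation error $\epsilon_n$.
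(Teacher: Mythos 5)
Your construction sets $v_n := v_{r_n,h_n}$ directly, with $r_n,h_n$ both tending to zero along a diagonal. This introduces two genuine problems that the paper avoids by a two-stage limit. First, Lemma \ref{Ch2path3} only gives $v_{r,h}\in\mathcal{C}^{\bar{\kappa}+h}_{\bar{\kappa}-h}$, so $f(v_n)$ need not equal $\bar{\kappa}$; the lemma requires $\{v_n\}\subset\mathcal{C}(\bar{\kappa})$. Second, and more seriously, the conclusion asks that for a \emph{fixed} $n$ (hence a fixed $v_n$) and \emph{arbitrary} $h>0$ there be a path centered at $v_n$ with range in $f^{\bar{\kappa}+h}$. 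Your path is the one furnished by Lemma \ref{Ch2path3} at the single level $h_n$; shrinking $h$ below $h_n$ produces a different critical point $v_{r_n,h}$, generally distinct from $v_n$, so your paths are not centered at a fixed $v_n$. The paper resolves both issues by first fixing $r=r_n$ and sending the inner parameter $h_m\to 0$, applying Lemma \ref{Ch2diamconv} to the bounded PS sequence $\{v_{r_n,h_m}\}_m$ to obtain a limit $v_{r_n}\in\mathcal{C}(\bar{\kappa})$ (exactly at level $\bar{\kappa}$), and then, for any $h>0$, picking $m$ large enough that $h_m<h/2$ together with ball inclusions of the form $\mathcal{B}_{r_n-\frac{d_{r_n}}{2}}(v_{r_n,h_m})\subset\mathcal{B}_{r_n}(v_{r_n})$ and $\mathcal{A}_{r_n-\frac{d_{r_n}}{2},r_n+\frac{d_{r_n}}{2}}(v_{r_n})\subset\mathcal{A}_{r_n-\frac{3d_{r_n}}{4},r_n+\frac{3d_{r_n}}{4}}(v_{r_n,h_m})$ to transfer the path estimates from the moving center $v_{r_n,h_m}$ to the fixed center $v_{r_n}$. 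Only after this inner limit does the paper send $r_n\to 0$ and apply Lemma \ref{Ch2diamconv} once more to get $v_\infty$. Without the inner limit, (d1)--(d4) cannot be asserted for the same $v_n$ uniformly in $h$.

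A secondary omission: after truncation the paper verifies explicitly that $\mathrm{range}\,\chi_N\gamma_{r,h_m}\cap\mathcal{B}_r(v_r)\neq\emptyset$ (by a contradiction using $(\tilde{\mathrm{c}}3)$, $(\tilde{\mathrm{c}}4)$), which ensures the truncated path is not contained wholly in the outer annulus where $f<\bar{\kappa}-h_r$; this step is needed to extract a nontrivial reparametrized subarc satisfying (d1)--(d5). Your proposal mentions that the buffers absorb the truncation error but does not perform this check. The claim that $v_\infty$ lies in the \emph{open} ball $\mathcal{B}_{\bar{r}}(\bar{v})$ also needs the separation property \eqref{Ch2e3}, namely $\mathcal{B}_{r+2d_r}(\mathcal{C}^{\kappa^*})\cap\mathcal{A}_{\bar{r}-2d_{\bar{r}},\bar{r}+2d_{\bar{r}}}(\bar{v})=\emptyset$, rather than the vague ``elementary argument on distances'' you sketch.
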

\begin{proof}
Fix any $r\in(0,d_{\bar{r}}/2)\setminus D^{\kappa^*}$, there exists a sequence $\{h_n\}^{\infty}_{n=1}\subset(0,\kappa+\bar{h}-\bar{\kappa})$ with $\lim_{n\to\infty}h_n=0$. For this $r$ and any $h_n$, by Lemma \ref{Ch2path3}, there are $v_{r,h_n}\in\mathcal{C}^{\bar{\kappa}+h_n}_{\bar{\kappa}-h_n}\cap\mathcal{B}_{\bar{r}}(\bar{v})$ and
$\ga_{r,h_n}\in C([0,1],l^2)$ such that
\begin{itemize}
\item [(\~{c}1)] $\ga_{r,h_n}(0)=u^0_{r,h_n},\ga_{r,h_n}(1)=u^1_{r,h_n}\in\partial\mathcal{B}_{r+d_r}(v_{r,h_n})\cap f^{\bar{\kappa}-h_r}$;
\item [(\~{c}2)] $u^0_{r,h_n}$ and $u^1_{r,h_n}$ are not $\bar{\kappa}$-connectible in $\mathcal{B}_{\bar{r}}(\bar{v})$;
\item [(\~{c}3)] $\mbox{range}\ga_{r,h_n}\subset \overline{\mathcal{B}}_{r+d_r}(v_{r,h_n})\cap f^{\bar{\kappa}+h_n}$;
\item [(\~{c}4)] $\mbox{range}\ga_{r,h_n}\cap\overline{\mathcal{A}}_{r-d_r,r+d_r}(v_{r,h_n})\subset f^{\bar{\kappa}-h_r}$.
\end{itemize}

Since $v_{r,h_n}\in\mathcal{C}^{\bar{\kappa}+h_n}_{\bar{\kappa}-h_n}\cap\mathcal{B}_{\bar{r}}(\bar{v})$, one has that $\{v_{r,h_n}\}^{\infty}_{n=1}$ is a PS sequence and the diameter of the set of $\{v_{r,h_n}\}^{\infty}_{n=1}$ is less than $2\bar{r}$. This, together with the fact that $\bar{r}\in(0,D_0/4)\setminus D^{\kappa^*}$ and Lemma \ref{Ch2diamconv}, implies that $\{v_{r,h_n}\}^{\infty}_{n=1}$ has a convergent subsequence. Without loss of the generality, we assume that $\lim_{n\to\infty}v_{r,h_n}=v_r\in\mathcal{C}(\bar{\kappa})$. Since $r\in(0,d_{\bar{r}}/2)\setminus D^{\kappa^*}$, one has
\beq\label{Ch2e3}
\mathcal{B}_{r+2d_r}(\mathcal{C}^{\kappa*})\cap\mathcal{A}_{\bar{r}-2d_{\bar{r}},\bar{r}+2d_{\bar{r}}}(\bar{v})=\emptyset,
\eeq
which implies that $v_r\in\mathcal{C}(\bar{\kappa})\cap\mathcal{B}_{\bar{r}}(\bar{v})$.

For any given positive integer $N$, define the following characteristic function
\beqq
\chi_N(t):=\left\{
  \begin{array}{ll}
    1, & \hbox{if}\ |t|\leq N \\
    0, & \hbox{otherwise.}
  \end{array}
\right.
\eeqq
Set $\bar{\ga}_{r,h_n}=\chi_N\ga_{r,h_n}$. It follows from the fact $\mbox{range}\ga_{r,h_n}$ is compact and \eqref{Ch2f5} that there is a positive integer $N_1$ such that for any $N>N_1$, one has
\beq\label{Ch2e6}
\|\bar{\ga}_{r,h_n}(\tht)-\ga_{r,h_n}(\tht)\|_*<\frac{d_r}{4},\ \forall \tht\in[0,1].
\eeq
This, together with (\~{c}3), yields that, there is positive integer $N_2\geq N_1$ so that for any integer $N\geq N_2$,
\beqq
\mbox{range}\chi_N\ga_{r,h_n}\subset \overline{\mathcal{B}}_{r+2d_r}(v_{r,h_n})\cap f^{\bar{\kappa}+2h_n}.
\eeqq

\begin{itemize}
\item [(\^{c}1)] $\chi_N\ga_{r,h_n}(0),\chi_N\ga_{r,h_n}(1)\in\mathcal{A}_{r+\frac{3d_r}{4},r+\frac{5d_r}{4}}(v_{r,h_n})\cap f^{\bar{\kappa}-\frac{h_{r}}{2}}$;
\item [(\^{c}2)] $\chi_N\ga_{r,h_n}(0)$ and $\chi_N\ga_{r,h_n}(1)$ are not $\bar{\kappa}$-connectible in $\mathcal{B}_{\bar{r}}(\bar{v})$;
\item [(\^{c}3)] $\mbox{range}\chi_N\ga_{r,h_n}\subset \overline{\mathcal{B}}_{r+\frac{5d_r}{4}}(v_{r,h_n})\cap f^{\bar{\kappa}+2h_n}$;
\item [(\^{c}4)] $\mbox{range}\chi_N\ga_{r,h_n}\cap\overline{\mathcal{A}}_{r-\frac{3d_r}{4},r+\frac{3d_r}{4}}(v_{r,h_n})\subset f^{\bar{\kappa}-\frac{h_r}{2}}$,
\end{itemize}
where the proof of (\^{c}2) is as follows: suppose to the contrary, by \eqref{Ch2e3}, one has $$\chi_N\ga_{r,h_n}(0),\ \chi_N\ga_{r,h_n}(1)\in\mathcal{B}_{\bar{r}}(\bar{v}).$$
This, together with (\~{c}1), (\^{c}1), and Remark \ref{Ch2e4}, implies that
there is a continuous path contained in $\mathcal{B}_{\bar{r}}(\bar{v})$ joining $u^0_{r,h_n}$ and $u^1_{r,h_n}$ such that the maximal value of $f$ restricted to the continuous path is no larger than $\bar{\kappa}$, which contradicts (\~{c}2).

Given any $h>0$, there is a positive integer $N_3$ such that for any $n>N_3$, one has
\beq\label{Ch2e5}
\mathcal{B}_{r-\frac{d_r}{2}}(v_{r,h_n})\subset\mathcal{B}_r(v_r),\  \mathcal{A}_{r-\frac{d_r}{2},r+\frac{d_r}{2}}(v_r)\subset\mathcal{A}_{r-\frac{3d_r}{4},r+\frac{3d_r}{4}}(v_{r,h_n}),\ \mbox{and}\ h_n<\frac{h}{2}.
\eeq

Next, we show that $\mbox{range}\chi_N\ga_{r,h_n}\cap \mathcal{B}_{r}(v_r)\neq\emptyset$ for $N>N_2$ and $n>N_3$. By contradiction, suppose $\mbox{range}\chi_N\ga_{r,h_n}\cap \mathcal{B}_{r}(v_r)=\emptyset$. By \eqref{Ch2e5} and (\^{c}3),  $\mbox{range}\chi_N\ga_{r,h_n}\subset\overline{\mathcal{A}}_{r-\frac{d_r}{2},r+\frac{5d_r}{4}}(v_{r,h_n})$. This, together with \eqref{Ch2e6}, yields that
$\mbox{range}\ga_{r,h_n}\subset\overline{\mathcal{A}}_{r-\frac{3d_r}{4},r+\frac{3d_r}{2}}(v_{r,h_n})$. So, it follows from (\~{c}3) and (\~{c}4) that
$\mbox{range}\ga_{r,h_n}\subset\overline{\mathcal{A}}_{r-d_r,r+d_r}(v_{r,h_n})\subset f^{\bar{\kappa}-h_r}$. Now, we arrive at a contradiction.

By the discussions above, if $N>N_2$ and $n>N_3$, then a part of the path $\mbox{range}(\chi_N\ga_{r,h_n})$ satisfies (d1)--(d5).\ Reparametrize this part of curve, we could show the statements hold.

For a sequence $\{r_n\}^{\infty}_{n=1}\subset(0,d_{\bar{r}}/2)\setminus D^{\kappa^*}$ with $\lim_{n\to\infty}r_n=0$, by applying the same discussions above and Lemma \ref{Ch2diamconv}, one could show the statements. This completes the proof.
\end{proof}

\section{Existence of multibump solutions}

In this section, we study the existence of multibump solutions and show the results.

Given $k,N\in\NN$, set
\beq\label{Ch2f49}
\begin{split}
\mathcal{P}(k,N)&:=\{(p_1,...,p_k)\in\ZZ^k:\ p_{i+1}-p_{i}\geq 2N^2+4N,\ 1\leq i\leq k-1;\\
& p_i\ \mbox{is an integer multiple of}\ T,\ 1\leq i\leq k\},
\end{split}
\eeq
where $T$ is specified in (A1).

For any $P=(p_1,...,p_k)\in \mathcal{P}(k,N)$, we introduce the following intervals:
\beq\label{Ch2e7}
I_i:=\bigg(\frac{p_{i-1}+p_i}{2},\frac{p_i+p_{i+1}}{2}\bigg]\cap\ZZ,\ 1\leq i\leq k,
\eeq
\beq\label{Ch2f38}
M_i:=(p_{i}+N(N+1),p_{i+1}-N(N+1)]\cap\ZZ,\ 0\leq i\leq k,
\eeq
where $p_0=-\infty$ and $p_{k+1}=+\infty$.

For any subset $F\subset\ZZ$, $u,v\in l^2$, set
\beq\label{Ch2f13}
\lan u,v\ran_{F}:=\sum_{t\in F}(\lan \De u(t-1),\De v(t-1)\ran+\lan u(t),L(t)v(t)\ran),\ \|u\|_F:=\sqrt{\lan u,u\ran_F}.
\eeq

For any $P=(p_1,...,p_k)\in \mathcal{P}(k,N)$, define a functional $f_i:l^2\to\RR$ as follows:
\beq\label{Ch2f23}
f_i(u):=\frac{1}{2}\|u\|^2_{I_i}-\sum_{t\in I_i}V(t,u(t)),\ u\in l^2,\ 1\leq i\leq k.
\eeq
It is evident that
\beq\label{Ch2b4}
\|u\|_*^2=\sum_{i=1}^k\|u\|^2_{I_i},\ f(u)=\sum_{i=1}^kf_i(u).
\eeq
By applying the same method to show that $f$ is $C^1$, one has that
$f_i$ is $C^1$ and
\beq \label{Ch2diff3}
 D_*f_i(u)v=\lan u,v\ran_{I_i}-\sum_{t\in I_i}\lan V'_x(t,u(t)),v(t)\ran,\ 1\leq i\leq k.
\eeq

For any $P=(p_1,...,p_k)\in \mathcal{P}(k,N)$, set
\beq\label{Ch2e10}
\mathcal{B}^{P}_r(v):=\{u\in l^2:\ \|u-v(\cdot-p_i)\|_{I_i}<r,\ 1\leq i\leq k\},
\eeq
\beq\label{Ch2e11}
\mathcal{B}^{P}_{r,b}(v):=\{u\in \mathcal{B}^{P}_r(v):\ f_i(u)\leq b,\ 1\leq i\leq k\}.
\eeq
For any $\ep>0$, define
\beq\label{Ch2e12}
\mathcal{M}_{\ep}:=\{u\in l^2:\ \|u\|^2_{M_i}\leq\ep,\ 0\leq i\leq k\}.
\eeq

It follows from (A3) that there is $r_0\in(0,\min\{D_0/4,1/8\})$ such that for any $F\subset\ZZ$, one has that if $\|u\|_F\leq r_0$, then
\beq\label{Ch2f4}
\sum_{t\in F}V(t,u(t))\leq\frac{1}{8}\|u\|^2_F,\ \ \sum_{t\in F}\lan V'_x(t,u(t)),w(t)\ran\leq\frac{1}{8}\|u\|_F\|w\|_F,
\eeq
where $D_0$ is specified in \eqref{Ch2diam}.

Now, the main results are given as follows.

\begin{theorem}
Assume that (A1)--(A6) hold. Let $v_{\infty}$ be the critical point of $f$ given by Lemma \ref{Ch2critpoint}. Then for any $r>0$, there exists a positive integer $N$ such that for any positive integer $k$ and $P=(p_1,...,p_k)\in \mathcal{P}(k,N)$, one has that $\mathcal{C}\cap\mathcal{B}^P_r(v_{\infty})\neq\emptyset$.
\end{theorem}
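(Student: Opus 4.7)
The plan is to build a $k$-dimensional minimax from translates of the one-bump path supplied by Lemma \ref{Ch2critpoint}, and to realize a critical point inside $\mathcal{B}^P_r(v_\infty)$ via a localized pseudo-gradient deformation combined with a Miranda fixed-point trapping (Lemma \ref{Ch2fix}).

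Given $r>0$, invoke Lemma \ref{Ch2critpoint} to pick $n$ with $r_n + \|v_n - v_\infty\|_* < r/2$, fix a small $h>0$ and set $\gamma := \gamma_n$, $v := v_n$, $N := N_n$, so the path $\gamma$ is supported in $[-N,N]$ and lies in $\overline{\mathcal{B}}_{r_n}(v)\cap f^{\bar\kappa+h}$. For $P=(p_1,\ldots,p_k)\in\mathcal{P}(k,N)$, the translates $\gamma(\theta_i)(\cdot-p_i)$ have pairwise disjoint supports contained in the intervals $I_i$ of \eqref{Ch2e7} and disjoint from every $M_j$ of \eqref{Ch2f38}. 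Define the superposition
\[
\Phi_0(\theta):=\sum_{i=1}^k \gamma(\theta_i)(\cdot-p_i),\qquad \theta=(\theta_1,\ldots,\theta_k)\in [0,1]^k,
\]
so that by disjoint supports, (A1), and \eqref{Ch2b4} one has $f(\Phi_0(\theta))=\sum_i f_i(\Phi_0(\theta))\leq k(\bar\kappa+h)$ and $\Phi_0(\theta)\in \mathcal{B}^P_{r/2}(v_\infty)\cap\mathcal{M}_0$. Introduce the admissible class and the minimax level
\[
\Gamma_P:=\{\Phi\in C([0,1]^k,l^2):\ \Phi|_{\partial[0,1]^k}=\Phi_0|_{\partial[0,1]^k}\},\qquad c_P:=\inf_{\Phi\in\Gamma_P}\max_{\theta\in[0,1]^k}f(\Phi(\theta)),
\]
so that $c_P\leq k(\bar\kappa+h)$.

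For the lower bound, for any $\Phi\in\Gamma_P$ define $F=(F_1,\ldots,F_k):[0,1]^k\to\RR^k$ by
\[
F_i(\theta):=\langle \Phi(\theta)-v(\cdot-p_i),\,\gamma(1)(\cdot-p_i)-\gamma(0)(\cdot-p_i)\rangle_{I_i},
\]
suitably normalized. On the opposite faces $\{\theta_i=0\}$ and $\{\theta_i=1\}$, $\Phi$ agrees with $\Phi_0$, so the $i$-th bump coincides with $\gamma(0)(\cdot-p_i)$ or $\gamma(1)(\cdot-p_i)$ respectively, yielding the sign conditions of Lemma \ref{Ch2fix}. The lemma produces $\theta^\star$ at which every bump of $\Phi(\theta^\star)$ lies in $\mathcal{B}_{r_n}(v(\cdot-p_i))$ in the ``central'' position; the local mountain pass non-connectibility (d2) applied one bump at a time forces $f_i(\Phi(\theta^\star))\geq \bar\kappa$ for each $i$, and summation gives $c_P\geq k\bar\kappa$. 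To realize $c_P$ as a critical value inside $\mathcal{B}^P_r(v_\infty)$, suppose for contradiction $\mathcal{C}\cap\mathcal{B}^P_r(v_\infty)=\emptyset$. Following the templates of Lemmas \ref{Ch2vect2} and \ref{Ch2path3}, construct a locally Lipschitz pseudo-gradient field $\mathcal{Y}^P$ that decreases $f$, is uniformly bounded below on the slab $\mathcal{B}^P_{r,k(\bar\kappa+h)}(v_\infty)\cap\mathcal{M}_\epsilon\cap f^{c_P+\delta}_{c_P-\delta}$, and vanishes on a neighborhood of the image $\Phi_0(\partial[0,1]^k)$, which by (d1) sits at $f$-level at most $k\bar\kappa-kh_{r_n}/2<c_P-\eta$ for $h$ sufficiently small. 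The associated flow preserves $\Gamma_P$ since the boundary values are stationary, and, applied to an almost-minimizing $\Phi$ for a bounded time, produces a new $\tilde\Phi\in\Gamma_P$ with $\max_\theta f(\tilde\Phi(\theta))<c_P$, contradicting the definition of $c_P$.

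The main obstacle is the simultaneous control of the $k$ bumps under this flow: whenever a particular bump tries to leave $\mathcal{B}^P_r(v_\infty)$ or the tail estimate $\mathcal{M}_\epsilon$ is violated, one must show that $f$ has already dropped below $c_P-\delta$ at the escape time. This requires quantitative separation estimates for the cross terms between well-separated translates (exploiting $l^2$-decay of $v$ together with \eqref{Ch2f5} and the quadratic lower bound \eqref{Ch2c7}), coupled with a Miranda-type trapping argument applied to the quantities $\|\eta(s,\Phi(\theta))-v(\cdot-p_i)\|_{I_i}-r$ and $\|\eta(s,\Phi(\theta))\|_{M_j}-\epsilon^{1/2}$ to prevent the simultaneous escape of all bumps; this is presumably the content of the technical lemma deferred to Section 7.
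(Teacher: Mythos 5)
Your overall strategy---superpose translated copies of the one-bump path over the cube $[0,1]^k$, deform by a localized pseudo-gradient, and invoke Miranda---is in the right family, but your organization is genuinely different from the paper's, and the pivot of your argument (the lower bound $c_P\geq k\bar\kappa$) has a real gap. The trapping functions $F_i(\theta)=\langle\Phi(\theta)-v(\cdot-p_i),\gamma(1)(\cdot-p_i)-\gamma(0)(\cdot-p_i)\rangle_{I_i}$ do not carry the sign conditions Lemma \ref{Ch2fix} needs: on $\theta_i=0$ the $i$-th block of $\Phi$ equals $\gamma(0)(\cdot-p_i)$, so $F_i$ reduces to the fixed number $\langle\gamma(0)-v,\gamma(1)-\gamma(0)\rangle_{I_i}$, which has no reason to be nonnegative, and symmetrically on $\theta_i=1$; no normalization repairs this. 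More fundamentally, even granting a $\theta^\star$ at which every block is ``central,'' the inference ``(d2) forces $f_i(\Phi(\theta^\star))\geq\bar\kappa$ for each $i$'' does not follow: for an arbitrary competitor $\Phi\in\Gamma_P$ the slice $s\mapsto\Phi(\ldots,s,\ldots)|_{I_i}$ need not be a connecting path lying in $\mathcal{B}_{\bar r}(\bar v(\cdot-p_i))$, and (d2) constrains only whole admissible paths, not interior points of unrelated maps. So the minimax level cannot be bounded below the way you propose.

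The paper avoids both issues by never introducing a level $c_P$. It works by contradiction directly against (e2). Taking $G(\theta)=\sum_i\gamma(\theta_i)(\cdot-p_i)$ and the flow $\eta$ of $-W$ with $W$ as in Lemma \ref{Ch2vector}, one sets $\bar G(\theta)=\eta(\tau,G(\theta))$ and proves (Lemma \ref{Ch2pathg}) that $\bar G=G$ on $\partial L$, that $\bar G(\theta)\in\mathcal{M}_{4\ep}$, and---crucially---that for every $\theta$ some $i$ has $f_i(\bar G(\theta))\leq\rho_L$ (Step 4). Miranda is then used in the opposite direction from yours: if every $D_j=(f_j\circ\bar G)^{-1}([\rho_L+\ep,\infty))$ separated $L^0_j$ from $L^1_j$, the signed-distance functions $\sigma_j$ would produce a $\theta\in\cap_j D_j$ with all bumps simultaneously high, contradicting Step 4. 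Hence some $D_j$ fails to separate, yielding a path $\beta$ in $L$ with $\beta_j(0)=0$, $\beta_j(1)=1$, and $f_j(\bar G(\beta(s)))\leq\rho_L+\ep<\bar\kappa$ throughout (property (g3)). Truncating $\bar G(\beta(s))$ by the cutoff $\tilde\chi_{\tilde J}$ to the $j$-th block gives, after translating by $-p_j$, a continuous path joining $\gamma(0)$ and $\gamma(1)$, staying in $\mathcal{B}_{\bar r}(\bar v)$ with $f<\bar\kappa$, which directly contradicts (e2). No lower bound on any minimax value is needed.

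You correctly flagged the true difficulty---simultaneous control of the $k$ bumps and of the tails on the $M_i$'s under the flow---but in the paper this control is not achieved by trapping escape times; it is built into properties (I)--(V) of the vector field $W$ (Lemma \ref{Ch2vector}, proved via Proposition \ref{globvect}): (IV) makes $\mathcal{M}_{4\ep}$ forward invariant, (II)--(III) prevent a block from crossing the annulus or the level band $(f_i)^{\rho_R+\de}_{\rho_R}$, $(f_i)^{\rho_L+\de}_{\rho_L}$ without having already dropped below $\rho_L$, and (V), together with the standing hypothesis $\mathcal{C}\cap\mathcal{B}^P_{r_L}(v_n)=\emptyset$, gives the uniform decrease inside $\mathcal{B}^P_{r_L}(v_n)$. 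The countability in (A6), via Lemmas \ref{Ch2diffposi} and \ref{Ch2diffposi1}, is precisely what guarantees the gradient lower bounds on the annuli and level slabs that make these vector-field properties realizable. Your sketch leaves all of that as an acknowledged ``obstacle,'' but it is exactly where the proof lives.
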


We will show the statements by contradiction. There is $\rho\in(0,\bar{r})$ such that for any positive integer $N$, there exist a positive integer $k$
and $(p_1,...,p_k)\in \mathcal{P}(k,N)$ with $\mathcal{C}\cap\mathcal{B}^P_{\rho}(v_{\infty})=\emptyset$, where $\bar{r}$ is specified in \eqref{Ch2k7}. By Lemma \ref{Ch2critpoint}, there are
$\{r_n\}^{\infty}_{n=1}\subset\RR$, $\{v_n\}^{\infty}_{n=1}\subset\mathcal{C}(\bar{\kappa})$ satisfying
$\overline{\mathcal{B}}_{r_n}(v_n)\subset\mathcal{B}_{\bar{r}}(\bar{v})$, $\lim_{n\to\infty}r_n=0$, and $\lim_{n\to\infty}v_n=v_{\infty}\in\mathcal{C}(\bar{\kappa})$. Hence, if $n$ is large enough, then $\|v_n-v_{\infty}\|_*<\rho/2$, $r_n<\rho/2$, and $\mathcal{B}_{2r_n}(v_n)\subset\mathcal{B}_{\bar{r}}(\bar{v})$. So, for any $u\in \mathcal{B}^P_{r_n}(v_n)$ and any $i$, $1\leq i\leq k$, one has
\beqq
\|u-v_{\infty}(\cdot-p_i)\|_{I_i}\leq\|u-v_n(\cdot-p_i)\|_{I_i}+\|v_n(\cdot-p_i)-v_{\infty}(\cdot-p_i)\|_{I_i}\leq r_n+\frac{\rho}{2}<\frac{\rho}{2}+\frac{\rho}{2}=\rho,
\eeqq
which yields that $u\in\mathcal{B}^P_{\rho}(v_{\infty})$. Therefore, $\mathcal{B}_{r_n}^P(v_n)\subset\mathcal{B}_{\rho}^P(v_{\infty})$.

Since $\lim_{n\to\infty}r_n=0$ and $\lim_{n\to\infty}v_n=v_{\infty}\in\mathcal{C}(\bar{\kappa})$, when $n$ is sufficiently large, one has that $r_n\in(0,\min\{\frac{r_0}{8}.\frac{\sqrt{3}\|v_n\|_{*}}{4}\})\setminus D^{\kappa^*}$. Fix these $r_n$ and $v_n$. Let $b=\bar{\kappa}$, it follows from Proposition \ref{globvect} that we have the following results.

\begin{lemma} \label{Ch2vector}
Suppose $n$ is sufficiently large, fix $r_n$ and $v_n$, such that for any positive constants $r_{L},r,r_{R}$ satisfying
\beq\label{Ch2k8}
r_n-\frac{d_{r_n}}{2}\leq r_{L}<r<r_{R}<r_n<\min\bigg\{\frac{r_0}{8}.\frac{\sqrt{3}\|v_n\|_{*}}{4}\bigg\},
\eeq
any positive constants $\rho_{L},\rho_{R},\de$ with $[\rho_{L}-\de,\rho_{L}+2\de]\subset(0,\bar{\kappa})\setminus\Phi^{\kappa^*}$ and $[\rho_{R}-\de,\rho_{R}+2\de]\subset(\bar{\kappa},\kappa^*)\setminus\Phi^{\kappa^*}$, there is $\ep_1>0$, such that for any $\ep\in(0,\ep_1)$,
there are a positive constant $\mu$ and an positive integer $N^*(\ep)$ such that for any positive integer $N>N^*(\ep)$ and $k$ with $P=(p_1,...,p_k)\in \mathcal{P}(k,N)$, there exists a locally Lipschitz continuous vector field $W:l^2\to l^2$ satisfying the following conditions:
\begin{itemize}
\item [(I)] $ D_*f(u)W(u)\geq0$, $\|W(u)\|_{I_i}\leq 2$, $1\leq i\leq k$, for any $u\in l^2$; if $u\in l^2\setminus \mathcal{B}_{r_{R}}^P(v_n)$, then $W(u)=0$;
\item [(II)] if for some $i$, $1\leq i\leq k$, $u\in\mathcal{B}^{P}_{r,\rho_{R}+\de}(v_n)$ and $r_{L}\leq\|u-v_n(\cdot-p_i)\|_{I_i}\leq r$, then
 $$ D_*f_i(u)W(u)\geq\mu;$$
\item [(III)] $ D_*f_i(u)W(u)\geq0$, for any $u\in(f^{\rho_{R}+\de}_i\setminus f^{\rho_{R}}_i)\cup(f^{\rho_{L}+\de}_i\setminus f^{\rho_{L}}_i)$, $1\leq i\leq k$;
\item [(IV)] $\lan u,W(u)\ran_{M_i}\geq0$, if $u\in l^2\setminus\mathcal{M}_{4\ep}$, $0\leq i\leq k$;
\item [(V)]  if $\mathcal{C}\cap\mathcal{B}^P_{r_{L}}(v_n)=\emptyset$, then there is $\mu_k>0$ so that for any $u\in\mathcal{B}^P_{r_{L}}(v_n)$,
               $$ D_*f(u)W(u)\geq\mu_k.$$
\end{itemize}

\end{lemma}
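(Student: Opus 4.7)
The plan is to build $W$ as a Lipschitz-weighted combination of several pseudo-gradient vector fields, multiplied by an outer cutoff that vanishes outside $\mathcal{B}^P_{r_R}(v_n)$. The approach follows the general strategy of Caldiroli--Montecchiari \cite{CalMon}, adapted to the lattice functional $f$ and its interval-wise decomposition $f=\sum_i f_i$ of \eqref{Ch2b4}, with the additional burden of controlling the $M_i$-projection of $W$ so as to preserve (IV).

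First I would assemble the local building blocks. Applying Lemma \ref{Ch2diffposi} at radius $r_n$ (and the variant in Remark \ref{Ch2k40}) gives a positive lower bound $\mu_{r_n}$ on $\|D_*f\|_*$ on an $l^2$-annulus around every critical point at level at most $\kappa^*$; transferring this to each $f_i$ on its interval $I_i$, one obtains a locally Lipschitz pseudo-gradient $Y_i$ supported near $v_n(\cdot-p_i)$ in the $I_i$-norm, with $\|Y_i(u)\|_{I_i}\leq 2$ and $D_*f_i(u)Y_i(u)\geq \mu>0$ on the annulus $r_L\leq\|u-v_n(\cdot-p_i)\|_{I_i}\leq r$. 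Similarly Lemma \ref{Ch2diffposi1}, applied to each $f_i$ on the slabs between $\rho_L$ (resp.\ $\rho_R$) and $\rho_L+\delta$ (resp.\ $\rho_R+\delta$), furnishes locally Lipschitz ``level-trapping'' fields $Y^{L,i},Y^{R,i}$ on which $D_*f_i$ is bounded below and whose supports sit inside the corresponding slab.

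Next I would glue via Lipschitz weights $\alpha_i,\beta_i,\gamma_i\geq 0$ (functions of the quantities $\|u-v_n(\cdot-p_j)\|_{I_j}$ and $f_j(u)$) summing pointwise to at most $1$, and an outer cutoff $\chi_{out}$ supported in $\mathcal{B}^P_{r_R}(v_n)$, setting
$$W(u):=\chi_{out}(u)\sum_{i=1}^{k}\bigl(\alpha_i(u)Y_i(u)+\beta_i(u)Y^{L,i}(u)+\gamma_i(u)Y^{R,i}(u)\bigr).$$
Property (I) is then immediate from the pseudo-gradient normalization, the fact that each constituent is an ascent direction for $f$, and the disjointness of the supports across the intervals $I_i$. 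Property (II) is read off from $Y_i$ after arranging $\alpha_i\equiv 1$ on the prescribed annulus, where the level constraint $u\in\mathcal{B}^P_{r,\rho_R+\delta}(v_n)$ keeps us inside the domain where the ascent estimate on $Y_i$ is active. Property (III) follows analogously from the level-trapping pieces.

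Property (IV) exploits the spacing in $\mathcal{P}(k,N)$ of \eqref{Ch2f49}: since $v_n\in l^2$, the tail $\sum_{|t|>N(N+1)}|v_n(t)|^2$ tends to $0$ as $N\to\infty$, so each translate $v_n(\cdot-p_j)$ has vanishing $M_i$-mass, and $Y_i$ can be arranged via an additional Lipschitz cutoff to have essentially no $M_i$-footprint; choosing $N>N^{*}(\ep)$ large enough makes $\|W(u)\|_{M_i}$ smaller than any prescribed multiple of $\sqrt{\ep}$, so that on $l^2\setminus\mathcal{M}_{4\ep}$ the sign inequality $\lan u,W(u)\ran_{M_i}\geq 0$ follows by the decomposition \eqref{Ch2f5} plus Cauchy--Schwarz. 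For (V), I would argue by contradiction: if no $\mu_k$ worked, a sequence $u_m\in\mathcal{B}^P_{r_L}(v_n)$ with $D_*f(u_m)\to 0$ would, thanks to Lemma \ref{Ch2ccrit} combined with the diameter bound $2r_L<D_0/2$ and Lemma \ref{Ch2diamconv}, accumulate at a critical point of $f$ lying in $\overline{\mathcal{B}^P_{r_L}(v_n)}$, contradicting $\mathcal{C}\cap\mathcal{B}^P_{r_L}(v_n)=\emptyset$; this uniform lower bound is then folded into $Y_i$ through a weight equal to $1$ on $\mathcal{B}^P_{r_L}(v_n)$.

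The main obstacle will be reconciling (II) with (IV). Property (II) demands a positive ascent constant $\mu$, independent of $k$ and of the choice of $P$, on the $I_i$-annulus around each bump, while (IV) demands that the field be essentially invisible on the middle strips $M_i$; since an ascent direction for $f_i$ necessarily carries some $l^2$-weight, the interference between the pieces $Y_j$ and the strips $M_i$ with $j\neq i$ must be controlled quantitatively through the tail decay of $v_n$, and the dependence $N=N^{*}(\ep)$ has to be calibrated so that neither $\mu$ in (II) nor $\mu_k$ in (V) degenerates. Threading this quantitative tail estimate through the partition-of-unity gluing is the technical heart of the argument and is where most of the work of Section 7 will go.
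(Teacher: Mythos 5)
Your blueprint shares the high-level skeleton of Section~7 --- partition-of-unity gluing of local pseudo-gradient pieces with an outer cutoff supported in $\mathcal{B}^P_{r_R}(v_n)$ --- and your reductions of (II), (III), (V) to Lemmas~\ref{Ch2diffposi}, \ref{Ch2diffposi1}, \ref{Ch2diamconv} are essentially right. But your proposed argument for (IV) has a genuine gap, and it is not the one you flag at the end. You write that by cutting off the pseudo-gradient pieces one can make $\|W(u)\|_{M_i}$ smaller than a multiple of $\sqrt{\ep}$ and then ``the sign inequality $\lan u,W(u)\ran_{M_i}\geq 0$ follows by \eqref{Ch2f5} plus Cauchy--Schwarz.'' That last step does not work: Cauchy--Schwarz bounds $|\lan u,W(u)\ran_{M_i}|$ by $\|u\|_{M_i}\|W(u)\|_{M_i}$ and so controls the {\it magnitude} of the pairing, but smallness of $\|W(u)\|_{M_i}$ cannot by itself force the {\it sign} to be nonnegative. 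You need a structural reason for nonnegativity, not a smallness estimate. There is a second, related gap: if you try to make $Y_i$ literally vanish on the strips by composing with a lattice ($t$-dependent) cutoff $\psi$, you change the pairing, since $D_*f_i(u)(\psi Y_i(u))\neq \psi\cdot D_*f_i(u)Y_i(u)$ in general; one must estimate the error $D_*f_i(\bar\chi_{u,i}u)Q - D_*f_i(u)(\bar\chi_{u,i}Q)$, which is precisely the computation \eqref{Ch2g8} in the paper and which you do not mention.

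The paper's actual mechanism avoids both problems and is worth internalizing: the backbone of the field is not a glued pseudo-gradient but the adaptive scaling field $W_u=\sum_{l=0}^k h_l(u)\chi_{u,l}u$ of \eqref{Ch2f35}--\eqref{Ch2f36}, i.e.\ a localized, weighted copy of $u$ itself. Because $\chi_{u,l}\equiv 1$ on $M_l\subset A_{u,l}$, one gets the {\it exact} identity $\lan u, W_u\ran_{M_l}=h_l(u)\|u\|^2_{M_l}\geq\frac{1}{k+1}\|u\|^2_{M_l}$, so (IV) is automatic with no tail estimate. The ascent of $W_u$ against $D_*f$ is then established by the direct computation \eqref{Ch2g1}--\eqref{Ch2g4} plus \eqref{Ch2f4}. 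The auxiliary pseudo-gradient corrections $\tfrac12\bar\chi_{u,i}Q_{u,i}$ are supported where $\bar\chi_{u,i}\neq 0$, i.e.\ in $I_i\setminus (A_{u,i-1}\cup A_{u,i})$, which contains no part of $M_{i-1}\cup M_i$; hence these corrections contribute {\it zero} to $\lan u,W(u)\ran_{M_l}$, and (IV) is preserved when they are added. That is how (II) and (IV) coexist: the piece carrying the positive $M_l$-pairing is the rescaled $u$ (aligned with $u$ by construction), and the pseudo-gradient pieces responsible for (II), (III), (V) are forced off the strips by the adaptive cutoffs $\bar\chi_{u,i}$, with the discrepancy in the pairings controlled by \eqref{Ch2g8}. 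Your draft correctly identifies the tension between (II) and (IV) but proposes to resolve it with a magnitude estimate, whereas the resolution requires the sign-exact decomposition of $W$ just described.
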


In the proof of Proposition \ref{globvect}, the choice of the constant of $\mu$ is dependent on $r_n$, $d_{r_n}$, $r_L$, and $r_R$, and is not dependent on
$\rho_L$, $\rho_R$ and $\de$. So, we should choose the constants such that
\beq\label{Ch2k9}
\rho_{L}\in\bigg(\bar{\kappa}-\frac{1}{4}\min\{h_{r_n},\mu(r-r_{L})\},\bar{\kappa}\bigg),\ \rho_{R}\in\bigg(\bar{\kappa},\min\bigg\{\kappa^*,\bar{\kappa}+\frac{1}{4}\mu(r-r_{L})\bigg\}\bigg).
\eeq

It follows from Lemma \ref{Ch2critpoint} that there exists $\ga\in C([0,1],l^2)$
such that
\begin{itemize}
\item [(e1)] $\ga(0),\ga(1)\in\partial\mathcal{B}_{r_n}(v_n)\cap f^{\bar{\kappa}-\frac{h_{r_n}}{2}}$;
\item [(e2)] $\ga(0)$ and $\ga(1)$ are not $\bar{\kappa}$-connectible in $\mathcal{B}_{\bar{r}}(\bar{v})$;
\item [(e3)] $\mbox{range}\ga\subset\overline{\mathcal{B}}_{r_n}(v_n)\cap f^{\rho_{R}}$;
\item [(e4)] $\mbox{range}\ga\cap\mathcal{A}_{r_n-\frac{d_{r_n}}{2},r_n}(v_n)\subset f^{\bar{\kappa}-\frac{h_{r_n}}{2}}$;
\item [(e5)] the support of $\ga(s)$ is contained in $[-N_{\ga},N_{\ga}]$ for any $s\in[0,1]$, where $N_{\ga}$
is a positive integer independent on the choice of $s$.
\end{itemize}

Now, we choose $\ep$ such that
\beq\label{Ch2epb}
0<\ep<\min\bigg\{1,\ \ep_1,\ \frac{r_0}{2},\ \frac{\bar{\kappa}-\rho_{L}}{41},\ \frac{d_{r_n}}{3},\ \frac{d^2_{r_n}}{4},\ \frac{4}{5}(r_n^2-r_{R}^2),\ \frac{r_n^2}{4},\ \frac{4}{3}(r_n-r_{R})\bigg\},
\eeq
where $\ep_1$ is given by Lemma \ref{Ch2vector}.

For this given $\ep$, there is a positive integer $\hat{N}$ such that
\beq
\|v_n\|_{|t|\geq \hat{N}}^2\leq\ep.
\eeq
Set
\beq\label{Ch2f50}
N_0:=\max\bigg\{\bigg[\frac{5}{L_2}\bigg]+1,\ 5,\ N^*,\ \hat{N},\ N_{\ga}\bigg\}+1,
\eeq
where $N^*$ is introduced in Lemma \ref{Ch2vector}, $L_2$ is specified in \eqref{Ch2f20} and $[x]$ is the greatest integer function, which gives the largest integer less than or equal to $x$.

We assume that $N$ is bigger than twice of $N_0$, where $N$ is introduced in \eqref{Ch2f49}. Consider the sets introduced in \eqref{Ch2f49}, \eqref{Ch2e7}, and \eqref{Ch2f38}.

If $\mathcal{C}\cap\mathcal{B}^P_{r_n}(v_n)=\emptyset$, then there exists a locally Lipschitz continuous vector field
$W:l^2\to l^2$ which satisfies (I)-(V) of Lemma \ref{Ch2vector}. Consider the following initial value problem
\beqq\label{Ch2f59}
\left\{
  \begin{array}{ll}
    \frac{d\eta}{ds}(s,u)=-W(\eta(s,u)); \\
    \eta(0,u)=u,
  \end{array}
\right.
\eeqq
It follows from (I) of Lemma \ref{Ch2vector} that $\|W(u)\|_*\leq 2k$ for all $u\in l^2$. Therefore, there exists a unique solution
$\eta(\cdot,u)$ such that the solution is defined on the whole real line for all $u\in l^2$.

We introduce a function $G:L=[0,1]^k\to l^2$ as follows:
\beq\label{Ch2k12}
G(\theta):=\sum^k_{i=1}\ga(\theta_i)(\cdot-p_i),
\eeq
where $\theta=(\theta_1,...,\theta_k)\in [0,1]^k$.
The boundary of $L$ is equal to $\cup^k_{i=1}(L^0_i\cup L^1_i)$, where $L^0_i=\{\tht\in L:\ \tht_i=0\}$ and $L^1_i=\{\tht\in L:\ \tht_i=1\}$.
From (e5) of the properties of $\ga$, it follows that $G(\tht)|_{I_i}=\ga(\tht_i)(\cdot-p_i)|_{I_i}$ and the support of $\ga(\tht_i)(\cdot-p_i)$ is contained in
$[-N+p_i,N+p_i]\subset I_i\setminus(M_i\cup M_{i-1})$. As a consequence, one has
\beqq
f_i(G(\tht))=f(\ga(\tht_i)),\ \forall 1\leq i\leq k,\ \tht=(\tht_1,...,\tht_k)\in L.
\eeqq

\begin{lemma}\label{Ch2pathg}
There exists $\tau>0$ such that the function $\bar{G}(\tht)=\eta(\tau,G(\tht))$ is a continuous function from $L$ to $l^2$, which satisfies the following properties:
\begin{itemize}
\item [(g1)] $\bar{G}=G$ on the boundary of $L$;
\item [(g2)] $\bar{G}(\tht)\in \mathcal{M}_{4\ep}$, $\forall \tht\in L$;
\item [(g3)] there exist $j$, $1\leq j\leq k$, and a path $\beta\subset L$ such that $\beta(s)=(\beta_1(s),...,\beta_k(s))\in C([0,1],L)$ with $\beta_j(0)=0$ and $\beta_j(1)=1$ satisfying $\bar{G}(\beta(s))\in f^{\rho_{L}+\ep}_j$ for any $s\in[0,1]$.
\end{itemize}
\end{lemma}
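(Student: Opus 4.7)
My strategy is to prove the three properties separately, treating (g1) and (g2) as relatively automatic consequences of the vector field's construction while saving the bulk of the effort for (g3). For (g1), I would verify that on the boundary $\partial L$ the flow of $W$ is trivial. If $\tht_j\in\{0,1\}$, then $\ga(\tht_j)\in\partial\mathcal{B}_{r_n}(v_n)$, so $\|\ga(\tht_j)-v_n\|_*=r_n$. Since $\ga(\tht_j)$ is supported in $[-N_\ga,N_\ga]\subset[-N_0,N_0]$ while $v_n$ concentrates in $[-\hat N,\hat N]$ with tail $\|v_n\|_{|t|\ge\hat N}^2\le\ep$, I would estimate
\[
\|G(\tht)-v_n(\cdot-p_j)\|_{I_j}^2 \ \ge\ \|\ga(\tht_j)-v_n\|_*^2 - \|v_n(\cdot-p_j)\|_{I_j^c}^2 - \mbox{(boundary terms)}\ \ge\ r_n^2 - C\ep,
\]
which exceeds $r_R^2$ by the choice $\ep<\tfrac{4}{5}(r_n^2-r_R^2)$ from \eqref{Ch2epb}. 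Hence $G(\tht)\notin\mathcal{B}^P_{r_R}(v_n)$, so $W(G(\tht))=0$ by (I) of Lemma \ref{Ch2vector}, and $\eta(s,G(\tht))\equiv G(\tht)$ for every $s$, giving $\bar G(\tht)=G(\tht)$ for any $\tau>0$.

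For (g2) I would first note that $G(\tht)|_{M_i}=0$ for every $\tht\in L$ and every $0\le i\le k$, because each translate $\ga(\tht_i)(\cdot-p_i)$ is supported in $[p_i-N_\ga,p_i+N_\ga]$, whereas $M_i\subset(p_i+N(N+1),p_{i+1}-N(N+1))$ is disjoint from all these supports once $N>N_\ga$. Thus $G(\tht)\in\mathcal M_0\subset\mathcal M_{4\ep}$. Invariance of $\mathcal M_{4\ep}$ along the flow then follows by a first-exit-time argument: at a putative first exit time $s^*$ we would have $\|\eta(s^*)\|_{M_i}^2=4\ep$ for some $i$ and $\eta(s)\notin\mathcal M_{4\ep}$ for $s>s^*$ close to $s^*$, but property (IV) forces $\tfrac{d}{ds}\|\eta\|_{M_i}^2=-2\lan\eta,W(\eta)\ran_{M_i}\le0$ there, a contradiction. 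So $\bar G(\tht)\in\mathcal M_{4\ep}$ for every $\tht\in L$ and every $\tau>0$.

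The main obstacle is (g3), which I would handle by contradiction using Miranda's fixed point theorem (Lemma \ref{Ch2fix}). Suppose the conclusion of (g3) fails for every $j$; then each set $V_j:=\{\tht\in L:f_j(\bar G(\tht))>\rho_L+\ep\}$ separates $L^0_j$ from $L^1_j$ in $L$. I would build a continuous map $F=(F_1,\ldots,F_k):L\to\RR^k$ whose $j$-th component changes sign precisely when $\bar G(\tht)$ crosses the separating level on the $j$-th block, verifying the boundary sign conditions of Lemma \ref{Ch2fix} via (g1) and the bound $f_j(G(\tht))=f(\ga(\tht_j))\le\bar\kappa-h_{r_n}/2<\rho_L+\ep$ for $\tht_j\in\{0,1\}$. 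Miranda's theorem then produces $\tht^*\in L$ with $f_j(\bar G(\tht^*))>\rho_L+\ep$ for all $j$, and with $\bar G(\tht^*)$ located in the annular region where (II) of Lemma \ref{Ch2vector} applies. Integrating $\tfrac{d}{ds}f_j(\eta(s,G(\tht^*)))\le-\mu$ along the flow over time $\tau$, and combining with the initial bound $f_j(G(\tht^*))\le\rho_R$ coming from (e3), yields $f_j(\bar G(\tht^*))\le\rho_R-\mu\tau$. Choosing $\tau$ so that $\rho_R-\mu\tau<\rho_L+\ep$ (compatible with (g1) and (g2), which hold for all $\tau>0$) then contradicts $\tht^*\in V_j$.

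The delicate point is crafting $F$ so that \emph{both} the Miranda sign conditions and the location of $\tht^*$ within the annulus $\{r_L\le\|\bar G(\tht^*)-v_n(\cdot-p_j)\|_{I_j}\le r\}$ (where (II) supplies the uniform gradient lower bound $\mu$) are simultaneously guaranteed. I would use (III) of Lemma \ref{Ch2vector} to control $f_j$ in the transitional bands $[\rho_L,\rho_L+\de]$ and $[\rho_R,\rho_R+\de]$, and (V) to dispose of the trivial case in which $\mathcal C\cap\mathcal B^P_{r_L}(v_n)\neq\emptyset$; the calibration in \eqref{Ch2k8}--\eqref{Ch2k9} is arranged precisely so that the available descent $\mu(r-r_L)$ exceeds $\rho_R-\rho_L$, closing the argument.
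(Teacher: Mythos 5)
Your handling of (g1) and (g2) is correct and matches the paper's proof (the support/norm estimate forcing $G(\tht)\notin\mathcal{B}^P_{r_R}(v_n)$ on $\partial L$, and the forward invariance of $\mathcal{M}_{4\ep}$ via property (IV)). The contradiction set-up and the Miranda argument for (g3) (the separating sets $V_j$, the signed-distance function $\si_j$, the boundary sign verification via (g1) and (e1)) are also essentially the paper's.

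The descent argument is where there is a genuine gap. You write ``Integrating $\tfrac{d}{ds}f_j(\eta(s,G(\tht^*)))\le-\mu$ along the flow over time $\tau$,'' and choose $\tau$ so that $\rho_R-\mu\tau<\rho_L+\ep$. But property (II) of Lemma \ref{Ch2vector} only gives $D_*f_j(u)W(u)\ge\mu$ when $u\in\mathcal{B}^{P}_{r,\rho_R+\de}(v_n)$ and $r_L\le\|u-v_n(\cdot-p_j)\|_{I_j}\le r$; outside this annular shell the only bound is $D_*f_j(u)W(u)\ge 0$ (and $W=0$ once the trajectory leaves $\mathcal{B}^P_{r_R}(v_n)$). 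Hence the flow cannot decrease $f_j$ at rate $\mu$ for all of $[0,\tau]$, and taking $\tau$ large does not give you the drop $\mu\tau$. You also assert that Miranda's $\tht^*$ can be arranged to lie where (II) applies, which does not follow from $\si(\tht^*)=0$.

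The paper closes this gap with a two-stage crossing argument (Step 3). First, $\tau:=\lambda/\omega$ with $\lambda=2\sup_{u\in\mathcal{B}^P_{r_L,\rho_R}(v_n)}f(u)<\infty$ and $\omega=\min\{\mu,\mu_k\}$: if the trajectory stayed in $\mathcal{B}^P_r(v_n)\cap\bigcap_i f^{\rho_R}_i$ for all of $[0,\tau]$, then (II) together with (V) would force $D_*f\cdot W\ge\omega$ throughout, giving $f(u)-f(\eta(\tau,u))\ge\tau\omega=\lambda$, which is absurd. Therefore the trajectory exits $\mathcal{B}^P_r(v_n)$ before time $\tau$. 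Second, as it exits it must cross, for some block $j$, from $\|\cdot-v_n(\cdot-p_j)\|_{I_j}=r_L$ to $=r$; on that crossing segment $[t_1,t_2]$, (II) gives $f_j(\eta(t_2,u))\le\rho_R-\mu(t_2-t_1)$, while $\|W\|_{I_j}\le 2$ forces $t_2-t_1\ge(r-r_L)/2$, so $f_j(\eta(t_2,u))\le\rho_R-\tfrac12\mu(r-r_L)<\rho_L$ by \eqref{Ch2k9}. Forward invariance of $f^{\rho_L}_j$ (Step 2) then gives $f_j(\eta(\tau,u))\le\rho_L$. This is the mechanism that makes $\tau$ independent of $u$. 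In addition, the case $G(\tht)\notin\mathcal{B}^P_{r_L}(v_n)$ needs to be handled separately (paper's Step 4): there $\|\ga(\tht_i)-v_n\|_*\ge r_n-\tfrac12 d_{r_n}$ for some $i$, so (e4) already gives $f_i(G(\tht))\le\bar\kappa-\tfrac12 h_{r_n}\le\rho_L$, and invariance finishes it. Your proposal does not address this case. Without both the crossing argument and the second case, (g3) is not established.
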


\begin{proof}

First, it is to show (g1).

If $\tht$ is on the boundary of $L$, then there exists $j$, $1\leq j\leq k$, such that either $\tht_j=0$ or $\tht_j=1$. Without loss of generality,
assume that $\tht_j=0$. It follows from (e5), \eqref{Ch2f50}, and $N>2N_0$ that  $G(\tht)|_{I_j}=\ga(0)(\cdot-p_j)|_{I_j}$. This, together with \eqref{Ch2epb} and the support of $\ga(\tht_j)(\cdot-p_j)$ is contained in
$[-N+p_j,N+p_j]\subset I_j\setminus(M_j\cup M_{j-1})$, yields that
\beqq
\|G(\tht)-v_n(\cdot-p_j)\|^2_{I_j}=\|\ga(0)-v_n\|_*^2-\| v_n(\cdot-p_j)\|^2_{\ZZ\setminus I_j}\geq r^2_n-\ep\geq r_{R}^2,
\eeqq
where \eqref{Ch2epb} is used. So, $G(\tht)\in l^2\setminus \mathcal{B}_{r_{R}}^{P}(v)$. This, together with (I) of Lemma \ref{Ch2vector}, implies that
$G=\bar{G}$ on the boundary of $L$.

Next, it is to show (g2). This can be proved by the method used in the proof of (vii) in \cite{CalMon}. For the completeness and the convenience, the proof is given here.

Since $\mbox{supp}(G(\tht))\subset\cup^k_{i=1}I_i\setminus(M_i\cup M_{i-1})$, one has $\|G(\tht)\|_{M_i}=0$, $0\leq i\leq k$.\ Hence,\ $G(\tht)\in\mathcal{M}_{4\ep}$.\
To show (g2), it suffices to show that the set $\mathcal{M}_{4\ep}$ is forward invariant under the flow $\eta(\cdot,u)$, that is,  $\eta(t,\mathcal{M}_{4\ep})\subset\mathcal{M}_{4\ep}$. We show this by contradiction, suppose that there are $u\in\mathcal{M}_{4\ep}$,\ $(t_1,t_2)\subset\RR$,\ $0\leq j\leq k$\ such that\ $\|\eta(t_1,u)\|^2_{M_j}=16\ep$, and for any $t\in(t_1,t_2)$, $\|\eta(t,u)\|^2_{M_j}>16\ep$.\ So,\ by (IV) of Lemma \ref{Ch2vector},
\beqq
\frac{d}{dt}\|\eta(t,u)\|^2_{M_j}=-2\lan \eta(t,u), W(\eta(t,u))\ran_{M_j}\leq0,
\eeqq
which yields that
\beqq
\|\eta(t_2,u)\|^2_{M_j}\leq\|\eta(t_1,u)\|^2_{M_j}=16\ep.
\eeqq
Now, we arrive at a contradiction. Hence,\ (g2) holds

Now, we show (g3). We split it into several steps.

{\bf Step 1.} The functional $f$ sends bounded sets into bounded sets.

Consider a bounded set $A=\{u:\ u\in l^2,\ \|u\|_*\leq B_1\}$. It is evident that there is a positive constant $B_2$ such that $|u(t)|\leq B_2$ for any $u\in A$ and any $t\in\ZZ$. By \eqref{Ch2c3}, there is $\de>0$ so that $|V(t,x)|\leq|x|^2$ for any $|x|\leq\de$. So, for any $u\in A$, by \eqref{Ch2b2} and \eqref{Ch2c2}, one has
\beqq
\begin{split}
|f(u)|=&\bigg|\frac{1}{2}\|u\|_*^2-\sum_{t\in\ZZ}V(t,u(t))\bigg|\leq\frac{B^2_1}{2}+\sum_{t\in\ZZ,|u(t)|\leq\de}V(t,u(t))+\sum_{t\in\ZZ,|u(t)|>\de}V(t,u(t))\\
\leq&\frac{B^2_1}{2}+\sum_{t\in\ZZ,|u(t)|\leq\de}|u(t)|^2+B_3\sum_{t\in\ZZ,|u(t)|>\de}\frac{|u(t)|^2}{\de^2}\\
\leq&\frac{B^2_1}{2}+L_1\|u\|_*^2+\frac{B_3}{\de^2}L_1\|u\|_*^2=\bigg(L_1+\frac{B_3}{\de^2}L_1+\frac{1}{2}\bigg)B^2_1,
\end{split}
\eeqq
where $B_3=\max_{t\in\ZZ,|x|\leq B_2}|V(t,x)|$, by (A1), it is a finite number

The statements of Steps 2--4 can be derived by the method used in the proof of Lemmas 5.4--5.6 in \cite{CalMon}. For the convenience and completeness, the whole proof is provided here.

{\bf Step 2.} For any $i$, $1\leq i\leq k$, and any $t\geq0$, one has that $\eta(t,f^{\rho_{R}}_i)\subset f^{\rho_{R}}_i$ and $\eta(t,f^{\rho_{L}}_i)\subset f^{\rho_{L}}_i$.

By contradiction, suppose that there exist $\bar{t}\in\RR^+$ and $u\in f^{\rho_{R}}_i$ such that $\eta(\bar{t},u)\not\in f^{\rho_{R}}_i$.\ Hence, there is $(t_1,t_2)\subset[0,\bar{t}]$ such that
$f_i(\eta(t_1,u))=\rho_{R}$, $f_i(\eta(t_2,u))>\rho_{R}$, and for any $t\in(t_1,t_2)$,\ $\eta(t,u)\in f^{\rho_{R}+\de}_i\setminus f^{\rho_{R}}_i$.\ This, together with (I) and (IV) of Lemma \ref{Ch2vector}, implies that
\beqq
f_i(\eta(t_2,u))-\rho_{R}=-\int^{t_2}_{t_1} D_*f_i(\eta(t,u))(W(\eta(t,u))) dt\leq0.
\eeqq
This is a contradiction. Similarly, we could show that $\eta(t,f^{\rho_{L}}_i)\subset f^{\rho_{L}}_i$.\

{\bf Step 3.} There exists $\tau>0$ such that for any $u\in\mathcal{B}^{P}_{r_{L},\rho_{R}}(v_n)$, there is $j$,\ $1\leq j\leq k$, such that
$\eta(\tau,u)\in f^{\rho_{L}}_j$,\ $j$ is dependent on the choice of $u$,\ $\tau$ does not depend on the choice of $u$.

Set $\lambda:=2f(\mathcal{B}^{P}_{r_{L},\rho_{R}}(v_n))$. Since $\mathcal{B}^{P}_{r_{L},\rho_{R}}(v_n)$ is a bounded set and the conclusion of Step 1, one has that $\ld<\infty$. Denote $\omega:=\min\{\mu,\mu_k\}$,\ $\tau:=\frac{\ld}{\om}$.\

By the conclusion of Step 2, for any $u\in\mathcal{B}^{P}_{r_{L},\rho_{R}}(v_n)$ and any $t\geq0$, $\eta(t,u)\in\cap^k_{i=1}f^{\rho_{R}}_i$. Next, it is to show that there is $\bar{t}\in(0,\tau)$ such that $\eta(\bar{t},u)\not\in\mathcal{B}^{P}_{r}(v_n)$. By contradiction, by (II)\ and\ (V) of Lemma \ref{Ch2vector},
\beqq
f(\eta(\tau,u))-f(u)=-\int^{\tau}_{0} D_*f(\eta(t,u))(W(\eta(t,u))) dt\leq-\tau\om=-\ld.
\eeqq
Hence,\ $f(u)-f(\eta(\tau,u))\geq\ld$.\ This contradicts the definition of $\ld$.

Hence, for any $u\in\mathcal{B}^{P}_{r_{L},\rho_{R}}(v_n)$, there are $j$,\ $1\leq j\leq k$, and $[t_1,t_2]\subset(0,\tau)$\ such that\ $\|\eta(t_1,u)-v(\cdot-p_j)\|_{I_j}=r_{L}$,\
$\|\eta(t_2,u)-v(\cdot-p_j)\|_{I_j}=r$,\ and for any $t\in(t_1,t_2)$,\ $r_{L}<\|\eta(t,u)-v(\cdot-p_j)\|_{I_j}<r$.\
By Step 2, for any $t\geq0$,\ $\eta(t,u)\in f^{\rho_{R}}_j$. This, together with (II) of Lemma \ref{Ch2vector}, yields that
\beq\label{Ch2k10}
f_j(\eta(t_2,u))\leq f_j(\eta(t_1,u))-\int^{t_2}_{t_1} D_*f_j(\eta(t,u))(W(\eta(t,u))) dt\leq\rho_{R}-\mu(t_2-t_1).
\eeq
It follows from $\|W(\eta(t,u))\|_{I_j}\leq2$ that
\beq\label{Ch2k11}
r-r_{L}\leq\|\eta(t_2,u)-\eta(t_1,u)\|_{I_j}\leq\int^{t_2}_{t_1}\|W(\eta(t,u))\|_{I_j}dt\leq2(t_2-t_1),
\eeq
By \eqref{Ch2k9}, \eqref{Ch2k10}, and \eqref{Ch2k11},
\beqq
f_j(\eta(t_2,u))\leq\rho_{R}-\frac{1}{2}\mu(r-r_{L})<\rho_{L}.
\eeqq
By Step 2, for any $t\geq t_2$,\ $\eta(t,u)\in f^{\rho_{L}}_j$.\ In particular,\ $\eta(\tau,u)\in f^{\rho_{L}}_j$.\

{\bf Step 4.} For any $\tht\in L$, there is $i$, $1\leq i\leq k$, such that $f_i(\bar{G}(\tht))\leq\rho_{L}$.

First, it is to study the situation that $G(\tht)\in\mathcal{B}_{r_{L}}^P(v_n)$. By (e3) of the properties of $\ga$ and the construction of $G$ in \eqref{Ch2k12}, one has that $G(\tht)\in\cap^k_{j=1}f^{\rho_{R}}_j$.\ This, together with the conclusion of Step 3, yields that the statement holds in this situation, where the constant $\tau$ is specified in Step 3 and $\bar{G}(\tht)=\eta(\tau,G(\tht))$.

Second, it is to consider the situation that $G(\tht)\not\in\mathcal{B}_{r_{L}}^P(v_n)$. There is $i$, $1\leq i\leq k$, such that,
\beqq
r_n-\frac{1}{2}d_{r_n}\leq r_{L}\leq\|G(\tht)-v_n(\cdot-p_i)\|_{I_i}=\|\ga(\tht_i)(\cdot-p_i)-v_n(\cdot-p_i)\|_{I_i}\leq\|\ga(\tht_i)-v_n\|_{*},
\eeqq
where \eqref{Ch2k8} is used. This, together with (e4) of $\ga$, yields that,
\beqq
f_i(G(\tht))=f_i(\ga(\tht_i)(\cdot-p_i))=f(\ga(\tht_i))\leq\bar{\kappa}-\frac{1}{2}h_{r_n}\leq \rho_{L}.
\eeqq
By Step 2, for any $t\geq0$,\ $\eta(t,G(\tht))\in f^{\rho_{L}}_i$.\ Hence, the statement of Step 4 holds.

{\bf Step 5.} It is to show (g3).

We show this by contradiction. Set
\beqq
D_i:=(f_i\circ\bar{G})^{-1}([\rho_{L}+\ep,+\infty)),\ 1\leq i\leq k.
\eeqq
So, the set $D_i$ separates the set $L^0_i$ from $L^1_i$ in $L$. Assume $C_i$ is the component of $L\setminus D_i$ which contains $L^1_i$, and set
\beqq
\si_i(\tht):=\left\{
  \begin{array}{ll}
    \mbox{dist}(\tht,D_i), & \hbox{if}\ \tht\in L\setminus C_i, \\
    -\mbox{dist}(\tht,D_i), & \hbox{if}\ \tht\in C_i.
  \end{array}
\right.
\eeqq
So, $\si_i$ is a continuous function on $L$ with $\si_i|_{L^0_i}\geq0$, $\si_i|_{L^1_i}\leq0$, and $\si_i(\tht)=0$ if and only if $\tht\in D_i$.
For the function $\si=(\si_1,...,\si_k)$, by Lemma \ref{Ch2fix}, there exists $\tht\in L$ such that $\si(\tht)=0$, which implies that
$\cap^k_{i=1}D_i\neq\emptyset$. This contradicts the conclusion of Step 4.
\end{proof}

Finally, we give the proof of the main results of this paper.

Now, for the $j$ given in (g3) of Lemma \ref{Ch2pathg}, since $I_j\setminus(M_j\cup M_{j-1})$ is a bounded subset, suppose $a$ and $b$ are the minimal and maximal integers contained in $I_j\setminus(M_j\cup M_{j-1})$, respectively. Set $\tilde{J}:=(I_j\setminus(M_j\cup M_{j-1}))\cup([a-1-N_0,b+1+N_0]\cap\ZZ)$, where $N_0$ is specified in \eqref{Ch2f50}. We introduce a characteristic function on $\tilde{J}$ as follows,
\beqq
\tilde{\chi}_{\tilde{J}}(t)=\left\{
  \begin{array}{ll}
    1, & \hbox{if}\ t\in [a-1,b+1]\cap\ZZ\\
    \frac{N_0-l}{N_0}, & \hbox{if}\ t=b+1+l,\ 1\leq l\leq N_0\\
    \frac{N_0-l}{N_0}, & \hbox{if}\ t=a-1-l,\ 1\leq l\leq N_0\\
    0, & \hbox{otherwise.}
  \end{array}
\right.
\eeqq
So, $|\De \tilde{\chi}_{\tilde{J}}(t)|\leq1/N_0$ for any $t\in\ZZ$. It is evident that $\tilde{J}\subset I_j$, since we assume that $N$ is big enough. Define a path $g=g(s)=\tilde{\chi}_{\tilde{J}}\bar{G}(\be(s))\in C([0,1],l^2)$, $s\in[0,1]$. Since $\mbox{supp}(\ga(s)(\cdot-p_j))\subset I_j\setminus(M_j\cup M_{j-1})$, (g1), and (g2),
one has $g(0)=\tilde{\chi}_{\tilde{J}}\bar{G}(\be(0))=\tilde{\chi}_{\tilde{J}} G(\be(0))=\ga(0)(\cdot-p_j)$, $g(1)=\tilde{\chi}_{\tilde{J}}\bar{G}(\be(1))=\tilde{\chi}_{\tilde{J}} G(\be(1))=\ga(1)(\cdot-p_j)$.

Next, we show that for any subset $F\subset \ZZ$ and any $u\in l^2$, one has
\beq\label{Ch2f51}
\|\tilde{\chi}_{\tilde{J}}u\|^2_{F}\leq 2\|u\|^2_{F\cap I_j}.
\eeq

By direct calculation, $N$ is bigger than twice of $N_0$, one has
\beq\label{Ch2f58}
\begin{split}
&\lan\De(\tilde{\chi}_{\tilde{J}}(t-1)u(t-1)),\De(\tilde{\chi}_{\tilde{J}}(t-1) u(t-1))\ran\\
=&\lan \tilde{\chi}_{\tilde{J}}(t) u(t)-\tilde{\chi}_{\tilde{J}}(t-1) u(t-1),
\tilde{\chi}_{\tilde{J}}(t) u(t)-\tilde{\chi}_{\tilde{J}}(t-1) u(t-1)\ran\\
=&\lan(\De \tilde{\chi}_{\tilde{J}}(t-1)) u(t),(\De \tilde{\chi}_{\tilde{J}}(t-1)) u(t)\ran
+\lan (\De \tilde{\chi}_{\tilde{J}}(t-1)) u(t), \tilde{\chi}_{\tilde{J}}(t-1)(\De u(t-1))\ran\\
+&\lan \tilde{\chi}_{\tilde{J}}(t-1)(\De u(t-1)),(\De \tilde{\chi}_{\tilde{J}}(t-1)) u(t)\ran
+\lan \tilde{\chi}_{\tilde{J}}(t-1)(\De u(t-1)), \tilde{\chi}_{\tilde{J}}(t-1)(\De u(t-1)) \ran\\
\leq&\frac{1}{N_0^2}| u(t)|^2+\frac{2}{N_0}| u(t)||\De u(t-1)|+|\De u(t-1)|^2\\
\leq&\bigg(\frac{1}{N_0^2}+\frac{4}{N_0}\bigg)| u(t)|^2+\bigg(1+\frac{4}{N_0}\bigg)|\De u(t-1)|^2.
\end{split}
\eeq
This, together with \eqref{Ch2f50}, yields that \eqref{Ch2f51} holds.

By similar calculation, for any $u\in l^2$, one has
\beq\label{Ch2f52}
\|(1-\tilde{\chi}_{\tilde{J}})u\|^2_{I_j}\leq 2\|u\|^2_{I_j\cap(M_j\cup M_{j-1})},
\eeq
which implies that
\beq\label{Ch2f53}
\| (1-\tilde{\chi}_{\tilde{J}})v_n(\cdot-p_j)\|^2_{I_j}\leq2\|v_n(\cdot-p_j)\|^2_{I_j\cap(M_j\cup M_{j-1})}\leq2\|v_n\|^2_{|t|\geq \hat{N}}\leq2\ep.
\eeq

Next, we show that the image of $g$ is contained in $B_{\bar{r}}(\bar{v}(\cdot-p_j))$.

It follows from $\mbox{supp}(g(s))\subset I_j$ that
\beq\label{Ch2f54}
\|g(s)-v_n(\cdot-p_j) \|_*^2=\|g(s)-v_n(\cdot-p_j) \|^2_{I_j}+\| v_n(\cdot-p_j)\|^2_{\ZZ\setminus I_j}.
\eeq
And,
\beq\label{Ch2f46}
\| v_n(\cdot-p_j)\|^2_{\ZZ\setminus I_j}=\| v_n\|^2_{\ZZ\setminus (I_j-p_j)}\leq\| v\|^2_{|t|\geq \hat{N}}\leq\ep.
\eeq

Set
\beqq
\Ld_0:=\{s\in[0,1]:\ \|G(\be(s))-v_n(\cdot-p_j)\|_{I_j}>r_{R}\},\ \Ld_1:=[0,1]\setminus\Ld_0.
\eeqq

Now, we split our discussions into two steps:

{\bf Step 1.} We study the case that $s\in\Ld_0$.

Since $\be(s)=(\be_1(s)),...,\be_k(s))$, when $s=0$, $\be_j(0)=0$. So, $G(\be(0))|_{I_j}=\ga(0)(\cdot-p_j)|_{I_j}$,
\beqq
\begin{split}
&\|G(\be(0))-v_n(\cdot-p_j)\|^2_{I_j}=\| \ga(0)(\cdot-p_j)-v_n(\cdot-p_j)\|_*^2-\|\ga(0)(\cdot-p_j)-v_n(\cdot-p_j)\|^2_{\ZZ\setminus I_j}\\
=&\| \ga(0)(\cdot-p_j)-v_n(\cdot-p_j)\|_*^2-\|v_n(\cdot-p_j)\|^2_{\ZZ\setminus I_j}\geq r_n^2-\ep\geq r_n^2-\frac{d^2_{r_n}}{4}
> r_{R}^2,
\end{split}
\eeqq
where (e1) and \eqref{Ch2epb} are used. This yields that $\Ld_0\neq\emptyset$.

For the vector field given by Lemma \ref{Ch2vector}, it follows from (I) that the sets $l^2\setminus \mathcal{B}_{r_{R}}^{P}(v)$ and $\mathcal{B}_{r_{R}}^{P}(v)$ are invariant sets under the flow. So, $\bar{G}(\be(s))=G(\be(s))$ for any $s\in\Ld_0$.
So, $$g(s)=\tilde{\chi}_{\tilde{J}}\bar{G}(\be(s))=\tilde{\chi}_{\tilde{J}}G(\be(s))
=\tilde{\chi}_{\tilde{J}}\sum^k_{i=1}\ga(\be_i(s))(\cdot-p_i)=\ga(\be_j(s))(\cdot-p_j),$$
$$\|g(s)- v_n(\cdot-p_j)\|_{I_j}=\|\ga(\be_j(s))(\cdot-p_j)-v_n(\cdot-p_j)\|_{I_j}=\|G(\be(s))-v_n(\cdot-p_j)\|_{I_j}>r_{R}.$$
Hence,
$$\|\ga(\be_j(s))-v_n\|_*\geq\|\ga(\be_j(s))(\cdot-p_j)-v_n(\cdot-p_j)\|_{I_j}>r_{R}\geq r_n-\frac{d_{r_n}}{2}.$$

On the other hand, by (e3), one has $\|\ga(\be_j(s))-v_n\|_*\leq r_n$. This, together with (e4) and the assumption that $p_j$ is a multiple of $T$, implies that
\beq\label{Ch2f57}
g(s)=\ga(\be_j(s))(\cdot-p_j)\in f^{\bar{\kappa}-\frac{h_{r_n}}{2}}.
\eeq

As a consequence, we know that $\Ld_0\subsetneqq[0,1]$, otherwise, this contradicts (e2).

{\bf Step 2.} We investigate the case that $s\in\Ld_1$.
\beqq
\|g(s)-v_n(\cdot-p_j) \|^2_{I_j}\leq(\|\tilde{\chi}_{\tilde{J}}(\bar{G}(\be(s))-v_n(\cdot-p_j))\|_{I_j}+\|(1-\tilde{\chi}_{\tilde{J}})v_n(\cdot-p_j)\|_{I_j})^2.
\eeqq
By \eqref{Ch2f51} and the invariance of the set $\mathcal{B}_{r_{R}}^{P}(v)$ under the flow, one has
\beq\label{Ch2f55}
\|\tilde{\chi}_{\tilde{J}}(\bar{G}(\be(s))-v_n(\cdot-p_j))\|^2_{I_j}\leq2 \|\bar{G}(\be(s))-v_n(\cdot-p_j)\|^2_{I_j}\leq 2r_{R}^2.
\eeq
It follows from  \eqref{Ch2f54}, \eqref{Ch2f46}, and \eqref{Ch2f55} that
\beqq
\|g(s)-v_n(\cdot-p_j)\|_*^2\leq2(r_{R}+\ep^{1/2})^2+\ep<4r_n^2,
\eeqq
where \eqref{Ch2epb} is used.
Hence, $g(s)\in\mathcal{B}_{2r_n}(v_n(\cdot-p_j))\subset\mathcal{B}_{\bar{r}}(\bar{v}(\cdot-p_j))$.

By the definition of $f_j$ in \eqref{Ch2f23}, one has for any $s\in[0,1]$,
\beq\label{Ch2f42}
\begin{split}
&f(g(s))=f_j(g(s))=\frac{1}{2}\|g(s)\|^2_{I_j}-\sum_{t\in I_j}V(t,g(s)(t))\\
=&\frac{1}{2}\|g(s)\|^2_{I_j\setminus (M_j\cup M_{j-1})}+
\frac{1}{2}\|g(s)\|^2_{I_j\cap(M_j\cup M_{j-1})}-\sum_{t\in\tilde{J}}V(t,g(s)(t))\\
=&\frac{1}{2}\|\tilde{\chi}_{\tilde{J}}\bar{G}(\be(s))\|^2_{I_j\setminus (M_j\cup M_{j-1})}+
\frac{1}{2}\|\tilde{\chi}_{\tilde{J}}\bar{G}(\be(s))\|^2_{I_j\cap(M_j\cup M_{j-1})}-\sum_{t\in\tilde{J}}V(t,\tilde{\chi}_{\tilde{J}}(t)\bar{G}(\be(s))(t))\\
=&\frac{1}{2}\|\bar{G}(\be(s))\|^2_{I_j\setminus (M_j\cup M_{j-1})}+
\frac{1}{2}\|\tilde{\chi}_{\tilde{J}}\bar{G}(\be(s))\|^2_{I_j\cap(M_j\cup M_{j-1})}\\
&-\sum_{t\in I_j\setminus (M_j\cup M_{j-1})}V(t,\bar{G}(\be(s))(t))-\sum_{t\in I_j\cap(M_j\cup M_{j-1})}V(t,\tilde{\chi}_{\tilde{J}}(t)\bar{G}(\be(s))(t)),
\end{split}
\eeq
and
\beq
f_j(\bar{G}(\be(s)))=\frac{1}{2}\| \bar{G}(\be(s))\|^2_{I_j}-\sum_{t\in I_j}V(t,\bar{G}(\be(s))(t)).
\eeq
So,
\beq\label{Ch2f43}
\begin{split}
&f_j(g(s))=f_j(\bar{G}(\be(s)))+\frac{1}{2}\|\tilde{\chi}_{\tilde{J}}\bar{G}(\be(s))\|^2_{I_j\cap(M_j\cup M_{j-1})}
-\frac{1}{2}\| \bar{G}(\be(s))\|^2_{I_j\cap(M_j\cup M_{j-1})}\\+&\sum_{t\in I_j\cap(M_j\cup M_{j-1})}V(t,\bar{G}(\be(s))(t))-\sum_{t\in I_j\cap(M_j\cup M_{j-1})}V(t,\tilde{\chi}_{\tilde{J}}(t)\bar{G}(\be(s))(t)).
\end{split}
\eeq

By \eqref{Ch2f51}, (g2) of Lemma \ref{Ch2pathg}, one has
\beq\label{Ch2f44}
\begin{split}
&\bigg|\frac{1}{2}\|\tilde{\chi}_{\tilde{J}}\bar{G}(\be(s))\|^2_{I_j\cap(M_j\cup M_{j-1})}
-\frac{1}{2}\| \bar{G}(\be(s))\|^2_{I_j\cap(M_j\cup M_{j-1})}\bigg|\\
\leq&2(\| \bar{G}(\be(s))\|^2_{M_j}+\| \bar{G}(\be(s))\|^2_{M_{j-1}})\leq16\ep.
\end{split}
\eeq
It follows from \eqref{Ch2f4}, \eqref{Ch2epb}, and (g2) of Lemma \ref{Ch2pathg} that
\beq\label{Ch2f45}
\bigg|\sum_{t\in I_j\cap(M_j\cup M_{j-1})}V(t,\bar{G}(\be(s))(t))\bigg|\leq\| \bar{G}(\be(s))\|^2_{I_j\cap(M_j\cup M_{j-1})}\leq 8\ep.
\eeq
Similarly, one has
\beq\label{Ch2f60}
\bigg|\sum_{t\in I_j\cap(M_j\cup M_{j-1})}V(t,\tilde{\chi}_{\tilde{J}}(t)\bar{G}(\be(s))(t))\bigg|\leq16\ep.
\eeq

By \eqref{Ch2f43}--\eqref{Ch2f60} and (g3) of Lemma \ref{Ch2pathg}, one has
\beq\label{Ch2f56}
f(g(s))\leq \rho_{L}+41\ep<\bar{\kappa},
\eeq
where \eqref{Ch2epb} is used.

By the translation of $-p_j$ along $g$, we know that $g$ is a path joining $\ga(0)$ and $\ga(1)$. This, together with \eqref{Ch2f57} and \eqref{Ch2f56}, implies that $\ga(0)$ and $\ga(1)$ are $\bar{\kappa}$-connectible. This contradicts (e2). This completes the proof.

\section{The proof of Lemma \ref{Ch2vector} and the construction of a vector field}

In this section, we give the proof of Lemma \ref{Ch2vector} by constructing a proper vector field on $l^2$.

Choose a constant $N_0$ such that
\beq\label{Ch2g2}
N_0\geq\max\bigg\{8,\ \frac{8}{L_2},\ \bigg[\frac{5}{L_2}\bigg]+1\bigg\},
\eeq
where $L_2$ is specified in \eqref{Ch2f20}.

Given any $v\in l^2$ and $r>0$, there is a positive integer $\bar{N}=\bar{N}(v,r)$ such that for any integer $N>\bar{N}$, any $k\in\NN$, and $(p_1,...,p_k)\in \mathcal{P}(k,N)$, one has that for any $u\in\mathcal{B}^{P}_{r}(v)$ and any $i$, $1\leq i\leq k$, there is $j=j(i)$, $1\leq j\leq N$, such that
$$\|u\|^{2}_{jN\leq|t-p_i|\leq(j+1)N}\leq\frac{4r^2}{N},$$
since $\|u\|_{jN\leq|t-p_i|\leq(j+1)N}\leq\|u-v(\cdot-p_i)\|_{jN\leq|t-p_i|\leq(j+1)N}+\|v(\cdot-p_i)\|_{jN\leq|t-p_i|\leq(j+1)N}$, $v$ is fixed, $N$ is sufficiently large, and \eqref{Ch2e10}.

It is possible that $j=j(i)$ is not unique, let $j_{u,i}$ be the smallest index such that the above inequality holds.

For any $\ep\in(0,\min\{r,\|v\|_*^2\})$, there exists $N_{\ep}\geq \max\{N_0,\ \bar{N}\}$ such that
\begin{equation}\label{Ch2f3}
\max\bigg\{\|v\|^2_{|t|\geq N_{\ep}},\frac{4r^2}{N_{\ep}}\bigg\}<\frac{\ep}{4}.
\end{equation}
So, for any $N>N_{\ep}$, any positive integer $k$, $P=(p_1,...,p_k)\in \mathcal{P}(k,N)$, one has that for any $u\in\mathcal{B}^{P}_{r}(v)$ and any $1\leq i\leq k$,
\beq\label{Ch2f15}
\|u\|^{2}_{j_{u,i}N\leq|t-p_i|\leq(j_{u,i}+1)N}<\frac{\ep}{2}.
\eeq

Given any $N>N_{\ep}$ and $u\in\mathcal{B}_r^{P}(v)$, set
$$A_{u,0}:=(p_0,p_1-(j_{u,1}+1)N]\cap\ZZ;$$
$$A_{u,i}:=(p_{i}+(j_{u,i}+1)N,p_{i+1}-(j_{u,i+1}+1)N]\cap\ZZ,\ 1\leq i\leq k-1;$$
$$A_{u,k}:=(p_k+(j_{u,k}+1)N,p_{k+1})\cap\ZZ;\ A_u:=\cup^{k}_{l=0}A_{u,l};$$
$$B_{u,l}:=\{t\in\ZZ:\ d(t,A_{u,l})\leq N\},\ 0\leq l\leq k;\ B_u:=\cup^{k}_{l=0}B_{u,l};$$
$$F_{u,i}:=I_i\cap(B_u\setminus A_u),\ 1\leq i\leq k.$$
From the construction above, it follows that $M_l\subset A_{u,l}$, $0\leq l\leq k$; the number of integers contained in $B_{u,l}\setminus A_{u,l}$ is $2N$, $1\leq l\leq k-1$. It follows from \eqref{Ch2f15} that
\begin{equation} \label{Ch2f10}
\|u\|^2_{F_{u,i}}\leq\frac{\ep}{2},\ 1\leq i\leq k;
\end{equation}
and
\begin{equation}\label{Ch2f6}
\|u\|^2_{B_{u,l}\setminus A_{u,l}}\leq\ep,\ 0\leq l\leq k.
\end{equation}

For any set $B\subset\mathbb{Z}$, set $R_B:=\sup_{t\in B}t$ and $L_B:=\inf_{t\in B}t$.
For the set $A_{u,l}$, $0\leq l\leq k$, we define the following step functions
\beq \label{Ch2h1}
 \chi_{u,0}(t):=\left\{
    \begin{array}{ll}
      1, & \hbox{if}\ t\in A_{u,0}\cup\{R_{A_{u,0}}+1\}, \\
      \frac{N_0-i}{N_0}, & \hbox{if}\ t=R_{A_{u,0}}+1+i,\ 1\leq i\leq N_0,  \\
      0, & \hbox{otherwise;}
    \end{array}
  \right.
\eeq
\beq \label{Ch2h2}
\chi_{u,l}(t):=\left\{
    \begin{array}{ll}
      1, & \hbox{if}\ t\in A_{u,l}\cup\{L_{A_{u,l}}-1,R_{A_{u,l}}+1\}, \\
       \frac{N_0-i}{N_0}, & \hbox{if}\ t=R_{A_{u,l}}+1+i,\ 1\leq i\leq N_0,  \\
       \frac{N_0-i}{N_0}, & \hbox{if}\ t=L_{A_{u,l}}-1-i,\ 1\leq i\leq N_0,  \\
      0, & \hbox{otherwise,}
    \end{array}
  \right.\ \mbox{when}\ 1\leq l\leq k-1;
\eeq
\beq \label{Ch2h3}
\chi_{u,k}(t):=\left\{
    \begin{array}{ll}
      1, & \hbox{if}\ t\in A_{u,k}\cup\{L_{A_{u,k}}-1\}, \\
      \frac{N_0-i}{N_0}, & \hbox{if}\ t=L_{A_{u,k}}-1-i,\ 1\leq i\leq N_0,  \\
      0, & \hbox{otherwise.}
    \end{array}
  \right.
\eeq

For $1\leq i\leq k$, set
\beq\label{Ch2f24}
\bar{\chi}_{u,i}(t):=\left\{
  \begin{array}{ll}
    0, & \hbox{if}\ t\not\in I_i, \\
    1-\chi_{u,i-1}-\chi_{u,i}, & \hbox{if}\ t\in I_i.
  \end{array}
\right.
\eeq
By direct computation, one has
\beq\label{Ch2g1}
\begin{split}
&\lan u,\chi_{u,l}u\ran_*=\sum_{t\in\ZZ}\lan\De u(t-1), \De(\chi_{u,l}(t-1)u(t-1))\ran+\sum_{t\in\ZZ}\lan u(t),L(t)\chi_{u,l}(t)u(t) \ran\\
=&\sum_{t\in\ZZ}\lan\De u(t-1), \chi_{u,l}(t)u(t)-\chi_{u,l}(t-1)u(t-1)\ran+\sum_{t\in\ZZ}\lan u(t),L(t)\chi_{u,l}(t)u(t) \ran\\
=&\sum_{t\in\ZZ}\lan\De u(t-1),\chi_{u,l}(t)u(t)-\chi_{u,l}(t-1)u(t)+\chi_{u,l}(t-1)u(t)-\chi_{u,l}(t-1)u(t-1) \ran\\
+&\sum_{t\in\ZZ}\lan u(t),L(t)\chi_{u,l}(t)u(t) \ran=\sum_{t\in\ZZ}\lan\De u(t-1), \chi_{u,l}(t-1)(\De u(t-1))\ran\\
+&\sum_{t\in\ZZ}\lan\De u(t-1),(\De(\chi_{u,l}(t-1))u(t)\ran+\sum_{t\in\ZZ}\lan u(t),L(t)\chi_{u,l}(t)u(t) \ran\\
=&\lan u, u\ran_{A_{u,l}}+\sum_{t\in B_{u,l}\setminus A_{u,l}}\lan \De u(t-1), \chi_{u,l}(t-1)(\De u(t-1))\ran\\
+&\sum_{t\in B_{u,l}\setminus A_{u,l}}\lan u(t),L(t)\chi_{u,l}(t)u(t) \ran+\sum_{t\in\ZZ}\lan\De u(t-1),(\De(\chi_{u,l}(t-1))u(t) \ran.
\end{split}
\eeq
By \eqref{Ch2g2} and \eqref{Ch2f6}, one has
\beq\label{Ch2g3}
\begin{split}
&\sum_{t\in\ZZ}\lan\De u(t-1),(\De(\chi_{u,l}(t-1))u(t) \ran\leq\frac{1}{N_0}\sum_{t\in B_{u,l}\setminus A_{u,l}}\lan \De u(t-1), u(t)\ran\\
\leq&\frac{1}{N_0}\sum_{t\in B_{u,l}\setminus A_{u,l}}|\De u(t-1)||u(t)|
\leq\frac{1}{2N_0}\sum_{t\in B_{u,l}\setminus A_{u,l}}(|\De u(t-1)|^2+|u(t)|^2)\\
\leq&\frac{1}{2N_0}\sum_{t\in B_{u,l}\setminus A_{u,l}}(|\De u(t-1)|^2+\frac{1}{L_2}\lan u(t),L(t)u(t)\ran)\\
\leq&\frac{\max\{1,1/L_2\}}{2N_0}\sum_{t\in B_{u,l}\setminus A_{u,l}}(|\De u(t-1)|^2+\lan u(t),L(t)u(t)\ran)
\leq\frac{\ep}{4}
\end{split}
\eeq
It follows from \eqref{Ch2g1} and \eqref{Ch2g3} that
\beq\label{Ch2g4}
\lan u,\chi_{u,l}u\ran_*\geq\lan u, u\ran_{A_{u,l}}-\frac{\ep}{4},\ 0\leq l\leq k.
\eeq

For any $0\leq l\leq k$, set
\beq\label{Ch2f35}
h_l(u):=\left\{
    \begin{array}{ll}
      1, & \hbox{if}\ \|u\|^2_{A_{u,l}}\geq\ep, \\
      \frac{1}{k+1}, & \hbox{otherwise},
    \end{array}
  \right.
\eeq
and
\beq\label{Ch2f36}
W_u:=\sum^{k}_{l=0}h_l(u)\chi_{u,l}u.
\eeq

\begin{lemma}\label{Ch2diff2}
Let $r\in(0,r_0/4)$ and $\ep\in(0,r^2)$, for any $u\in\mathcal{B}_r^{P}(v)$, one has
\begin{equation*}
 D_*f(u)W_u\geq\frac{1}{2}\sum^k_{l=0}h_l(u)(\|u\|^2_{A_{u,l}}-\ep),
\end{equation*}
\begin{equation*}
 D_*f_i(u)W_u\geq\frac{1}{2}\sum^k_{l=0}h_l(u)(\|u\|^2_{I_i\cap A_{u,l}}-\ep),\ 1\leq i\leq k,
\end{equation*}
\end{lemma}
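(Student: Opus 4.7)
The plan is to expand $D_*f(u)W_u$ by linearity using the definition of $W_u$ in \eqref{Ch2f36} and the formula \eqref{Ch2deriva}, obtaining
$$D_*f(u)W_u = \sum_{l=0}^{k} h_l(u)\Bigl[\langle u, \chi_{u,l}u\rangle_* - \sum_{t\in B_{u,l}}\langle V'_x(t,u(t)), \chi_{u,l}(t)u(t)\rangle\Bigr],$$
where the potential sum is restricted to $B_{u,l}$ since $\chi_{u,l}$ is supported there. The quadratic part is bounded below immediately by \eqref{Ch2g4}, giving $\langle u, \chi_{u,l}u\rangle_* \geq \|u\|^2_{A_{u,l}} - \ep/4$. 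The potential part will be controlled by \eqref{Ch2f4}, which requires checking that $\|u\|_{B_{u,l}} \leq r_0$.

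To verify this prerequisite, I would exploit the geometry: by the spacing condition in \eqref{Ch2f49} and the definition of the $j_{u,i}$'s, one has $A_{u,l} \subset I_l \cup I_{l+1}$ (with the natural conventions at $l=0,k$), and every point of $A_{u,l}$ lies at distance greater than $N > N_{\ep}$ from the centers $p_l, p_{l+1}$. Together with \eqref{Ch2f3} this yields $\|v(\cdot-p_j)\|_{A_{u,l}} \leq \sqrt{\ep}/2$ for $j\in\{l,l+1\}$, and combining with $\|u-v(\cdot-p_j)\|_{I_j} < r$ from \eqref{Ch2e10} gives $\|u\|^2_{A_{u,l}} \leq 2(r+\sqrt{\ep}/2)^2 \leq 9r^2/2$. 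Adding the bound \eqref{Ch2f6}, which gives $\|u\|^2_{B_{u,l}\setminus A_{u,l}} \leq \ep < r^2$, and using additivity of $\|\cdot\|_F^2$, I obtain $\|u\|^2_{B_{u,l}} \leq 9r^2/2 + r^2 < 16 r^2 \leq r_0^2$. A short calculation of the same type as \eqref{Ch2f58}, using $|\De\chi_{u,l}(t-1)|\leq 1/N_0$ and \eqref{Ch2g2}, then yields $\|\chi_{u,l}u\|_{B_{u,l}} \leq \sqrt{2}\,\|u\|_{B_{u,l}}$. Feeding these into \eqref{Ch2f4} bounds the potential sum by $\tfrac{\sqrt{2}}{8}(\|u\|^2_{A_{u,l}}+\ep)$, and subtracting from the quadratic bound gives
$$D_*f(u)(\chi_{u,l}u) \geq \bigl(1-\tfrac{\sqrt{2}}{8}\bigr)\|u\|^2_{A_{u,l}} - \bigl(\tfrac14+\tfrac{\sqrt{2}}{8}\bigr)\ep \geq \tfrac{1}{2}\bigl(\|u\|^2_{A_{u,l}}-\ep\bigr).$$
Multiplying by $h_l(u)\geq 0$ and summing over $l$ proves the first inequality.

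For the $f_i$ version I would rerun the identical decomposition with all outer sums restricted to $t\in I_i$. Because $B_{u,l}$ lies strictly inside $I_l\cup I_{l+1}$ (again by the spacing condition in \eqref{Ch2f49} together with $j_{u,l}\leq N$), the summand corresponding to $l$ contributes only when $i\in\{l,l+1\}$, and for such $l$ the discrete derivative $\De u(t-1)$ at the boundary of $I_i$ straddles the midpoint $\tfrac{p_l+p_{l+1}}{2}$ where $\chi_{u,l}\equiv 1$, so the product-rule manipulation of \eqref{Ch2g1} reproduces verbatim on $I_i$ with $A_{u,l}$ replaced by $A_{u,l}\cap I_i$. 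The estimates \eqref{Ch2g3}--\eqref{Ch2g4} and the potential bound therefore carry over, yielding $D_*f_i(u)(\chi_{u,l}u) \geq \tfrac12(\|u\|^2_{I_i\cap A_{u,l}}-\ep)$ for every $l$, and summing gives the second inequality. The main obstacle will be the careful bookkeeping for the restricted version: one must verify that the boundary difference terms straddling $\partial I_i$ are either identically zero (because $\chi_{u,l}$ equals $1$ on both sides of the boundary) or absorbed into the negligible $\ep/4$ error, so that the restricted analogue of \eqref{Ch2g4} holds without any new correction.
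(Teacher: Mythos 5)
Your proposal is correct and follows essentially the same route as the paper: expand by linearity of $W_u$, lower-bound the quadratic part via \eqref{Ch2g4}, and control the potential via \eqref{Ch2f4} after verifying the relevant $\|\cdot\|_F$-norms stay below $r_0$. The only cosmetic difference is that the paper applies \eqref{Ch2f4} separately on $F=A_{u,l}$ (with $w=u$) and on $F=B_{u,l}\setminus A_{u,l}$ (with $w=\chi_{u,l}u$), whereas you apply it once on $F=B_{u,l}$ and absorb the resulting $\sqrt{2}$ factor from $\|\chi_{u,l}u\|_{B_{u,l}}\leq\sqrt{2}\|u\|_{B_{u,l}}$ into the constants; both yield the stated $1/2$ bound, and your treatment of the $f_i$ boundary terms near the midpoints $(p_l+p_{l+1})/2$ (where $\chi_{u,l}\equiv 1$ because $(j+1)N<N^2+2N\leq(p_{l+1}-p_l)/2$) is a more explicit justification of the ``By similar discussions'' step the paper leaves implicit.
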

where $r_0$ is specified in \eqref{Ch2f4}.

\begin{proof}
Since $\ep\in(0,r^2)$, $u\in\mathcal{B}_r^{P}(v)$, and \eqref{Ch2f3}, one has $\|u\|_{A_{u,l}\cap I_i}\leq 2r$, $1\leq i\leq k$. This implies that $\|u\|_{A_{u,l}}\leq 4r<r_0$. By \eqref{Ch2f6} and $\ep\in(0,r^2)$, one has that $\|u\|_{B_{u,l}\setminus A_{u,l}}\leq\ep^{1/2}<r<r_0$. This, together with \eqref{Ch2deriva}, \eqref{Ch2f4}, and \eqref{Ch2g4}, yields that
\begin{equation*}
\begin{split}
& D_*f(u)W_u=\lan u,W_u\ran_*-\sum_{t\in\ZZ}\lan V'_x(t,u(t)),W_u(t)\ran\\
=&\bigg\lan u,\sum^k_{l=0}h_l(u)\chi_{u,l}u\bigg\ran_*-\sum_{t\in\ZZ}\bigg\lan V'_x(t,u(t)),\sum^k_{l=0}h_l(u)\chi_{u,l}(t)u(t)\bigg\ran\\
=&\sum^k_{l=0}h_l(u)\bigg(\lan u,\chi_{u,l}u\ran_*-\sum_{t\in\ZZ}\lan V'_x(t,u(t)),\chi_{u,l}(t)u(t)\ran\bigg)\\
\geq&\sum^k_{l=0}h_l(u)\bigg(\|u\|^2_{A_{u,l}}-\frac{\ep}{4}-\sum_{t\in A_{u,l}}\lan V'_x(t,u(t)),u(t)\ran-\sum_{t\in B_{u,l}\setminus A_{u,l}}\lan V'_x(t,u(t)),\chi_{u,l}(t)u(t)\ran\bigg)\\
\geq&\sum^k_{l=0}h_l(u)\bigg(\|u\|^2_{A_{u,l}}-\frac{\ep}{4}-\frac{1}{8}\|u\|^2_{A_{u,l}}-\frac{1}{8}\|u\|^2_{B_{u,l}\setminus A_{u,l}} \bigg)\\
\geq&\sum^k_{l=0}h_l(u)\bigg(\frac{7}{8}\|u\|^2_{A_{u,l}}-\frac{\ep}{4}-\frac{\ep}{8}\bigg)\geq\frac{1}{2}\sum^k_{l=0}h_l(u)(\|u\|^2_{A_{u,l}}-\ep).
\end{split}
\end{equation*}

By similar discussions and \eqref{Ch2diff3}, one has
\beqq
 D_*f_i(u)W_u\geq\sum^k_{l=0}h_l(u)\bigg(\frac{7}{8}\|u\|^2_{I_i\cap A_{u,l}}-\frac{3\ep}{8}\bigg)\geq\frac{1}{2}\sum^k_{l=0}h_l(u)(\|u\|^2_{I_i\cap A_{u,l}}-\ep).
\eeqq

\end{proof}

It follows from Lemma \ref{Ch2diff2} that
\beq\label{Ch2f28}
 D_*f(u)W_u\geq\frac{1}{2}\sum^k_{l=0}h_l(u)(\|u\|^2_{A_{u,l}}-\ep)\geq\frac{1}{2}\sum_{\{l:\ \|u\|^2_{A_{u,l}}<\ep\}}h_l(u)(\|u\|^2_{A_{u,l}}-\ep)\geq-\frac{\ep}{2}.
\eeq
Similarly, one has
\beq\label{Ch2f29}
 D_*f_i(u)W_u\geq-\frac{\ep}{2}.
\eeq

\begin{proposition} \label{globvect}
Fix any $b<\kappa^*$ with $\mathcal{C}(b)\neq\emptyset$. Fix any $v\in\mathcal{C}(b)$. For any $r\in(0,\min\{\frac{r_0}{8}.\frac{\sqrt{3}\|v\|_{*}}{4}\})\setminus D^{\kappa^*}$. Let $r_{s},r_{m},r_{b}$ be real numbers with $0<r-3d_r<r_{s}<r_{m}<r_{b}<r+3d_r$. Given
$b_{L},b_{R}$, and $\de$ with $\de\in(0,r^2_0/4)$, $(b_{L}-\de,b_{L}+2\de)\subset(0,b)\setminus \Phi^{\kappa^*}$, and $(b_{R}-\de,b_{R}+2\de)\subset(b,\kappa^*)\setminus\Phi^{\kappa^*}$, there exist $\mu>0$ and $\ep_1>0$ ($\mu$ is dependent on $r,d_r,r_s$,and $r_b$; $\ep_1$ is dependent on $r,d_r,r_s,r_b,b_{R},b_{L},\de$,) such that for any $\ep\in(0,\ep_1)$, there is a positive integer $N^*(\ep)$ such that for any positive integer $N\geq N^*$, positive integer $k$, and $P=(p_1,...,p_k)\in \mathcal{P}(k,N)$, there exists a locally Lipschitz continuous function $W:l^2\to l^2$ satisfying the following conditions
\begin{itemize}
\item [(I)] $ D_*f(u)W(u)\geq0$, $\|W(u)\|_{I_i}\leq 2$, $1\leq i\leq k$, for any $u\in l^2$; $W(u)=0$, if $u\in l^2\setminus \mathcal{B}_{r_{b}}^{P}(v)$;
\item [(II)] $D_*f_i(u)W(u)\geq\mu$, if $r_{s}\leq\|u-v(\cdot-p_i)\|_{I_i}\leq r_{m}$, $u\in\mathcal{B}_{r_{m},b_{R}+\de}^{P}(v)$;
\item [(III)] $ D_*f_i(u)W(u)\geq0$, for any $u\in(f^{b_{R}+\de}_i\setminus f^{b_{R}}_i)\cup(f^{b_{L}+\de}_i\setminus f^{b_{L}}_i)$, any $i$, $1\leq i\leq k$;
\item [(IV)] $\lan u,W(u)\ran_{M_i}\geq0$, if $u\in l^2\setminus\mathcal{M}_{4\ep}$;
\item [(V)]  if $\mathcal{C}\cap\mathcal{B}_{r_{s}}^{P}(v)=\emptyset$, then there is $\mu_k>0$ so that
               $ D_*f(u)W(u)\geq\mu_k$, for any $u\in\mathcal{B}_{r_{s}}^{P}(v)$.
\end{itemize}
\end{proposition}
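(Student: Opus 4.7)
The vector field $W$ will be assembled from two ingredients: the ``tail'' field $W_u$ defined in \eqref{Ch2f36}, which by Lemma \ref{Ch2diff2} satisfies $D_*f(u)W_u\geq -\epsilon/2$ globally and, since each $M_l$ lies in the interior of $A_{u,l}$ where $\chi_{u,l}\equiv 1$, also satisfies $\lan u,W_u\ran_{M_l}=h_l(u)\|u\|_{M_l}^2\geq 0$ --- so (IV) is automatic for the tail part; and a family of localized pseudo-gradient fields $V_i$ that will be responsible for the sharp lower bounds in (II) and (V). The construction hinges on fitting these two ingredients together using smooth $u$-dependent cutoffs.

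For the localized fields, fix $i$ and observe that $r\in(0,r_0/8)\setminus D^{\kappa^*}$, so Lemma \ref{Ch2diffposi} and Remark \ref{Ch2k40} provide $\mu_r>0$ with $\|D_*f(u)\|_*\geq\mu_r$ whenever $r-3d_r<\|u-v(\cdot-p_i)\|_{I_i}<r+3d_r$ and $f(u)\leq\kappa^*$. Restricting the standard pseudo-gradient construction to test directions supported in $I_i\setminus(M_i\cup M_{i-1})$ (possible once $N^*$ is large enough that this set captures the relevant mass of $u-v(\cdot-p_i)$), we obtain a locally Lipschitz $V_i$ with $\|V_i\|_{I_i}\leq 1$, supported in $I_i\setminus(M_i\cup M_{i-1})$, and satisfying $D_*f_i(u)V_i(u)\geq \mu$ on the annular, energy-bounded region of (II), with $\mu>0$ depending only on $r,d_r,r_s,r_m,r_b$. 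Crucially, $V_i$ vanishes on every $M_l$. Then define
\begin{equation*}
W(u):=\rho(u)\Bigl[W_u+\sum_{i=1}^{k}\phi_i(u)\psi_i(u)V_i(u)\Bigr],
\end{equation*}
rescaled so that $\|W(u)\|_{I_i}\leq 2$, where $\rho$ is a Lipschitz cutoff equal to $1$ on $\mathcal{B}^P_{r_m}(v)$ and $0$ outside $\mathcal{B}^P_{r_b}(v)$, $\phi_i$ is a Lipschitz function of $\|u-v(\cdot-p_i)\|_{I_i}$ equal to $1$ on $[r_s,r_m]$ and $0$ outside $(r-3d_r,r+3d_r)$, and $\psi_i$ is an energy cutoff equal to $1$ on $f_i\leq b_R$ vanishing on $f_i\geq b_R+\delta$ (and the analogous cutoff near $b_L$).

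Properties (I)--(V) then follow by inspection. (I) uses the rescaling together with Lemma \ref{Ch2diff2}, after $\epsilon_1$ is picked so small that the $-\epsilon/2$ deficit of $W_u$ is negligible; (II) holds because $\phi_i\equiv\psi_i\equiv 1$ on the prescribed annulus, so the $V_i$-term dominates the tail deficit once $\epsilon_1<\mu$; (III) is produced by $\psi_i$, which switches off the pseudo-gradient contribution in the thin energy strips above $b_R$ and $b_L$, leaving only the non-negative tail contribution (here we use Lemma \ref{Ch2diffposi1} to keep these strips inside $\Phi^{\kappa^*}$-free sets so the cutoff is well-defined); (IV) is preserved because every $V_i$ has support disjoint from every $M_l$; and (V) follows from Lemma \ref{Ch2diffposi} applied at radius $r_s$: if $\mathcal{C}\cap\mathcal{B}^P_{r_s}(v)=\emptyset$, then $\|D_*f\|_*\geq\mu'>0$ uniformly on $\mathcal{B}^P_{r_s}(v)\cap f^{\kappa^*}$, and the estimate $D_*f(u)W(u)\geq \mu_k$ is deduced by summing the localized pseudo-gradient contributions over $i$.

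The main technical obstacle is making (II), (III) and (IV) simultaneously compatible: each $V_i$ must increase $f_i$ in the bump region without disturbing the $M_l$-mass and without creating a downward leak across the level sets $f_i=b_R,\ f_i=b_L$. This is handled by combining \emph{spatial} localization (the support of $V_i$ in $I_i\setminus(M_i\cup M_{i-1})$, together with the cutoffs $\chi_{u,l}$ built into $W_u$) with \emph{energy} localization (the cutoffs $\psi_i$ supported in the $\Phi^{\kappa^*}$-free strips), and by choosing $\epsilon_1$ small enough that all $\epsilon$-order errors coming from Lemma \ref{Ch2diff2} are absorbed by the first-order contributions of the $V_i$'s.
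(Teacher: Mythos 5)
Your overall architecture (tail field $W_u$ for the $M_l$-control and the $-\ep/2$ slack, plus bump-by-bump pseudo-gradient directions, glued by cutoffs) matches the paper's, and your observation that $\lan u,W_u\ran_{M_l}=h_l(u)\|u\|^2_{M_l}\geq0$ because $\chi_{u,l}\equiv1$ on $M_l$ is correct. But the proposal contains a genuine gap at the heart of the construction.

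You assert that Lemma \ref{Ch2diffposi} and Remark \ref{Ch2k40} give $\|D_*f(u)\|_*\geq\mu_r$ ``whenever $r-3d_r<\|u-v(\cdot-p_i)\|_{I_i}<r+3d_r$ and $f(u)\leq\kappa^*$,'' and that a spatially supported pseudo-gradient $V_i$ then satisfies $D_*f_i(u)V_i(u)\geq\mu$. This is not what Lemma \ref{Ch2diffposi} says: its hypothesis concerns the \emph{full} $\|\cdot\|_*$-distance of $u$ to the critical set $\mathcal{C}^b$, not the $I_i$-seminorm distance of $u$ to the single translate $v(\cdot-p_i)$. For a genuine multibump $u\in\mathcal{B}^P_{r_b}(v)$ the two are unrelated: such a $u$ can be far from \emph{every} critical point in the $*$-norm while having $f(u)$ large, so the lemma simply does not apply to $u$, and ``restricting the test directions'' to $I_i\setminus(M_i\cup M_{i-1})$ does not create the missing lower bound. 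The paper resolves this by a real localization argument that you skip: it introduces the $u$-dependent cutoff $\bar{\chi}_{u,i}$ (whose support depends on $u$ through $j_{u,i}$ and $A_u$, not a fixed interval), proves via the tail estimates \eqref{Ch2f10}, \eqref{Ch2f22}, \eqref{Ch2f21}, \eqref{Ch2f25}, \eqref{Ch2f26} that the \emph{modified point} $\bar{\chi}_{u,i}u$ lies in $\mathcal{A}_{r-3d_r,r+3d_r}(v(\cdot-p_i))\cap f^{\kappa^*}$ so that $\|D_*f(\bar{\chi}_{u,i}u)\|_*\geq\mu_r$, and then transports that bound back to $u$ via the comparison $|D_*f_i(\bar{\chi}_{u,i}u)Q_{u,i}-D_*f_i(u)(\bar{\chi}_{u,i}Q_{u,i})|\leq\mu_r/4$ (equation \eqref{Ch2g8}). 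Without this two-step ``localize $u$, apply the lemma to the localized point, estimate the commutator'' mechanism, property (II) is not justified, and the same mechanism (with $\nu$ from Lemma \ref{Ch2diffposi1}) is needed, not merely a cutoff $\psi_i$, to secure property (III) in the energy strips.

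The derivation of (V) in the proposal is also off. You appeal to Lemma \ref{Ch2diffposi} at radius $r_s$, but the ball $\mathcal{B}^P_{r_s}(v)$ is defined by $k$ separate $I_i$-seminorm conditions and is not an annulus around the critical set. The paper instead splits Case (4) into $\max_l\|u\|_{M_l}^2\geq4\ep$ (where $W_u$ already gives $D_*f(u)W_u\geq\ep$) and $\max_l\|u\|_{M_l}^2<4\ep$, and for the latter derives the uniform bound $\|D_*f(u)\|_*\geq\bar\mu_k$ by a contradiction argument using Lemma \ref{Ch2b3}: a putative PS sequence in $\mathcal{B}^P_{r_s}(v)$ would have uniformly small tails, hence converge, and the limit would be a critical point in $\mathcal{B}^P_{r_s}(v)$, contradicting the hypothesis. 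This is why the constant $\mu_k$ in (V) must depend on $k$ (and on $P$); it is not a uniform gradient estimate of the kind Lemma \ref{Ch2diffposi} supplies.
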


\begin{proof}
First, we introduce several constants:
$$\bar{r}_{s}=\frac{1}{2}(r_{s}+r-3d_r),\ \bar{r}_{b}=\frac{1}{2}(r_{b}+r+3d_r).$$
For the given $r$, it follows from Lemma \ref{Ch2diffposi} and Remark \ref{Ch2k40} that there are $\mu_r>0$ and $d_r\in(0,r/3)$ such that for any $u\in(\mathcal{A}_{r-3d_r,r+3d_r}(\mathcal{C}^b)\cup\mathcal{A'}_{r-3d_r,r+3d_r}(\mathcal{C}^b))\cap f^b$,\ $\|D_*f(u)\|_{*}\geq\mu_r$.
Set
\beq\label{Ch2f41}
\nu:=\inf\{\|D_*f(u)\|_{*}:\ u\in(f^{b_{R}+2\de}\setminus f^{b_{R}-\de})\cup(f^{b_{L}+2\de}\setminus f^{b_{L}-\de})\},
\eeq
by Lemma \ref{Ch2diffposi1}, one has that $\nu>0$.

Set
\begin{equation}\label{Ch2f1}
\ep_1:=\bigg(\min\bigg\{\frac{r_{s}-r+3d_r}{12},\ \frac{r+3d_r-r_{b}}{12},\ \frac{\mu_r}{16},\ \frac{\nu}{16},\ \frac{\de^{1/2}}{6},\ r,\ \|v\|_*\bigg\}\bigg)^2.
\end{equation}

Choose
\beq\label{Ch2f27}
\ep\in(0,\ep_1).
\eeq

For the given $v$, $r$, and $\ep$, by \eqref{Ch2g2} and \eqref{Ch2f3}, set $N^*:=2N_{\ep}$. In the following discussions, take $N>N^*$, and $(p_1,...,p_k)\in \mathcal{P}(k,N)$. In the discussions of \eqref{Ch2g2} and \eqref{Ch2f3}, one has that if $N>N^*$, then the choice of $N$ does not affect the conclusions in \eqref{Ch2g2} and \eqref{Ch2f3}.

Since $r_b<r+3d_r<2r<r_0/4$, for the given $r$ and $\ep$, it follows from Lemma \ref{Ch2diff2} that for any $u\in\mathcal{B}^P_{r_b}(v)$,\ there is a vector field $W_u$ satisfying the inequalities in Lemma \ref{Ch2diff2}.

Now, fix $k\in\NN$. For the given $r$ and $\ep$, let $N>N_{\ep}$ and $(p_1,...,p_k)\in \mathcal{P}(k,N)$.

Since $r_{b}<r+3d_r<2r<r_0/4$ and \eqref{Ch2f27}, it follows from Lemma \ref{Ch2diff2} that there is a vector field $W_u$ satisfying the properties of Lemma \ref{Ch2diff2} for any $u\in\mathcal{B}_{r_{b}}^{P}(v)$ with this given $\ep$.

{\bf Case (1)} We consider $u\in\mathcal{B}_{r_{b},b_{R}+\frac{3\de}{2}}^{P}(v)\setminus\mathcal{B}_{r_{s}}^{P}(v)$.
Set
$$\mathcal{I}_1(u):=\{i:\ 1\leq i\leq k,\ \|u-v(\cdot-p_i)\|_{I_i}\geq r_{s}\}.$$
So, $\mathcal{I}_1(u)\neq\emptyset$.

Set
\begin{equation}\label{Ch2f2}
\z_1:=\min\bigg\{\frac{r_{s}-\bar{r}_{s}}{2},\ \frac{\bar{r}_{b}-r_{b}}{2}\bigg\}.
\end{equation}
Fix any $i\in\mathcal{I}_1(u)$,
$$\mbox{either}\ \mbox{(i)}\ \|u\|_{I_i\cap A_u}\geq\z_1\ \mbox{or}\ \mbox{(ii)}\ \|u\|_{I_i\cap A_u}<\z_1.$$
It follows from Lemma \ref{Ch2diff2}, \eqref{Ch2f1}, \eqref{Ch2f27}, and \eqref{Ch2f2} that $\ep^{1/2}_1\leq\z_1/3,$ and
\begin{equation*}
\begin{split}
& D_*f(u) W_u\geq\frac{1}{2}(\|u\|^2_{A_{u,i-1}}+\|u\|^2_{A_{u,i}}-2\ep)-\sum_{\{l:\ \|u\|^2_{A_{u,l}}<\ep\}}h_l(u)\frac{\ep}{2}\\
\geq&\frac{1}{2}(\|u\|^2_{I_i\cap A_u}-2\ep)-\sum_{\{l:\ \|u\|^2_{A_{u,l}}<\ep\}}h_l(u)\frac{\ep}{2}\geq\frac{\z^2_1}{2}-2\ep\geq\frac{\z^2_1}{4};
\end{split}
\end{equation*}
similarly, one has
\beqq
 D_*f_i(u)W_u\geq\frac{\z^2_1}{2}-2\ep\geq\frac{\z^2_1}{4}.
\eeqq
So, for Case (i), one has
\beq\label{Ch2f32}
\min\{ D_*f(u)W_u,\  D_*f_i(u)W_u\}\geq\frac{\z^2_1}{4}.
\eeq

Now, for Case (i), set
\beq\label{Ch2f34}
W_{u,i}:=0.
\eeq

Now, we consider Case (ii).

First, we show that $\bar{\chi}_{u,i}u\in\mathcal{A}_{r-3d_r,r+3d_r}(v(\cdot-p_i))\cap f^{\kappa^*}$.

It follows from \eqref{Ch2f3} and $N>N_{\ep}$ that $\|v(\cdot-p_i)\|^2_{|t-p_i|\geq N}\leq\ep/4$. This, together with \eqref{Ch2f10}, yields that
\beqq
\begin{split}
&\|u-v(\cdot-p_i)\|^2_{I_i}=\|u-v(\cdot-p_i)\|^2_{I_i\setminus A_u}+\|u-v(\cdot-p_i)\|^2_{I_i\cap A_u}\\
=&\|u-v(\cdot-p_i)\|^2_{I_i\setminus B_u}+\|u-v(\cdot-p_i)\|^2_{(B_u\setminus A_u)\cap I_i}+\|u-v(\cdot-p_i)\|^2_{I_i\cap A_u}\\
\leq&\|u-v(\cdot-p_i)\|^2_{I_i\setminus B_u}+\|u\|^2_{(B_u\setminus A_u)\cap I_i}+\|v(\cdot-p_i)\|^2_{(B_u\setminus A_u)\cap I_i}
+2\|v(\cdot-p_i)\|_{(B_u\setminus A_u)\cap I_i}\|u\|_{(B_u\setminus A_u)\cap I_i}\\
+&\|u\|^2_{I_i\cap A_u}+2\|u\|_{I_i\cap A_u}\|v(\cdot-p_i)\|_{I_i\cap A_u}+
\|v(\cdot-p_i)\|^2_{I_i\cap A_u}\\
\leq&\|u-v(\cdot-p_i)\|^2_{I_i\setminus B_u}+\frac{\ep}{2}+\frac{\ep}{4}
+2\frac{\ep^{1/2}}{2}\frac{\ep^{1/2}}{\sqrt{2}}+\z_1^2+2\z_1\ep^{1/2}/2+\ep/4\\
\leq&\|u-v(\cdot-p_i)\|^2_{I_i\setminus B_u}+4\ep_1+2\ep_1^{1/2}\z_1+\z^2_1.
\end{split}
\eeqq

Since $\|u-v(\cdot-p_i)\|^2_{I_i}\geq r_{s}^2$, one has
\beq\label{Ch2f22}
\|u-v(\cdot-p_i)\|^2_{I_i\setminus B_u}\geq r_{s}^2-(\z^2_1+2\ep_1^{1/2}\z_1+4\ep_1).
\eeq
By \eqref{Ch2f1} and \eqref{Ch2f2}, one has
\beq \label{Ch2f21}
\z^2_1+2\ep^{1/2}_1\z_1+4\ep_1\leq\z^2_1+\ep_1+\z^2_1+4\ep_1=2\z^2_1+5\ep_1\leq r_{s}^2-(r-3d_r)^2.
\eeq
It follows from \eqref{Ch2f22} and \eqref{Ch2f21} that
\beqq
\|u-v(\cdot-p_i)\|^2_{I_i\setminus B_u}\geq(r-3d_r)^2.
\eeqq
So,
\beqq
\|\bar{\chi}_{u,i}u-v(\cdot-p_i)\|_*^2=\|u-v(\cdot-p_i)\|^2_{I_i\setminus B_u}+\|\bar{\chi}_{u,i}u-v(\cdot-p_i)\|^2_{I_i\cap B_u}+\|v(\cdot-p_i)\|^2_{\ZZ\setminus I_i}
\geq (r-3d_r)^2.
\eeqq

On the other hand, by \eqref{Ch2f24}, \eqref{Ch2f1}, and \eqref{Ch2f2}, one has
\beqq
\begin{split}
&\|\bar{\chi}_{u,i}u-v(\cdot-p_i)\|_*^2=\|\bar{\chi}_{u,i}u-v(\cdot-p_i)\|^2_{I_i}+\|v(\cdot-p_i)\|^2_{\ZZ\setminus I_i}\\
\leq&\|\bar{\chi}_{u,i}u-v(\cdot-p_i)\|^2_{I_i}+\ep\leq(\|\bar{\chi}_{u,i}u-u\|_{I_i}+\|u-v(\cdot-p_i)\|_{I_i})^2+\ep\\
\leq&(\|(1-\bar{\chi}_{u,i})u\|_{I_i}+r_{b})^2+\ep\leq(\|u\|_{I_i\cap A_u}+\|(1-\bar{\chi}_{u,i})u\|_{F_{u,i}}+r_{b})^2+\ep\\
\leq&(\z_1+\ep^{1/2}+r_{b})^2+\ep\leq(\z_1+2\ep^{1/2}+r_{b})^2\leq (r+3d_r)^2,
\end{split}
\eeqq
where $\|(1-\bar{\chi}_{u,i})u\|^2_{F_{u,i}}\leq 2\|u\|^2_{F_{u,i}}$ is used, this can be proved by the similar calculation in \eqref{Ch2f51}.
Hence, $\bar{\chi}_{u,i}u\in\mathcal{A}_{r-3d_r,r+3d_r}(v(\cdot-p_i))$.

Now, we show that $\bar{\chi}_{u,i}u\in f^{\kappa^*}$.

Since $d_r\in(0,r/3)$, one has $r_{b}<r+3d_r<r+r=2r$. For any $u\in\mathcal{B}_{r_{b},b_{R}+\frac{3\de}{2}}^{P}(v)$, it follows from \eqref{Ch2f3} and $\ep<r^2$ that
\beqq
\|u\|_{I_i\cap B_u}\leq\|u-v(\cdot-p_i)\|_{I_i}+\|v(\cdot-p_i)\|_{I_i\cap B_u}<r_{b}+r<2r+r=3r.
\eeqq
This, together with the assumption that $r\in(0,r_0/8)$, implies that $\|u\|_{I_i\cap A_u}<r_0/2$. So, by \eqref{Ch2f4}, one has that
$\sum_{t\in I_i\cap A_u}V(t,u(t))\leq\|u\|^2_{I_i\cap A_u}/8$. Hence,
\beq\label{Ch2f25}
\frac{1}{2}\|u\|^2_{I_i\cap A_u}-\sum_{t\in I_i\cap A_u}V(t,u(t))\geq0.
\eeq
By applying similar discussions in \eqref{Ch2f58}, one can show that
\beq\label{Ch2g5}
\|\bar{\chi}_{u,i}u\|^2_{F_{u,i}}\leq 2\|u\|^2_{F_{u,i}}.
\eeq
This, together with \eqref{Ch2f4}, \eqref{Ch2f10}, and \eqref{Ch2f1}, yields that
\beq\label{Ch2f26}
\sum_{t\in F_{u,i}}V(t,\bar{\chi}_{u,i}u)\leq\frac{\|\bar{\chi}_{u,i}u\|^2_{F_{u,i}}}{8}\leq \frac{\|u\|^2_{F_{u,i}}}{4}\leq \frac{\ep}{4}.
\eeq
Hence, by the definitions of $f_i(u)$ in \eqref{Ch2f23} and $\bar{\chi}_{u,i}$ in \eqref{Ch2f10}, \eqref{Ch2f24}, \eqref{Ch2f25}, \eqref{Ch2g5}, and \eqref{Ch2f26}, one has
\beqq
\begin{split}
&f(\bar{\chi}_{u,i}u)=f_i(\bar{\chi}_{u,i}u)\leq f_i(u)-\bigg(\frac{1}{2}\|u\|^2_{I_i\cap A_u}-\sum_{t\in I_i\cap A_u}V(t,u(t))\bigg)+
\frac{1}{2}\|u\|^2_{F_{u,i}}\\
+&\frac{1}{2}\|\bar{\chi}_{u,i}u\|^2_{F_{u,i}}+\bigg|\sum_{t\in F_{u,i}}V(t,u(t))\bigg|+\bigg|\sum_{t\in F_{u,i}}V(t,\bar{\chi}_{u,i}(t)u(t))\bigg|\\
\leq& f_i(u)+\frac{\ep}{4}+\frac{\ep}{2}+\frac{\ep}{16}+\frac{\ep}{4}\\
\leq &f_i(u)+\frac{\de}{2}\leq b_{R}+3\de/2+\de/2=b_{R}+2\de<\kappa^*,
\end{split}
\eeqq
the last two inequalities are derived from the fact that $u\in\mathcal{B}_{r_{b},b_{R}+\frac{3\de}{2}}^{P}(v)$, \eqref{Ch2f1}, and \eqref{Ch2f27}.

Hence, $\bar{\chi}_{u,i}u\in\mathcal{A}_{r-3d_r,r+3d_r}(v(\cdot-p_i))\cap f^{\kappa^*}$, which implies that $\|D_*f(\bar{\chi}_{u,i}u)\|_*\geq\mu_r$.
Consequently, there is $Q_{u,i}\in l^2$ with $\|Q_{u,i}\|_*\leq1$ such that
\beq\label{Ch2f7}
 D_*f_i(\bar{\chi}_{u,i}u)Q_{u,i}= D_*f(\bar{\chi}_{u,i}u)Q_{u,i}\geq\frac{\mu_r}{2},
\eeq
where \eqref{Ch2deriva} and \eqref{Ch2diff3} are used. By direct calculation, one has
\beqq
\begin{split}
& D_*f_i(\bar{\chi}_{u,i}u)Q_{u,i}-D_*f_i(u)(\bar{\chi}_{u,i}Q_{u,i})\\
=&\bigg(\sum_{t\in I_i}\big(\lan\De(\bar{\chi}_{u,i}(t-1)u(t-1)), \De Q_{u,i}(t-1)\ran+\lan\bar{\chi}_{u,i}(t)u(t), L(t)Q_{u,i}(t)\ran\big)\\
-&\sum_{t\in I_i}\lan V'_x(t,\bar{\chi}_{u,i}(t)u(t)), Q_{u,i}(t)\ran\bigg)-\bigg(\sum_{t\in I_i}\big(\lan\De u(t-1),\De(\bar{\chi}_{u,i}(t-1)Q_{u,i}(t-1))\ran\\
+&\lan u(t),L(t) \bar{\chi}_{u,i}(t)Q_{u,i}(t)\ran\big)-\sum_{t\in I_i}\lan V'_x(t,u(t)),\bar{\chi}_{u,i}(t)Q_{u,i}(t)\ran\bigg)\\
=&\sum_{t\in F_{u,i}}\lan(\De\bar{\chi}_{u,i}(t-1))u(t), \De Q_{u,i}(t-1)\ran-\sum_{t\in F_{u,i}}\lan\De u(t-1),(\De\bar{\chi}_{u,i}(t-1))Q_{u,i}(t))\ran\\
-&\sum_{t\in F_{u,i}}\lan V'_x(t,\bar{\chi}_{u,i}(t)u(t)), Q_{u,i}(t)\ran+\sum_{t\in F_{u,i}}\lan V'_x(t,u(t)),\bar{\chi}_{u,i}(t)Q_{u,i}(t)\ran
\end{split}\
\eeqq
where
\beqq
\begin{split}
&\sum_{t\in I_i}\lan\De(\bar{\chi}_{u,i}(t-1)u(t-1)), \De Q_{u,i}(t-1)\ran\\
=&\sum_{t\in I_i}\lan(\De\bar{\chi}_{u,i}(t-1))u(t), \De Q_{u,i}(t-1)\ran+\sum_{t\in I_i}\lan\bar{\chi}_{u,i}(t-1)(\De u(t-1)), \De Q_{u,i}(t-1)\ran,
\end{split}
\eeqq
\beqq
\begin{split}
&\sum_{t\in I_i}\lan\De u(t-1),\De(\bar{\chi}_{u,i}(t-1)Q_{u,i}(t-1))\ran\\
=&\sum_{t\in I_i}\lan\De u(t-1),(\De\bar{\chi}_{u,i}(t-1))Q_{u,i}(t))\ran+\sum_{t\in I_i}\lan\De u(t-1),\bar{\chi}_{u,i}(t-1)(\De Q_{u,i}(t-1))\ran.
\end{split}
\eeqq
So,
\beq\label{Ch2g8}
\begin{split}
&| D_*f_i(\bar{\chi}_{u,i}u)Q_{u,i}-D_*f_i(u)(\bar{\chi}_{u,i}Q_{u,i})|\\
\leq&\frac{1}{N_0}\sum_{t\in F_{u,i}}|\lan u(t),\De Q_{u,i}(t-1)\ran|+\frac{1}{N_0}\sum_{t\in F_{u,i}}|\lan \De u(t-1),Q_{u,i}(t)\ran|\\
+&\frac{1}{8}\|\bar{\chi}_{u,i}u\|_{F_{u,i}}\|Q_{u,i}\|_{F_{u,i}}+\frac{1}{8}\|u\|_{F_{u,i}}\|\bar{\chi}_{u,i}Q_{u,i}\|_{F_{u,i}}\\
\leq&\frac{2}{N_0L_2}\|u\|_{F_{u,i}}\|Q_{u,i}\|_{F_{u,i}}+\frac{1}{8}\|\bar{\chi}_{u,i}u\|_{F_{u,i}}\|Q_{u,i}\|_{F_{u,i}}+
\frac{1}{8}\|u\|_{F_{u,i}}\|\bar{\chi}_{u,i}Q_{u,i}\|_{F_{u,i}}
\leq\ep^{1/2}.
\end{split}
\eeq

By \eqref{Ch2f1} and \eqref{Ch2f27}, one has
\beq\label{Ch2f8}
| D_*f_i(\bar{\chi}_{u,i}u)Q_{u,i}- D_*f_i(u)(\bar{\chi}_{u,i}Q_{u,i})|\leq\frac{\mu_r}{4}.
\eeq
Since the support of $\bar{\chi}_{u,i}$ is contained in $I_i$, one has
\beq\label{Ch2f9}
|D_*f(\bar{\chi}_{u,i}u)Q_{u,i}- D_*f(u)(\bar{\chi}_{u,i}Q_{u,i})|=|D_*f_i(\bar{\chi}_{u,i}u)Q_{u,i}- D_*f_i(u)(\bar{\chi}_{u,i}Q_{u,i})|\leq\frac{\mu_r}{4}.
\eeq
Combining \eqref{Ch2f7} and \eqref{Ch2f9}, one has if $i\in\mathcal{I}_1(u)$ and $\|u\|_{I_i\cap A_u}<\z_1$, then
\beq\label{Ch2f33}
\min\{ D_*f(u)(\bar{\chi}_{u,i}Q_{u,i}),\ D_*f_i(u)(\bar{\chi}_{u,i}Q_{u,i})\}\geq\frac{\mu_r}{4}.
\eeq

In Case (ii), set
\beq\label{Ch2f37}
W_{u,i}:=\frac{1}{2}\bar{\chi}_{u,i}Q_{u,i}.
\eeq
By \eqref{Ch2f28}, \eqref{Ch2f29}, \eqref{Ch2f1}, and \eqref{Ch2f33}, one has in Case (ii),
\beq\label{Ch2f31}
\min\{ D_*f_i(u)(W_{u,i}+W_u),\ D_*f(u)(W_{u,i}+W_u)\}\geq\frac{\mu_r}{8}-\frac{\ep}{2}\geq\frac{\mu_r}{16}.
\eeq
Set
$$\mu:=\min\bigg\{\frac{\mu_r}{32},\ \frac{\z_1^2}{8}\bigg\},$$
and
\beqq
S_{u,1}:=\left\{
  \begin{array}{ll}
    W_u+\sum_{i\in\mathcal{I}_1(u)}W_{u,i}, & \hbox{if}\ u\in\mathcal{B}_{r_{b},b_{R}+\frac{3\de}{2}}^{P}(v)\setminus\mathcal{B}_{r_{s}}^{P}(v), \\
    0, & \hbox{otherwise.}
  \end{array}
\right.
\eeqq
Thus, for any $u\in\mathcal{B}_{r_{b},b_{R}+\frac{3\de}{2}}^{P}(v)\setminus\mathcal{B}_{r_{s}}^{P}(v)$, by \eqref{Ch2f32}, \eqref{Ch2f34}, and \eqref{Ch2f31}, one has
\beqq
\begin{split}
 D_*f(u)S_{u,1}\geq2\mu;\ D_*f_i(u)S_{u,1}\geq2\mu,\ \forall i\in\mathcal{I}_1(u).\
\end{split}
\eeqq
It follows from \eqref{Ch2f38}, \eqref{Ch2f24}, \eqref{Ch2f35}, \eqref{Ch2f36}, \eqref{Ch2f34}, and \eqref{Ch2f37} that
\beqq
\lan u,S_{u,1}\ran_{M_l}=\lan u,W_u\ran_{M_l}\geq\frac{1}{k+1}\|u\|^2_{M_l},\ 0\leq l\leq k.
\eeqq

{\bf Case (2)}  Now, we consider the case that $u\in\mathcal{B}_{r_{b}}^{P}(v)\cap(\cup^k_{i=1}(f_i)^{b_{R}+\de}_{b_{R}})$.

Set
\beqq
\mathcal{I}^{+}_2(u):=\{i:\ 1\leq i\leq k,\ u\in(f_i)^{b_{R}+\de}_{b_{R}}\},\ \z_2:=\frac{\de^{1/2}}{2},
\eeqq
and choose any $i\in\mathcal{I}^{+}_2(u)$.

Now, we consider two different situations
\beqq
\mbox{(iii)}\ \|u\|_{I_i\cap A_u}\geq\z_2;\ \mbox{(iv)}\ \|u\|_{I_i\cap A_u}<\z_2.
\eeqq
For Case (iii), it follows from \eqref{Ch2f1}, \eqref{Ch2f27}, and Lemma \ref{Ch2diff2} that
\beqq
 D_*f(u)W_u\geq\frac{\z^2_2}{2}-2\ep\geq\frac{1}{4}\z_2^2,\ D_*f_i(u)W_u\geq\frac{\z^2_2}{2}-2\ep\geq\frac{1}{4}\z_2^2.
\eeqq
For $u\in\mathcal{B}_{r_{b}}^{P}(v)\cap(\cup^k_{i=1}(f_i)^{b_{R}+\de}_{b_{R}})$ and $i\in\mathcal{I}^{+}_2(u)$, if $\|u\|_{I_i\cap A_u}\geq\z_2$,
set
\beqq
\bar{W}_{u,i}:=0.
\eeqq

Now, we study Case (iv).

First, we show that $\bar{\chi}_{u,i}u\in(f_i)^{b_{R}+2\de}_{b_{R}-\de}.$

By similar discussions in \eqref{Ch2g5}, \eqref{Ch2f10}, and the definition of $\bar{\chi}_{u,i}$, one has
\beq\label{Ch2f39}
\begin{split}
&\|u\|^2_{I_i}-\|\bar{\chi}_{u,i}u\|^2_{I_i}=\|u\|^2_{I_i\cap A_u}+\|u\|^2_{F_{u,i}}+\|u\|^2_{I_i\setminus B_u}-\|\bar{\chi}_{u,i}u\|^2_{F_{u,i}}-\|u\|^2_{I_i\setminus B_u}\\
\leq&\|u\|^2_{I_i\cap A_u}+\frac{\ep}{2}\leq\z^2_2+\frac{\ep}{2}.
\end{split}
\eeq

By \eqref{Ch2f4}, \eqref{Ch2f1}, \eqref{Ch2f27}, \eqref{Ch2g5}, and $\de\in(0,r^2_0/4)$, one has
\beq\label{Ch2f40}
\begin{split}
&\sum_{t\in I_i}\big(V(t,u(t))-V(t,\bar{\chi}_{u,i}(t)u(t))\big)\\
=&\sum_{t\in I_i\cap A_u}V(t,u(t))+\sum_{t\in F_{u,i}}\big(V(t,u(t))-V(t,\bar{\chi}_{u,i}(t)u(t))\big)\\
\leq&\frac{1}{8}\|u\|^2_{I_i\cap A_u}+\frac{1}{8}\|u\|^2_{F_{u,i}}+\frac{1}{8}\|\bar{\chi}_{u,i}u\|^2_{F_{u,i}}\\
<&\frac{\z_2^2}{8}+\frac{3}{16}\ep.
\end{split}
\eeq

So, it follows from \eqref{Ch2f39}, \eqref{Ch2f40}, and \eqref{Ch2f1} that
\beqq
\begin{split}
&|f_i(u)-f_i(\bar{\chi}_{u,i}u)|=\bigg|\frac{1}{2}(\|u\|^2_{I_i}-\|\bar{\chi}_{u,i}u\|^2_{I_i})-\sum_{t\in I_i}(V(t,u(t))-V(t,\bar{\chi}_{u,i}(t)u(t)))\bigg|\\
\leq&\frac{\z^2_2}{2}+\frac{\ep}{4}+\frac{\z_2^2}{8}+\frac{3}{16}\ep\leq\z_2^2+\frac{1}{2}\ep<\de.
\end{split}
\eeqq

This, together with the assumption that $u\in(f_i)^{b_{R}+\de}_{b_{R}}$, yields that $\bar{\chi}_{u,i}u\in(f_i)^{b_{R}+2\de}_{b_{R}-\de}$.
From the definition of $\bar{\chi}_{u,i}$ in \eqref{Ch2f24}, it follows that $f(\bar{\chi}_{u,i}u)=f_i(\bar{\chi}_{u,i}u)$. So, $\bar{\chi}_{u,i}u\in(f)^{b_{R}+2\de}_{b_{R}-\de}$. By \eqref{Ch2f41}, one has $\|D_*f(\bar{\chi}_{u,i}u)\|_*=\|D_*f_i(\bar{\chi}_{u,i}u)\|_*\geq\nu$.

So, there exists $Q_{u,i}\in l^2$ with $\|Q_{u,i}\|_*\leq1$ such that
\beqq
D_*f_i(\bar{\chi}_{u,i}u)Q_{u,i}= D_*f(\bar{\chi}_{u,i}u)Q_{u,i}\geq\frac{\nu}{2}.
\eeqq
By applying similar discussions in \eqref{Ch2g8} and \eqref{Ch2f1}, one has
\beqq
|D_*f(\bar{\chi}_{u,i}u)Q_{u,i}-D_*f(u)(\bar{\chi}_{u,i}Q_{u,i})|=|D_*f_i(\bar{\chi}_{u,i}u)Q_{u,i}- D_*f_i(u)(\bar{\chi}_{u,i}Q_{u,i})|\leq4\ep^{1/2}\leq\frac{\nu}{4}.
\eeqq
Hence,
\beqq
\min\{D_*f(u)(\bar{\chi}_{u,i}Q_{u,i}),\ D_*f_i(u)(\bar{\chi}_{u,i}Q_{u,i})\}\geq\frac{\nu}{4}.
\eeqq

For $u\in\mathcal{B}_{r_{b}}^{P}(v)\cap(\cup^k_{i=1}(f_i)^{b_{R}+\de}_{b_{R}})$ and $i\in\mathcal{I}^{+}_2(u)$, if $\|u\|_{I_i\cap A_u}<\z_2$, set
\beqq
\bar{W}_{u,i}:=\frac{1}{2}\bar{\chi}_{u,i}Q_{u,i}.
\eeqq

Set
$$\nu^+:=\min\bigg\{\frac{\nu}{16},\ \frac{\z_2^2}{4}\bigg\},$$
and
\beqq
S_{u,2}:=\left\{
  \begin{array}{ll}
    W_u+\sum_{i\in\mathcal{I}^+_2(u)}\bar{W}_{u,i}, & \hbox{if}\ u\in\mathcal{B}_{r_{b}}^{P}(v)\cap(\cup^k_{i=1}(f_i)^{b_{R}+\de}_{b_{R}}), \\
    0, & \hbox{otherwise.}
  \end{array}
\right.
\eeqq
Thus, for any $u\in\mathcal{B}_{r_{b}}^{P}(v)\cap(\cup^k_{i=1}(f_i)^{b_{R}+\de}_{b_{R}})$,
\beqq
\begin{split}
D_*f(u)S_{u,2}\geq\nu^+;\ D_*f_i(u)S_{u,2}\geq\nu^+,\ \forall i\in\mathcal{I}^+_2(u).
\end{split}
\eeqq
By using similar method in Case (1), one has
\beqq
\lan u,S_{u,2}\ran_{M_l}=\lan u,W_u\ran_{M_l}\geq\frac{1}{k+1}\|u\|^2_{M_l},\ 0\leq l\leq k.
\eeqq

{\bf Case (3)} Consider $u\in\mathcal{B}_{r_{b}}^{P}(v)\cap(\cup^k_{i=1}(f_i)^{b_{L}+\de}_{b_{L}})$. The situation can be investigated by similar approach in Case (2).

Set
\beqq
\mathcal{I}^{+}_3(u):=\{i:\ 1\leq i\leq k,\ u\in(f_i)^{b_{L}+\de}_{b_{L}}\},\ \z_3:=\frac{\de^{1/2}}{2},
\eeqq
and choose any $i\in\mathcal{I}^{+}_3(u)$.

Now, we consider two different situations
\beqq
\mbox{(v)}\ \|u\|_{I_i\cap A_u}\geq\z_3;\ \mbox{(vi)}\ \|u\|_{I_i\cap A_u}<\z_3.
\eeqq
It follows from \eqref{Ch2f1}, \eqref{Ch2f27}, and Lemma \ref{Ch2diff2} that
\beqq
D_*f(u)W_u\geq\frac{\z^2_3}{2}-2\ep\geq\frac{1}{4}\z_3^2,\ D_*f_i(u)W_u\geq\frac{\z^2_3}{2}-2\ep\geq\frac{1}{4}\z_3^2.
\eeqq
For $u\in\mathcal{B}_{r_{b}}^{P}(v)\cap(\cup^k_{i=1}(f_i)^{b_{L}+\de}_{b_{L}})$ and $i\in\mathcal{I}^{+}_3(u)$, if $\|u\|_{I_i\cap A_u}\geq\z_3$,
set
\beqq
\bar{W}_{u,i}:=0.
\eeqq

Now, we study Case (vi).

First, we show that $\bar{\chi}_{u,i}u\in(f_i)^{b_{L}+2\de}_{b_{L}-\de}.$

By \eqref{Ch2f10}, \eqref{Ch2g5}, and the definition of $\bar{\chi}_{u,i}$, one has
\beq\label{Ch2g6}
\begin{split}
&\|u\|^2_{I_i}-\|\bar{\chi}_{u,i}u\|^2_{I_i}=\|u\|^2_{I_i\cap A_u}+\|u\|^2_{F_{u,i}}+\|u\|^2_{I_i\setminus B_u}-\|\bar{\chi}_{u,i}u\|^2_{F_{u,i}}-\|u\|^2_{I_i\setminus B_u}\\
\leq&\|u\|^2_{I_i\cap A_u}+\frac{\ep}{2}\leq\z^2_3+\frac{\ep}{2}.
\end{split}
\eeq

By \eqref{Ch2f4}, \eqref{Ch2f1}, \eqref{Ch2f27}, \eqref{Ch2g5}, and $\de\in(0,r^2_0/4)$, one has
\beq\label{Ch2g7}
\begin{split}
&\sum_{t\in I_i}\big(V(t,u(t))-V(t,\bar{\chi}_{u,i}(t)u(t))\big)\\
=&\sum_{t\in I_i\cap A_u}V(t,u(t))+\sum_{t\in F_{u,i}}\big(V(t,u(t))-V(t,\bar{\chi}_{u,i}(t)u(t))\big)\\
\leq&\frac{1}{8}\|u\|^2_{I_i\cap A_u}+\frac{1}{8}\|u\|^2_{F_{u,i}}+\frac{1}{8}\|\bar{\chi}_{u,i}u\|^2_{F_{u,i}}\\
<&\frac{\z_3^2}{8}+\frac{3}{16}\ep.
\end{split}
\eeq

So, it follows from \eqref{Ch2f1}, \eqref{Ch2g6}, and \eqref{Ch2g7} that
\beqq
\begin{split}
&|f_i(u)-f_i(\bar{\chi}_{u,i}u)|=\bigg|\frac{1}{2}(\|u\|^2_{I_i}-\|\bar{\chi}_{u,i}u\|^2_{I_i})-\sum_{t\in I_i}(V(t,u(t))-V(t,\bar{\chi}_{u,i}(t)u(t)))\bigg|\\
\leq&\frac{\z^2_3}{2}+\frac{\ep}{4}+\frac{\z_2^2}{8}+\frac{3}{16}\ep\leq\z_3^2+\frac{\ep}{2}<\de.
\end{split}
\eeqq

This, together with the assumption that $u\in(f_i)^{b_{L}+\de}_{b_{L}}$, yields that $\bar{\chi}_{u,i}u\in(f_i)^{b_{L}+2\de}_{b_{L}-\de}$.
From the definition of $\bar{\chi}_{u,i}$ in \eqref{Ch2f24}, it follows that $f(\bar{\chi}_{u,i}u)=f_i(\bar{\chi}_{u,i}u)$. So, $\bar{\chi}_{u,i}u\in(f)^{b_{L}+2\de}_{b_{L}-\de}$. By \eqref{Ch2f41}, one has $\|D_*f(\bar{\chi}_{u,i}u)\|_*=\|D_*f_i(\bar{\chi}_{u,i}u)\|_*\geq\nu$.

So, there exists $Q_{u,i}\in l^2$ with $\|Q_{u,i}\|_*\leq1$ such that
\beqq
 D_*f_i(\bar{\chi}_{u,i}u)Q_{u,i}=D_*f(\bar{\chi}_{u,i}u)Q_{u,i}\geq\frac{\nu}{2}.
\eeqq
By applying similar discussions in Case (1) and \eqref{Ch2f1}, one has
\beqq
|D_*f(\bar{\chi}_{u,i}u)Q_{u,i}-D_*f(u)(\bar{\chi}_{u,i}Q_{u,i})|=|D_*f_i(\bar{\chi}_{u,i}u)Q_{u,i}- D_*f_i(u)(\bar{\chi}_{u,i}Q_{u,i})|\leq4\ep^{1/2}\leq\frac{\nu}{4}.
\eeqq
Hence,
\beqq
\min\{D_*f(u)(\bar{\chi}_{u,i}Q_{u,i}),\ D_*f_i(u)(\bar{\chi}_{u,i}Q_{u,i})\}\geq\frac{\nu}{4}.
\eeqq

For $u\in\mathcal{B}_{r_{b}}^{P}(v)\cap(\cup^k_{i=1}(f_i)^{b_{L}+\de}_{b_{L}})$ and $i\in\mathcal{I}^{+}_3(u)$, if $\|u\|_{I_i\cap A_u}<\z_3$, set
\beqq
\bar{W}_{u,i}:=\frac{1}{2}\bar{\chi}_{u,i}Q_{u,i}.
\eeqq

Set
$$\nu^-:=\min\bigg\{\frac{\nu}{16},\ \frac{\z_3^2}{4}\bigg\},$$
and
\beqq
S_{u,3}:=\left\{
  \begin{array}{ll}
    W_u+\sum_{i\in\mathcal{I}^+_3(u)}\bar{W}_{u,i}, & \hbox{if}\ u\in\mathcal{B}_{r_{b}}^{P}(v)\cap(\cup^k_{i=1}(f_i)^{b_{L}+\de}_{b_{L}}), \\
    0, & \hbox{otherwise.}
  \end{array}
\right.
\eeqq
Thus, for any $u\in\mathcal{B}_{r_{b}}^{P}(v)\cap(\cup^k_{i=1}(f_i)^{b_{L}+\de}_{b_{L}})$,
\beqq
\begin{split}
D_*f(u)S_{u,3}\geq\nu^-;\ D_*f_i(u)S_{u,3}\geq\nu^-,\ \forall i\in\mathcal{I}^+_3(u).
\end{split}
\eeqq
By using similar method in Case (1), one has
\beqq
\lan u,S_{u,3}\ran_{M_l}=\lan u,W_u\ran_{M_l}\geq\frac{1}{k+1}\|u\|^2_{M_l},\ 0\leq l\leq k.
\eeqq

{\bf Case (4)} Now, we study the situation $u\in\mathcal{B}_{r_{s}}^{P}(v)$.

There are two different situations: (vii) $\max_{0\leq l\leq k}\|u\|_{M_l}\geq 4\ep,$  (viii) $\max_{0\leq l\leq k}\|u\|_{M_l}<4\ep$.

For Case (vii), take $j$, $0\leq j\leq k$, such that $\|u\|^2_{M_j}\geq4\ep$. It follows from Lemma \ref{Ch2diff2} and \eqref{Ch2f28} that
\beqq
D_*f(u)W_u\geq\frac{1}{2}(\|u\|^2_{A_{u,j}}-\ep)-\frac{\ep}{2}\geq\ep.
\eeqq
In Case (vii), set
\beqq
S_{u,4}:=W_u.
\eeqq

Next, it is to show that if $\mathcal{C}\cap\mathcal{B}^P_{r_s}(v)=\emptyset$, then for any $u\in\mathcal{B}^P_{r_s}(v)$ and $u$ is in Case (viii), there is $\mu'_k>0$ such that
$\|D_*f(u)\|_{*}\geq\mu'_k$.\ By contradiction, suppose that there is $\{u_m\}^{\infty}_{m=1}\subset\mathcal{B}^P_{r_s}(v)$ such that $\|D_*f(u_m)\|_{*}\to0$.\
It is easy to obtain that $\mathcal{B}^P_{r_s}(v)$ is a bounded set. By the conclusion of Step 1 in the proof of (g3) in Lemma \ref{Ch2pathg}, one has that $f(\mathcal{B}^P_{r_s}(v))$ is bounded, which implies that
$\{u_m\}^{\infty}_{m=1}$ is a PS sequence. Since $u_m\in\mathcal{B}^P_{r_s}(v)$, one has that $\|u_m-v(\cdot-p_1)\|_{I_1}<r_s$,\ $\|u_m-v(\cdot-p_k)\|_{I_k}<r_s$. This, together with $r_s<2r<r_0/4<D_0/16$, yields that the sequence $\{u_m\}^{\infty}_{m=1}$ satisfies the assumptions of Lemma \ref{Ch2b3}. It follows from Lemma \ref{Ch2b3} that there is a convergent subsequence. Since $\{u_m\}^{\infty}_{m=1}\subset\mathcal{B}^P_{r_s}(v)$, it is evident that the limit points of the PS sequence is also belong to the set $\mathcal{B}^P_{r_s}(v)$. It follows from the assumption $\mathcal{C}\cap\mathcal{B}^P_{r_s}(v)=\emptyset$ that the limit point of this PS sequence is the zero element in $l^2$. Hence, $\|v(-p_i)\|_{I_i}<r_s$, $1\leq i\leq k$, yielding that,
$\|v\|_{\{I_i-p_i\}}<r_s$. By \eqref{Ch2f3},\ $r\in(0,\min\{\frac{r_0}{8},\frac{\sqrt{3}\|v\|_{*}}{4}\})\setminus D^{\kappa^*}$,\ \eqref{Ch2f1} and \eqref{Ch2f27}, one has that $\|v\|_{\{I_i-p_i\}}>r_s$. This is a contradiction.

Hence, if $\mathcal{C}\cap\mathcal{B}^P_{r_s}(v)=\emptyset$, then for any $u\in\mathcal{B}^P_{r_s}(v)$ and $u$ is in Case (viii), there is $Q_u\in l^2$\ with $\|Q_u\|_{*}\leq1$ such that
\beqq
 D_*f(u)Q_u\geq\frac{\bar{\mu}'_k}{2}.
\eeqq
For Case (viii), set
\beqq
S_{u,4}:=Q_u.
\eeqq
If $u\not\in\mathcal{B}^P_{r_s}(v)$, set $S_{u,4}:=0$ and
\beqq
\mu_k:=\min\bigg\{\frac{\ep}{2},\ \frac{\bar{\mu}_k}{4}\bigg\}.
\eeqq
Hence, for any $u\in\mathcal{B}^P_{r_s}(v)$,
\beqq
 D_*f(u)S_{u,4}\geq2\mu_k,
\eeqq
and for any $u$ in Case (4), one has that
\beqq
\lan u,S_{u,4}\ran_{M_l}=\lan u,W_u\ran_{M_l}\geq\frac{1}{1+k}\|u\|^2_{M_l},\ 0\leq l\leq k.
\eeqq

For $u\in l^2$, set
\beqq
S_u:=\sum^4_{i=1}S_{u,i}.
\eeqq

By applying the method of constructing pseudo-gradient vector, we could give the vector field satisfying the requirements of the proposition, please refer to the proof of Theorem 5.2.2 in \cite{Xuan}. We give a sketch of the construction.

In the above construction, the constant $\mu$ could be chosen such that it is independent on
the choice of $u$ in the cases of (II). In the Cases (III) and (IV), there are small
positive constants such that the inequalities in (III) and (IV) are bigger than these positive constants.
Hence, for any $u$, we could define a constant vector on a sufficiently small neighborhood of $u$ such that
the requirements in (I)--(V) are satisfied. For any $u\in l^2$, denote $w_u$ as this constant vector and this neighborhood as $N_{u}$. The family
$\{N_u\}_{u\in l^2}$ consists an open covering of $l^2$. Since $l^2$ is a metric space, it is paracompact.
So, there exists a locally finite refinement $\{N_{u_{\alpha}}\}_{\al\in I}$, where $I$ is the index set.
Consider the following distance function
\beqq
d_{\al}(x):=\mbox{dist}(x,l^2\setminus N_{u_{\al}}),\ \al\in I.
\eeqq
Set
\beqq
d(x):=\sum_{\al\in I}d_{\al}(x),\ x\in l^2.
\eeqq
We can show that $d(x)$ is well-defined and locally Lipschitz continuous function on $l^2$.
For any $u\in l^2$, since the subcover is locally finite, there is a finite number $m$ such that
$d(x)=\sum^{m}_{\al=1}d_{\al}(x)>0$.
Set
\beqq
\mu_{\al}(x):=\frac{d_{\al}(x)}{d(x)},\ \forall x\in l^2,\ \al\in I.
\eeqq
It is evident that
\beqq
\mu_{\al}(x)=0,\ \forall x\in l^2\setminus N_{u_{\al}},\ \al\in I,
\eeqq
and
\beqq
0\leq\mu_{\al}\leq1,\ \sum_{\al\in I}\mu_{\al}(x)=1,\ \forall x\in l^2.
\eeqq
It is evident that $\mu_{\al}(x)$ is locally Lipschitz continuous.
Denote
\beqq
W(x):=\sum_{\al\in I}\mu_{\al}(x)w_{\al_u},\ \forall x\in l^2.
\eeqq
It is easy to show that the vector field $W$ is locally Lipschitz continuous and satisfies the requirements.

By the choice of $r_0$ in \eqref{Ch2f4}, the definition of $W_u$ in \eqref{Ch2f36}, and the construction of the vector field above, it is evident that
$\|W(u)\|_{I_i}\leq 2$,\ $1\leq i\leq k$. This completes the whole proof.
\end{proof}
\bigskip

\section*{Acknowledgment}
We would like to thank Professor Yuming Shi for her encouragement, comments, and providing many useful references.

This research was partially supported by the NNSF of China (Grants 11071143, 11101246, and 11301304).

\end{document}